\date{}
\begin{document}

\centerline{}

\centerline {\Large{\bf  Construction of frame relative to \,$n$-Hilbert space }}

\newcommand{\mvec}[1]{\mbox{\bfseries\itshape #1}}
\centerline{}
\centerline{\textbf{Prasenjit Ghosh}}
\centerline{Department of Pure Mathematics, University of Calcutta,}
\centerline{35, Ballygunge Circular Road, Kolkata, 700019, West Bengal, India}
\centerline{e-mail: prasenjitpuremath@gmail.com}
\centerline{}
\centerline{\textbf{T. K. Samanta}}
\centerline{Department of Mathematics, Uluberia College,}
\centerline{Uluberia, Howrah, 711315,  West Bengal, India}
\centerline{e-mail: mumpu$_{-}$tapas5@yahoo.co.in}

\newtheorem{Theorem}{\quad Theorem}[section]

\newtheorem{definition}[Theorem]{\quad Definition}

\newtheorem{theorem}[Theorem]{\quad Theorem}

\newtheorem{remark}[Theorem]{\quad Remark}

\newtheorem{corollary}[Theorem]{\quad Corollary}

\newtheorem{note}[Theorem]{\quad Note}

\newtheorem{lemma}[Theorem]{\quad Lemma}

\newtheorem{example}[Theorem]{\quad Example}

\newtheorem{result}[Theorem]{\quad Result}
\newtheorem{conclusion}[Theorem]{\quad Conclusion}

\newtheorem{proposition}[Theorem]{\quad Proposition}

\begin{abstract}
\textbf{\emph{In this paper, our aim is to introduce the concept of a frame in n-Hilbert space and describe some of their properties.\,We further discuss tight frame relative to n-Hilbert space.\,At the end,\,we study the relationship between frame and bounded linear operator in n-Hilbert space.}}
\end{abstract}
{\bf Keywords:} \emph{ n-inner product space,\;n-normed space, pseudo-inverse, frame,\\ \smallskip\hspace{2.5cm} tight frame.}

{\bf2010 MSC:} \emph{Primary 42C15; Secondary 46C07, 46C50.}

\section{Introduction}

\smallskip\hspace{.6 cm} In the study of vector spaces, one of the most fundamental concept is that of a basis.\;A basis provides us with an expansion of all vectors in terms of its elements.\;In infinite-dimensional Hilbert space, we are forced to work with infinite series and so depending on  the work on infinite series, different concepts of basis has been established which may contain infinitely many elements namely, Schauder basis, orthonormal basis etc.\;In fact, in a separable Hilbert space every element can be expressed as a infinite linear combination of an orthonormal basis.\;The condition linearly independentness is not being assumed to define such Schauder basis or orthonormal basis but Schauder basis or orthonormal basis automatically becomes linearly independent.\;A frame is also spanning set of a Hilbert space but it is a redundant or linearly dependent system for a Hilbert space.\;So, frame can be considered as a generalization of orthonormal basis.\;In fact, frames play important role in theoretical research of wavelet analysis, signal denoising, feature extraction, robust signal processing etc.\;In 1946, D. Gabor \cite{Gabor} first initiated a technique for rebuilding signals using a family of elementary signals.\;In 1952, Duffin and Schaeffer abstracted Gabor's method to define frame for Hilbert space in their fundamental paper \cite{Duffin}.\;Later on, frame theory was popularized by Daubechies, Grossman, Meyer \cite{Daubechies}.\;The concept of \,$2$-inner product space was first introduced by Diminnie, Gahler and White \cite{Diminnie} in 1970's.\;In 1989, A.\,Misiak \cite{Misiak} developed the generalization of a \,$2$-inner product space for \,$n \,\geq\, 2$.

In this paper, our focus is to study and characterize various properties of frame and tight frame relative to \,$n$-Hilbert space.\;Finally, we shall established that an image of a frame under a bounded linear operator will be a frame if and only if the operator is invertible and give a characterization of frame in terms of its pre-frame operator in \,$n$-Hilbert space. 

Throughout this paper,\;$H$\; will denote separable Hilbert space with inner product \,$\left<\,\cdot \,,\, \cdot\,\right>$\, and \,$l^{\,2}(\,\mathbb{N}\,)$\; denote the space of square summable scalar-valued sequences with index set of natural numbers \,$\mathbb{N}$.

\section{Preliminaries}

\begin{theorem}\cite{Christensen}\label{th1}
Let \,$H_{\,1},\, H_{\,2}$\; be two Hilbert spaces and \;$U \,:\, H_{\,1} \,\to\, H_{\,2}$\; be a bounded linear operator with closed range \;$\mathcal{R}_{\,U}$.\;Then there exists a bounded linear operator \,$U^{\dagger} \,:\, H_{\,2} \,\to\, H_{\,1}$\, such that \,$U\,U^{\dagger}\,x \,=\, x\; \;\forall\; x \,\in\, \mathcal{R}_{\,U}$.
\end{theorem}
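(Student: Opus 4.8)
The plan is to construct $U^{\dagger}$ as the inverse of a suitable restriction of $U$, precomposed with an orthogonal projection. First I would look at the kernel $\mathcal{N}_{\,U} \,=\, \ker U$, which is a closed subspace of $H_{\,1}$ because $U$ is bounded, and form the orthogonal decomposition $H_{\,1} \,=\, \mathcal{N}_{\,U} \,\oplus\, \mathcal{N}_{\,U}^{\,\perp}$. Restricting $U$ to $\mathcal{N}_{\,U}^{\,\perp}$ yields a map $\widetilde{U} \,:\, \mathcal{N}_{\,U}^{\,\perp} \,\to\, \mathcal{R}_{\,U}$ which I claim is a linear bijection: it is injective, since any vector of $\mathcal{N}_{\,U}^{\,\perp}$ lying in the kernel must be zero, and it is surjective onto $\mathcal{R}_{\,U}$, because every $U\,x$ equals $U\,x^{\,\prime}$, where $x^{\,\prime}$ is the component of $x$ in $\mathcal{N}_{\,U}^{\,\perp}$.

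The key step is to show that the inverse $\widetilde{U}^{\,-1} \,:\, \mathcal{R}_{\,U} \,\to\, \mathcal{N}_{\,U}^{\,\perp}$ is bounded. Here the hypothesis that $\mathcal{R}_{\,U}$ is closed does the real work: a closed subspace of a Hilbert space is itself complete, hence a Hilbert space, so $\widetilde{U}$ is a bounded linear bijection between two Hilbert spaces, and the Bounded Inverse Theorem (a consequence of the Open Mapping Theorem) guarantees that $\widetilde{U}^{\,-1}$ is bounded. This is the one place where completeness and closedness are essential; without closedness of the range the inverse could fail to be continuous. I expect this to be the main obstacle, everything else being routine bookkeeping about orthogonal complements.

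Finally I would assemble $U^{\dagger}$. Let \,$P \,:\, H_{\,2} \,\to\, \mathcal{R}_{\,U}$\, denote the orthogonal projection onto the closed subspace \,$\mathcal{R}_{\,U}$, and define \,$U^{\dagger} \,=\, \widetilde{U}^{\,-1}\, P$. As a composition of the bounded projection \,$P$\, with the bounded operator \,$\widetilde{U}^{\,-1}$, the map \,$U^{\dagger} \,:\, H_{\,2} \,\to\, H_{\,1}$\, is bounded and linear. To verify the asserted identity, take any \,$x \,\in\, \mathcal{R}_{\,U}$; then \,$P\,x \,=\, x$, so \,$U^{\dagger}\,x \,=\, \widetilde{U}^{\,-1}\,x \,\in\, \mathcal{N}_{\,U}^{\,\perp}$, and applying \,$U$\, gives \,$U\,U^{\dagger}\,x \,=\, \widetilde{U}\,\widetilde{U}^{\,-1}\,x \,=\, x$. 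This establishes \,$U\,U^{\dagger}\,x \,=\, x$\, for every \,$x \,\in\, \mathcal{R}_{\,U}$, completing the construction.
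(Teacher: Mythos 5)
Your construction is correct: restricting \,$U$\, to \,$(\ker U)^{\perp}$, inverting the resulting bijection onto \,$\mathcal{R}_{\,U}$\, via the Bounded Inverse Theorem (legitimate precisely because the closed range is complete), and precomposing with the orthogonal projection onto \,$\mathcal{R}_{\,U}$\, is the standard pseudo-inverse construction. The paper itself gives no proof of this statement — it is imported from \cite{Christensen} — and your argument coincides with the proof given in that reference, so there is nothing to flag.
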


\begin{note}
The operator \,$U^{\dagger}$\, defined in Theorem (\ref{th1}), is called the pseudo-inverse of \,$U$.
\end{note}

\begin{theorem}\cite{Kreyzig}\label{th1.051}
The set \,$\mathcal{S}\,(\,H\,)$\; of all self-adjoint operators on \,$H$\; is a partially ordered set with respect to the partial order \,$\leq$\, which is defined as for \,$T,\,S \,\in\, \mathcal{S}\,(\,H\,)$ 
\[T \,\leq\, S \,\Leftrightarrow\, \left<\,T\,f \,,\, f\,\right> \,\leq\, \left<\,S\,f \,,\, f\,\right>\; \;\forall\; f \,\in\, H.\] 
\end{theorem}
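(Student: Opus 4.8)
The plan is to verify the three defining axioms of a partial order---reflexivity, transitivity, and antisymmetry---for the relation \,$\leq$\, on \,$\mathcal{S}\,(\,H\,)$.\;First I would record that the relation is well-defined: since each \,$T \,\in\, \mathcal{S}\,(\,H\,)$\, is self-adjoint, one has \,$\left<\,T\,f \,,\, f\,\right> \,=\, \overline{\left<\,T\,f \,,\, f\,\right>}$, so \,$\left<\,T\,f \,,\, f\,\right>$\, is real for every \,$f \,\in\, H$, and comparing it with \,$\left<\,S\,f \,,\, f\,\right>$\, through the usual order on \,$\mathbb{R}$\, is meaningful.

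Reflexivity and transitivity then descend directly from the corresponding properties of the order on \,$\mathbb{R}$.\;For reflexivity, \,$\left<\,T\,f \,,\, f\,\right> \,\leq\, \left<\,T\,f \,,\, f\,\right>$\, holds for every \,$f$, so \,$T \,\leq\, T$.\;For transitivity, if \,$T \,\leq\, S$\, and \,$S \,\leq\, R$, then for each fixed \,$f$\, the chain \,$\left<\,T\,f \,,\, f\,\right> \,\leq\, \left<\,S\,f \,,\, f\,\right> \,\leq\, \left<\,R\,f \,,\, f\,\right>$\, of real inequalities gives \,$\left<\,T\,f \,,\, f\,\right> \,\leq\, \left<\,R\,f \,,\, f\,\right>$, hence \,$T \,\leq\, R$.

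The substantial step is antisymmetry.\;Suppose \,$T \,\leq\, S$\, and \,$S \,\leq\, T$.\;Then \,$\left<\,T\,f \,,\, f\,\right> \,=\, \left<\,S\,f \,,\, f\,\right>$\, for every \,$f \,\in\, H$, so the self-adjoint operator \,$A \,:=\, S \,-\, T$\, satisfies \,$\left<\,A\,f \,,\, f\,\right> \,=\, 0$\, for all \,$f \,\in\, H$, and the goal becomes to deduce \,$A \,=\, 0$.\;I would invoke the polarization identity, which expresses \,$\left<\,A\,f \,,\, g\,\right>$\, as a linear combination of the four diagonal terms \,$\left<\,A\,(\,f \,+\, i^{\,k}\,g\,) \,,\, f \,+\, i^{\,k}\,g\,\right>$, \,$k \,=\, 0,\,1,\,2,\,3$, each of which vanishes by hypothesis.\;This forces \,$\left<\,A\,f \,,\, g\,\right> \,=\, 0$\, for all \,$f,\,g \,\in\, H$; taking \,$g \,=\, A\,f$\, yields \,$\left<\,A\,f \,,\, A\,f\,\right> \,=\, 0$, so \,$A\,f \,=\, 0$\, for every \,$f$, whence \,$A \,=\, 0$\, and therefore \,$S \,=\, T$.

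The main---indeed the only genuine---obstacle is the passage from ``\,$\left<\,A\,f \,,\, f\,\right> \,=\, 0$\, for all \,$f$\,'' to ``\,$A \,=\, 0$\,''.\;Over a complex Hilbert space this is clean via the polarization identity as above.\;Over a real Hilbert space that identity is unavailable, and one must instead use self-adjointness explicitly: expanding \,$\left<\,A\,(\,f \,+\, g\,) \,,\, f \,+\, g\,\right> \,=\, 0$\, and cancelling the two vanishing diagonal terms leaves \,$2\,\left<\,A\,f \,,\, g\,\right> \,=\, 0$, again giving \,$A \,=\, 0$.\;Since the frame-theoretic setting of the paper is over a complex Hilbert space, I would present the complex polarization version.
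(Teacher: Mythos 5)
Your proof is correct, but there is nothing in the paper to compare it against: the statement appears in the Preliminaries as a result quoted from Kreyzig's book \cite{Kreyzig}, and the paper supplies no proof of it at all. Your argument is the standard textbook one, and it is complete. The preliminary observation that \,$\left<\,T\,f \,,\, f\,\right>$\, is real for self-adjoint \,$T$\, is exactly what makes the relation well-defined, and reflexivity and transitivity do reduce trivially to the order on \,$\mathbb{R}$. The only substantive step is antisymmetry, i.e.\ that a self-adjoint \,$A$\, with \,$\left<\,A\,f \,,\, f\,\right> \,=\, 0$\, for all \,$f$\, must vanish, and you handle it correctly: over \,$\mathbb{C}$\, polarization gives \,$\left<\,A\,f \,,\, g\,\right> \,=\, 0$\, for all \,$f,\, g$\, (in fact this step needs no self-adjointness whatsoever), and taking \,$g \,=\, A\,f$\, finishes. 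Your remark about the real case is also accurate and worth keeping: the diagonal hypothesis alone fails for non-self-adjoint operators (rotation by a right angle in \,$\mathbb{R}^{\,2}$\, satisfies \,$\left<\,A\,f \,,\, f\,\right> \,=\, 0$\, identically yet \,$A \,\neq\, 0$), so the explicit use of self-adjointness in expanding \,$\left<\,A\,(\,f \,+\, g\,) \,,\, f \,+\, g\,\right>$\, is precisely the needed repair there. Presenting the complex version is consistent with the paper's setting.
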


\begin{definition}\cite{Kreyzig}
A self-adjoint operator \,$U \,:\, H \,\to\, H$\, is called positive if \,$\left<\,U\,x \,,\,  x\,\right> \,\geq\, 0$\, for all \,$x \,\in\, H$.\;In notation, we can write \,$U \,\geq\, 0$.\;A self-adjoint operator \,$V \,:\, H \,\to\, H$\, is called a square root of \,$U$\, if \,$V^{\,2} \,=\, U$.\;If, in addition \,$V \,\geq\, 0$, then \,$V$\, is called positive square root of \,$U$\, and is denoted by \,$V \,=\, U^{\,\dfrac{1}{2}}$. 
\end{definition}

\begin{theorem}\cite{Kreyzig}\label{th1.05}
The positive square root \,$V \,:\, H \,\to\, H$\, of an arbitrary positive self-adjoint operator \,$U \,:\, H \,\to\, H$\, exists and is unique.\;Further, the operator \,$V$\, commutes with every bounded linear operator on \,$H$\, which commutes with \,$U$.
\end{theorem}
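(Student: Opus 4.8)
The plan is to prove existence by an explicit monotone iteration, to read off the commutativity property as a by-product of that construction, and to settle uniqueness by a short algebraic argument exploiting positivity. First I would normalize: since \,$U \,\geq\, 0$\, is bounded, after dividing by \,$\|\,U\,\|$\, (and rescaling the eventual root by \,$\|\,U\,\|^{\,1/2}$) I may assume \,$0 \,\leq\, U \,\leq\, I$. Writing \,$A \,=\, I \,-\, U$\, so that \,$0 \,\leq\, A \,\leq\, I$, the problem of finding a self-adjoint \,$V$\, with \,$V^{\,2} \,=\, U$\, is equivalent to finding a self-adjoint \,$B$\, with \,$(\,I \,-\, B\,)^{\,2} \,=\, I \,-\, A$, that is, \,$B \,=\, \tfrac{1}{2}\,(\,A \,+\, B^{\,2}\,)$, and then setting \,$V \,=\, I \,-\, B$.

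Next I would set up the iteration \,$B_{\,0} \,=\, 0$\, and \,$B_{\,n + 1} \,=\, \tfrac{1}{2}\,(\,A \,+\, B_{\,n}^{\,2}\,)$, and establish by induction three facts: every \,$B_{\,n}$\, is a polynomial in \,$A$\, with non-negative coefficients; the sequence is bounded, \,$0 \,\leq\, B_{\,n} \,\leq\, I$; and it is monotone increasing, \,$B_{\,n} \,\leq\, B_{\,n + 1}$. The monotonicity step is the delicate one: from \,$B_{\,n + 1} \,-\, B_{\,n} \,=\, \tfrac{1}{2}\,(\,B_{\,n} \,-\, B_{\,n - 1}\,)\,(\,B_{\,n} \,+\, B_{\,n - 1}\,)$\, one argues, using that \,$B_{\,n} \,-\, B_{\,n - 1}$\, and \,$B_{\,n} \,+\, B_{\,n - 1}$\, are commuting positive polynomials in \,$A$\, and that the product of two commuting positive operators is again positive, that the left-hand side is positive. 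Then I would invoke the monotone convergence principle for self-adjoint operators: an increasing sequence bounded above converges in the strong operator topology to a bounded self-adjoint \,$B$\, with \,$0 \,\leq\, B \,\leq\, I$. Passing to the limit in the recursion gives \,$B \,=\, \tfrac{1}{2}\,(\,A \,+\, B^{\,2}\,)$, whence \,$V \,=\, I \,-\, B$\, satisfies \,$V^{\,2} \,=\, U$, and \,$V \,\geq\, 0$\, follows from \,$B \,\leq\, I$.

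The commutativity assertion then comes for free: since each \,$B_{\,n}$\, is a polynomial in \,$A \,=\, I \,-\, U$, each \,$B_{\,n}$\, commutes with any bounded operator that commutes with \,$U$, and this property is preserved under strong limits, so \,$B$\, and hence \,$V \,=\, I \,-\, B$\, commute with every bounded operator commuting with \,$U$. For uniqueness, suppose \,$W \,\geq\, 0$\, also satisfies \,$W^{\,2} \,=\, U$. Then \,$W$\, commutes with \,$U \,=\, W^{\,2}$, so by the commutation property just proved \,$W$\, commutes with \,$V$. Put \,$T \,=\, V \,-\, W$. Since \,$V$\, and \,$W$\, commute, \,$T\,(\,V \,+\, W\,)\,T \,=\, (\,V^{\,2} \,-\, W^{\,2}\,)\,T \,=\, 0$; as \,$T\,V\,T \,\geq\, 0$\, and \,$T\,W\,T \,\geq\, 0$\, are two positive operators whose sum vanishes, each is zero, and subtracting gives \,$T^{\,3} \,=\, 0$. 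For the self-adjoint \,$T$\, this yields \,$\left\|\,T^{\,2}\,\right\|^{\,2} \,=\, \left\|\,T^{\,4}\,\right\| \,=\, 0$, so \,$T^{\,2} \,=\, 0$\, and then \,$T \,=\, 0$, i.e. \,$V \,=\, W$.

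The main obstacle I expect is the monotonicity-and-boundedness induction together with the strong-convergence step, since this is where the operator-theoretic care is concentrated; once the construction and its commutation property are in hand, both the commutativity claim and the uniqueness half are short.
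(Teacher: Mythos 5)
The paper offers no proof of this statement: it is quoted verbatim from the cited reference \cite{Kreyzig} as a preliminary fact, so there is nothing in the paper to compare against. Your proposal is correct, and it is in essence the classical argument from that very reference (and Riesz--Nagy): reduce to \,$0 \,\leq\, U \,\leq\, I$, run the iteration \,$B_{\,n+1} \,=\, \tfrac{1}{2}\,(\,A \,+\, B_{\,n}^{\,2}\,)$\, with \,$A \,=\, I \,-\, U$, pass to the strong limit, and get commutativity and uniqueness as corollaries. One small caution: the lemma that a product of commuting positive operators is positive is often itself proved using square roots, which would be circular here; but since your induction already shows each difference \,$B_{\,n} \,-\, B_{\,n-1}$\, is a polynomial in \,$A$\, with non-negative coefficients, positivity of these differences follows directly from \,$A^{\,k} \,\geq\, 0$, and no appeal to that lemma is needed.
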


\begin{definition}\cite{Christensen}
A sequence \,$\left\{\,f_{\,i}\,\right\}_{i \,=\, 1}^{\infty} \,\subseteq\, H$\, is said to be a frame for \,$H$\, if there exist positive constants \,$A,\, B$\, such that
\[ A\; \|\,f\,\|^{\,2} \,\leq\, \;\sum\limits_{i \,=\, 1}^{\infty}\;  \left|\ \left <\,f \,,\, f_{\,i} \, \right >\,\right|^{\,2} \,\leq\, B \;\|\,f\,\|^{\,2}\; \;\forall\; f \,\in\, H.\]
The constants \,$A$\, and \,$B$\, are called frame bounds.\,If the collection \;$\left\{\,f_{\,i}\,\right\}_{i \,=\, 1}^{\infty}$\; satisfies
\[\sum\limits_{i \,=\, 1}^{\infty}\;  \left|\,\left <\,f \,,\, f_{\,i} \, \right >\,\right|^{\,2} \,\leq\, B \;\|\,f\,\|^{\,2}\; \;\forall\; f \,\in\, H \]
then it is called a Bessel sequence with bound \,$B$.
\end{definition}

\begin{theorem}\cite{Christensen}\label{th1.1}
Let \,$\left\{\,f_{\,i}\,\right\}_{i \,=\, 1}^{\infty}$\; be a sequence in \,$H$\; and \,$B \,>\, 0$\; be given.\;Then \,$\left\{\,f_{\,i}\,\right\}_{i \,=\, 1}^{\infty}$\, is a Bessel sequence with Bessel bound \,$B$\, if and only if the operator defined by \;$ T \,:\, l^{\,2}\,(\,\mathbb{N}\,) \,\to\, H,\; T\,\{\,c_{i}\,\} \,=\, \sum\limits_{i \,=\, 1}^{\infty} \;c_{\,i}\,f_{\,i}$\;
is bounded and \;$\|\,T\,\| \,\leq\, \sqrt{B}$.
\end{theorem}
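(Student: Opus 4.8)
The plan is to prove both implications by exploiting the duality between the synthesis operator \,$T$\, and the analysis map \,$f \,\mapsto\, \{\,\langle\,f \,,\, f_{\,i}\,\rangle\,\}$, which turn out to be adjoints of one another.\;The key facts I would lean on are the identity \,$\|\,T\,\| \,=\, \|\,T^{\,\ast}\,\|$\, and the observation that the Bessel inequality is precisely the assertion that the analysis operator is bounded with norm at most \,$\sqrt{B}$.

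First I would dispatch the converse direction.\;Assuming \,$T$\, is well-defined and bounded with \,$\|\,T\,\| \,\leq\, \sqrt{B}$, I would compute its adjoint \,$T^{\,\ast} \,:\, H \,\to\, l^{\,2}(\,\mathbb{N}\,)$.\;Pairing \,$T\{\,c_{\,i}\,\}$\, with an arbitrary \,$f \,\in\, H$\, and moving the summation outside the inner product gives \,$\langle\,T\{\,c_{\,i}\,\} \,,\, f\,\rangle \,=\, \sum_{i} c_{\,i}\,\langle\,f_{\,i} \,,\, f\,\rangle$, from which one reads off \,$T^{\,\ast} f \,=\, \{\,\langle\,f \,,\, f_{\,i}\,\rangle\,\}_{i}$.\;Consequently \,$\sum_{i} |\,\langle\,f \,,\, f_{\,i}\,\rangle\,|^{\,2} \,=\, \|\,T^{\,\ast} f\,\|^{\,2} \,\leq\, \|\,T^{\,\ast}\,\|^{\,2}\,\|\,f\,\|^{\,2} \,=\, \|\,T\,\|^{\,2}\,\|\,f\,\|^{\,2} \,\leq\, B\,\|\,f\,\|^{\,2}$, which is exactly the Bessel condition.

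For the forward direction the substantive work is to show that \,$T\{\,c_{\,i}\,\} \,=\, \sum_{i} c_{\,i}\,f_{\,i}$\, even converges.\;I would first restrict to finitely supported sequences, where the sum is finite and hence unambiguous.\;For such a sequence, writing the norm of the partial sum as a supremum of inner products against unit vectors and applying the Cauchy--Schwarz inequality in \,$l^{\,2}$\, yields \,$\|\,\sum_{i=1}^{n} c_{\,i}\,f_{\,i}\,\| \,\leq\, \sqrt{B}\,\big(\,\sum_{i=1}^{n} |\,c_{\,i}\,|^{\,2}\,\big)^{1/2}$, the factor \,$\sqrt{B}$\, entering precisely through the Bessel bound applied to \,$\sum_{i} |\,\langle\,f_{\,i} \,,\, f\,\rangle\,|^{\,2}$.

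Armed with this uniform estimate, I would establish convergence for a general \,$\{\,c_{\,i}\,\} \,\in\, l^{\,2}(\,\mathbb{N}\,)$\, by a Cauchy-sequence argument: the same bound applied to the tail \,$\sum_{i=n+1}^{m} c_{\,i}\,f_{\,i}$\, shows the partial sums form a Cauchy sequence in \,$H$, which converges by completeness.\;Passing to the limit in the inequality gives \,$\|\,T\{\,c_{\,i}\,\}\,\| \,\leq\, \sqrt{B}\,\|\,\{\,c_{\,i}\,\}\,\|_{l^{\,2}}$, so \,$T$\, is bounded with \,$\|\,T\,\| \,\leq\, \sqrt{B}$.\;The main obstacle, and the only genuinely analytic point, is this passage from a bound on finite sums to well-definedness on all of \,$l^{\,2}(\,\mathbb{N}\,)$; the remainder of the argument is a formal manipulation of adjoints.
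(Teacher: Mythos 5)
Your proof is correct, and there is nothing in the paper to compare it against directly: the paper quotes this theorem from Christensen's book as a preliminary, without proof. The closest in-paper material is Theorem~\ref{th2.1}, where the authors prove the \,$n$-Hilbert analogue, and your forward direction is exactly the technique used there: express the norm of a tail \,$\sum_{i=n+1}^{m} c_{\,i}\,f_{\,i}$\, as a supremum of inner products against unit vectors, apply the Cauchy--Schwarz inequality in \,$l^{\,2}\,(\,\mathbb{N}\,)$\, together with the Bessel bound to obtain the uniform estimate \,$\sqrt{B}\,\left(\,\sum_{i=n+1}^{m} |\,c_{\,i}\,|^{\,2}\,\right)^{1/2}$, conclude that the partial sums are Cauchy, and pass to the limit to get \,$\|\,T\,\| \,\leq\, \sqrt{B}$. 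Your converse, via the adjoint identity \,$T^{\,\ast}f \,=\, \left\{\,\left<\,f \,,\, f_{\,i}\,\right>\,\right\}_{i\,=\,1}^{\infty}$\, and \,$\|\,T^{\,\ast}\,\| \,=\, \|\,T\,\|$, is also sound, and is in one respect tidier than the corresponding step in Theorem~\ref{th2.1}: you obtain \,$\left\{\,\left<\,f \,,\, f_{\,i}\,\right>\,\right\}_{i\,=\,1}^{\infty} \,\in\, l^{\,2}\,(\,\mathbb{N}\,)$\, for free, because the adjoint of a bounded operator into \,$l^{\,2}\,(\,\mathbb{N}\,)$\, automatically takes values there, whereas the paper appeals to an external result of Heuser (convergence of \,$\sum_{i} \overline{c_{\,i}}\,a_{\,i}$\, for every \,$\{\,c_{\,i}\,\} \,\in\, l^{\,2}\,(\,\mathbb{N}\,)$\, forces \,$\{\,a_{\,i}\,\} \,\in\, l^{\,2}\,(\,\mathbb{N}\,)$) to reach the same conclusion. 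The only point worth making explicit when you write this up is the identification of \,$T^{\,\ast}$: test the equality \,$\left<\,T\,\{\,c_{i}\,\} \,,\, f\,\right> \,=\, \left<\,\{\,c_{i}\,\} \,,\, T^{\,\ast}f\,\right>_{l^{\,2}}$\, against the standard basis vectors of \,$l^{\,2}\,(\,\mathbb{N}\,)$\, to read off the components of \,$T^{\,\ast}f$; with that spelled out, the argument is complete.
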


\begin{definition}\cite{Christensen}
Let \,$\left\{\,f_{\,i}\,\right\}_{i \,=\, 1}^{\infty}$\; be a frame for \,$H$.\;Then the bounded linear operator \,$ T \,:\, l^{\,2}\,(\,\mathbb{N}\,) \,\to\, H$, defined by \,$T\,\{\,c_{i}\,\} \,=\, \sum\limits_{i \,=\, 1}^{\infty} \;c_{\,i}\,f_{\,i}$, is called  pre-frame operator and its adjoint operator \,$T\,^{\,\ast} \,:\, H \,\to\, l^{\,2}\,(\,\mathbb{N}\,)$, given by \;$T^{\,\ast}\,f \,=\, \left \{\, \left <\,f \,,\, f_{i} \,\right >\,\right \}_{i \,=\, 1}^{\infty}$\; is called the analysis operator.\;The operator \,$S \,:\, H \,\to\, H$\; defined by \,$S\,f \,=\, T\,T^{\,\ast}\,f \,=\, \sum\limits^{\infty}_{i \,=\, 1}\; \left <\,f \,,\, f_{\,i} \, \right >\,f_{\,i}$\; is called the frame operator.
\end{definition}

The frame operator \,$S$\; is bounded, positive, self-adjoint and invertible \cite{Christensen}.

\begin{definition}\cite{Mashadi}
A \,$n$-norm on a linear space \,$X$\, (\,over the field \,$\mathbb{K}$\, of real or complex numbers\,) is a function
\[\left(\,x_{\,1} \,,\, x_{\,2} \,,\, \cdots \,,\, x_{\,n}\,\right) \,\longmapsto\, \left\|\,x_{\,1} \,,\, x_{\,2} \,,\, \cdots \,,\, x_{\,n}\,\right\|,\; x_{\,1},\, x_{\,2},\, \cdots,\, x_{\,n} \,\in\, X\]from \,$X^{\,n}$\, to the set \,$\mathbb{R}$\, of all real numbers such that for every \,$x_{\,1},\, x_{\,2},\, \cdots,\, x_{\,n} \,\in\, X$\, and \,$\alpha \,\in\, \mathbb{K}$,
\begin{itemize}
\item[(I)]\;\; $\left\|\,x_{\,1} \,,\, x_{\,2} \,,\, \cdots \,,\, x_{\,n}\,\right\| \,=\, 0$\; if and only if \,$x_{\,1},\, \cdots,\, x_{\,n}$\; are linearly dependent,
\item[(II)]\;\;\; $\left\|\,x_{\,1} \,,\, x_{\,2} \,,\, \cdots \,,\, x_{\,n}\,\right\|$\; is invariant under permutations of \,$x_{\,1},\, x_{\,2},\, \cdots,\, x_{\,n}$,
\item[(III)]\;\;\; $\left\|\,\alpha\,x_{\,1} \,,\, x_{\,2} \,,\, \cdots \,,\, x_{\,n}\,\right\| \,=\, |\,\alpha\,|\, \left\|\,x_{\,1} \,,\, x_{\,2} \,,\, \cdots \,,\, x_{\,n}\,\right\|$,
\item[(IV)]\;\; $\left\|\,x \,+\, y \,,\, x_{\,2} \,,\, \cdots \,,\, x_{\,n}\,\right\| \,\leq\, \left\|\,x \,,\, x_{\,2} \,,\, \cdots \,,\, x_{\,n}\,\right\| \,+\,  \left\|\,y \,,\, x_{\,2} \,,\, \cdots \,,\, x_{\,n}\,\right\|$.
\end{itemize}
A linear space \,$X$, together with a n-norm \,$\left\|\,\cdot \,,\, \cdots \,,\, \cdot \,\right\|$, is called a linear n-normed space. 
\end{definition}

\begin{definition}\cite{Misiak}
Let \,$n \,\in\, \mathbb{N}$\; and \,$X$\, be a linear space of dimension greater than or equal to \,$n$\; over the field \,$\mathbb{K}$, where \,$\mathbb{K}$\, is the real or complex numbers field.\;A n-inner product on \,$X$\, is a map 
\[\left(\,x \,,\, y \,,\, x_{\,2} \,,\, \cdots \,,\, x_{\,n}\,\right) \,\longmapsto\, \left<\,x \,,\, y \;|\; x_{\,2} \,,\, \cdots \,,\, x_{\,n} \,\right>,\; x,\, y,\, x_{\,2},\, \cdots,\, x_{\,n} \,\in\, X\]from \,$X^{n \,+\, 1}$\, to the set \,$\mathbb{K}$\, such that for every \,$x,\, y,\, x_{\,1},\, x_{\,2},\, \cdots,\, x_{\,n} \,\in\, X$\, and \,$\alpha \,\in\, \mathbb{K}$,
\begin{itemize}
\item[(I)]\;\; $\left<\,x_{\,1} \,,\, x_{\,1} \;|\; x_{\,2} \,,\, \cdots \,,\, x_{\,n} \,\right> \;\geq\;  0$\; and \;$\left<\,x_{\,1} \,,\, x_{\,1} \;|\; x_{\,2} \,,\, \cdots \,,\, x_{\,n} \,\right> \;=\;  0$\; if and only if \;$x_{\,1},\, x_{\,2},\, \cdots,\, x_{\,n}$\; are linearly dependent,
\item[(II)]\;\; $\left<\,x \,,\, y \;|\; x_{\,2} \,,\, \cdots \,,\, x_{\,n} \,\right> \;=\; \left<\,x \,,\, y \;|\; x_{\,i_{\,2}} \,,\, \cdots \,,\, x_{\,i_{\,n}} \,\right> $\; for every permutations \\$\left(\, i_{\,2} \,,\, \cdots \,,\, i_{\,n} \,\right)$\; of \;$\left(\, 2 \,,\, \cdots \,,\, n \,\right)$,
\item[(III)]\;\; $\left<\,x \,,\, y \;|\; x_{\,2} \,,\, \cdots \,,\, x_{\,n} \,\right> \;=\; \overline{\left<\,y \,,\, x \;|\; x_{\,2} \,,\, \cdots \,,\, x_{\,n} \,\right> }$,
\item[(IV)]\;\; $\left<\,\alpha\,x \,,\, y \;|\; x_{\,2} \,,\, \cdots \,,\, x_{\,n} \,\right> \;=\; \alpha \,\left<\,x \,,\, y \;|\; x_{\,2} \,,\, \cdots \,,\, x_{\,n} \,\right> $,
\item[(V)]\;\; $\left<\,x \,+\, y \,,\, z \;|\; x_{\,2} \,,\, \cdots \,,\, x_{\,n} \,\right> \;=\; \left<\,x \,,\, z \;|\; x_{\,2} \,,\, \cdots \,,\, x_{\,n} \,\right> \,+\,  \left<\,y \,,\, z \;|\; x_{\,2} \,,\, \cdots \,,\, x_{\,n} \,\right>$.
\end{itemize}
A linear space \,$X$\, together with n-inner product \,$\left<\,\cdot \,,\, \cdot \,|\, \cdot \,,\, \cdots \,,\, \cdot\,\right>$\, is called n-inner product space.
\end{definition}

\begin{theorem}\cite{Gunawan}
For \,$n$-inner product space \,$\left(\,X \,,\, \left<\,\cdot \,,\, \cdot \,|\, \cdot \,,\, \cdots \,,\, \cdot\,\right>\,\right)$, 
\[\left|\,\left<\,x \,,\, y \,|\, x_{\,2} \,,\,  \cdots \,,\, x_{\,n}\,\right>\,\right| \,\leq\, \left\|\,x \,,\, x_{\,2} \,,\, \cdots \,,\, x_{\,n}\,\right\|\, \left\|\,y \,,\, x_{\,2} \,,\, \cdots \,,\, x_{\,n}\,\right\|\]
hold for all \,$x,\, y,\, x_{\,2},\, \cdots,\, x_{\,n} \,\in\, X$, is called Cauchy-Schwarz inequality.
\end{theorem}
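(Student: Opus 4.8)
The plan is to adapt the classical Cauchy--Schwarz argument to the semi-inner product obtained by freezing the last \,$n - 1$\, slots. Recall that the \,$n$-norm induced by the \,$n$-inner product is given by \,$\left\|\,x \,,\, x_{\,2} \,,\, \cdots \,,\, x_{\,n}\,\right\| \,=\, \sqrt{\left<\,x \,,\, x \;|\; x_{\,2} \,,\, \cdots \,,\, x_{\,n}\,\right>}$, so it suffices to prove the squared inequality \,$\left|\,\left<\,x \,,\, y \;|\; x_{\,2} \,,\, \cdots \,,\, x_{\,n}\,\right>\,\right|^{\,2} \,\leq\, \left<\,x \,,\, x \;|\; x_{\,2} \,,\, \cdots \,,\, x_{\,n}\,\right>\, \left<\,y \,,\, y \;|\; x_{\,2} \,,\, \cdots \,,\, x_{\,n}\,\right>$\, and then take square roots. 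Throughout, the vectors \,$x_{\,2},\, \cdots,\, x_{\,n}$\, are held fixed, and I abbreviate \,$a \,=\, \left<\,x \,,\, x \;|\; x_{\,2} \,,\, \cdots \,,\, x_{\,n}\,\right>$, \,$b \,=\, \left<\,x \,,\, y \;|\; x_{\,2} \,,\, \cdots \,,\, x_{\,n}\,\right>$\, and \,$c \,=\, \left<\,y \,,\, y \;|\; x_{\,2} \,,\, \cdots \,,\, x_{\,n}\,\right>$; by property (I) we have \,$a,\, c \,\geq\, 0$, and by property (III) the reversed product \,$\left<\,y \,,\, x \;|\; x_{\,2} \,,\, \cdots \,,\, x_{\,n}\,\right>$\, equals \,$\overline{b}$.

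First I would fix an arbitrary scalar \,$\lambda \,\in\, \mathbb{K}$\, and expand the quantity \,$\left<\,x \,-\, \lambda\,y \,,\, x \,-\, \lambda\,y \;|\; x_{\,2} \,,\, \cdots \,,\, x_{\,n}\,\right> \,\geq\, 0$, which is nonnegative by property (I). Using additivity (V) and homogeneity (IV) in the first argument together with the conjugate symmetry (III) to handle the second argument, this expands to \,$a \,-\, \overline{\lambda}\,b \,-\, \lambda\,\overline{b} \,+\, |\,\lambda\,|^{\,2}\,c \,\geq\, 0$. The whole inequality now lives in the scalars, and the remaining work is to choose \,$\lambda$\, optimally.

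Then I would split into two cases. If \,$c \,>\, 0$, substituting the minimizing choice \,$\lambda \,=\, b / c$\, collapses the expression to \,$a \,-\, |\,b\,|^{\,2} / c \,\geq\, 0$, i.e. \,$|\,b\,|^{\,2} \,\leq\, a\,c$, which is exactly the squared inequality. If instead \,$c \,=\, 0$\, (equivalently \,$y,\, x_{\,2},\, \cdots,\, x_{\,n}$\, are linearly dependent, so the right-hand side already vanishes), then substituting \,$\lambda \,=\, t\,b$\, with \,$t \,\in\, \mathbb{R}$\, gives \,$a \,-\, 2\,t\,|\,b\,|^{\,2} \,\geq\, 0$\, for every real \,$t$; letting \,$t \,\to\, +\infty$\, forces \,$b \,=\, 0$, so both sides are zero and the inequality holds trivially.

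The argument is essentially routine; the only point requiring care is that \,$\left<\,\cdot \,,\, \cdot \;|\; x_{\,2} \,,\, \cdots \,,\, x_{\,n}\,\right>$\, is merely a \emph{semi}-inner product rather than a genuine inner product (its positivity is not strict), so the degenerate case \,$c \,=\, 0$\, must be disposed of separately as above rather than by dividing by \,$c$. Keeping the bookkeeping of complex conjugates correct when expanding the mixed terms \,$-\,\overline{\lambda}\,b \,-\, \lambda\,\overline{b} \,=\, -\,2\,\mathrm{Re}\!\left(\overline{\lambda}\,b\right)$\, is the other place where a sign or conjugation slip could creep in.
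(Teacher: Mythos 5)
Your proof is correct. Note, however, that the paper does not prove this statement at all: it appears in the Preliminaries as a quoted result with a citation to Gunawan's paper, so there is no in-paper argument to compare yours against. Your argument --- expanding \,$\left<\,x \,-\, \lambda\,y \,,\, x \,-\, \lambda\,y \;|\; x_{\,2} \,,\, \cdots \,,\, x_{\,n}\,\right> \,\geq\, 0$\, via properties (I), (III), (IV), (V), optimizing over \,$\lambda$, and disposing of the degenerate case \,$c \,=\, 0$\, separately because the form with frozen slots is only a \emph{semi}-inner product --- is the standard Cauchy--Schwarz proof in this setting, and you handle both the conjugation bookkeeping and the degenerate case correctly.
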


\begin{theorem}\cite{Misiak}
For every n-inner product space \,$\left(\,X \,,\, \left<\,\cdot \,,\, \cdot \;|\; \cdot \,,\, \cdots \,,\, \cdot\,\right> \,\right)$,
\[\left \|\,x_{\,1} \,,\, x_{\,2} \,,\, \cdots \,,\, x_{\,n}\,\right\| \,=\, \sqrt{\left <\,x_{\,1} \,,\, x_{\,1} \;|\; x_{\,2} \,,\,  \cdots \,,\, x_{\,n}\,\right>}\] defines a n-norm for which
\[ \left <\,x,\, y \,|\, x_{\,2},\, \cdots,\, x_{\,n}\,\right> \,=\,\dfrac{\,1}{\,4}\, \left(\,\|\,x \,+\, y,\, x_{\,2},\, \cdots,\, x_{\,n}\,\|^{\,2} \,-\, \|\,x \,-\, y,\, x_{\,2},\, \cdots,\, x_{\,n}\,\|^{\,2}\,\right),\;\&\] 
\[\|\,x \,+\, y \,,\, x_{\,2} \,,\, \cdots \,,\, x_{\,n}\,\|^{\,2} \,+\, \|\,x \,-\, y \,,\, x_{\,2} \,,\, \cdots \,,\, x_{\,n}\,\|^{\,2}\]
\[ \,=\, 2\, \left(\,\|\,x \,,\, x_{\,2} \,,\, \cdots \,,\, x_{\,n}\,\|^{\,2} \,+\, \|\,y \,,\, x_{\,2} \,,\, \cdots \,,\, x_{\,n}\,\|^{\,2} \,\right)\] 
hold for all \;$x,\, y,\, x_{\,1},\, x_{\,2},\, \cdots,\, x_{\,n} \,\in\, X$.
\end{theorem}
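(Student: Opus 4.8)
The plan is to check the four axioms defining an $n$-norm for the map $\|x_1, x_2, \ldots, x_n\| := \sqrt{\langle x_1, x_1 \mid x_2, \ldots, x_n\rangle}$ and then to read off the polarization and parallelogram identities from a direct sesquilinear expansion. Axiom (I) of the $n$-inner product makes $\langle x_1, x_1 \mid x_2, \ldots, x_n\rangle$ a nonnegative real number, so the square root is well defined; moreover it vanishes exactly when $x_1, \ldots, x_n$ are linearly dependent, which is precisely axiom (I) for the $n$-norm. Homogeneity (axiom (III)) is immediate: by linearity in the first slot (IV) and Hermitian symmetry (III) of the $n$-inner product one gets $\langle \alpha x_1, \alpha x_1 \mid x_2, \ldots, x_n\rangle = \alpha\overline{\alpha}\,\langle x_1, x_1 \mid x_2, \ldots, x_n\rangle = |\alpha|^2 \|x_1, x_2, \ldots, x_n\|^2$, and taking square roots gives $\|\alpha x_1, x_2, \ldots, x_n\| = |\alpha|\,\|x_1, x_2, \ldots, x_n\|$.

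For the triangle inequality (axiom (IV)) I would expand $\|x + y, x_2, \ldots, x_n\|^2 = \langle x + y, x + y \mid x_2, \ldots, x_n\rangle$ using additivity (V) and Hermitian symmetry (III), so that the cross terms collapse to $\langle x, y \mid \cdots\rangle + \langle y, x \mid \cdots\rangle = 2\,\mathrm{Re}\,\langle x, y \mid \cdots\rangle \leq 2\,|\langle x, y \mid \cdots\rangle|$. Applying the Cauchy--Schwarz inequality stated above bounds this by $2\,\|x, x_2, \ldots, x_n\|\,\|y, x_2, \ldots, x_n\|$, whence $\|x + y, x_2, \ldots, x_n\|^2 \leq \big(\|x, x_2, \ldots, x_n\| + \|y, x_2, \ldots, x_n\|\big)^2$, and the inequality follows by taking square roots.

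The main obstacle is axiom (II), invariance of $\|x_1, x_2, \ldots, x_n\|$ under all permutations of its $n$ entries. Invariance under permutations of $x_2, \ldots, x_n$ comes for free from axiom (II) of the $n$-inner product, but interchanging the distinguished entry $x_1$ with a parameter $x_j$ is the genuine difficulty: it amounts to the derived identity $\langle x_1, x_1 \mid x_2, \ldots, x_n\rangle = \langle x_j, x_j \mid x_1, x_2, \ldots, \widehat{x_j}, \ldots, x_n\rangle$, which is not among the listed axioms. My approach would be to first dispose of the linearly dependent case (both sides are zero by axiom (I)) and then, in the independent case, establish the needed slot-homogeneity and exchange symmetry of the $n$-inner product in its parameter arguments as preliminary lemmas, reducing both sides to a common symmetric expression; this is the step I expect to carry the whole weight of the proof.

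Finally, once the map is known to be an $n$-norm, the two displayed identities are mechanical. Expanding $\|x \pm y, x_2, \ldots, x_n\|^2 = \langle x \pm y, x \pm y \mid x_2, \ldots, x_n\rangle$ by (III), (IV), (V) and subtracting makes the diagonal terms cancel, leaving $2\big(\langle x, y \mid \cdots\rangle + \langle y, x \mid \cdots\rangle\big)$; over the real field $\langle y, x \mid \cdots\rangle = \langle x, y \mid \cdots\rangle$, so dividing by $4$ yields the stated polarization identity. Adding the two expansions instead cancels the cross terms and leaves $2\big(\|x, x_2, \ldots, x_n\|^2 + \|y, x_2, \ldots, x_n\|^2\big)$, which is the parallelogram law.
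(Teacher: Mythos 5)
You have correctly isolated the crux --- $n$-norm axiom (II), invariance of $\|x_1, x_2, \ldots, x_n\|$ under permutations that move the distinguished first slot --- but your proposal leaves exactly that step as an unproven ``preliminary lemma,'' and this gap cannot be closed from the axioms as listed. The exchange identity
\[
\left<\,x_1 \,,\, x_1 \,|\, x_2 \,,\, x_3 \,,\, \ldots \,,\, x_n\,\right> \,=\, \left<\,x_2 \,,\, x_2 \,|\, x_1 \,,\, x_3 \,,\, \ldots \,,\, x_n\,\right>
\]
is genuinely independent of axioms (I)--(V) as stated in the paper's definition of an $n$-inner product. Concretely: let $\left<\,\cdot \,,\, \cdot \,|\, \cdot \,,\, \ldots \,,\, \cdot\,\right>_0$ be the standard Gram-determinant $n$-inner product on $\mathbb{R}^d$, $d \geq n$, and set
\[
\left<\,x \,,\, y \,|\, x_2 \,,\, \ldots \,,\, x_n\,\right>_{\ast} \,=\, \bigl(1 \,+\, G(x_2, \ldots, x_n)\bigr)\, \left<\,x \,,\, y \,|\, x_2 \,,\, \ldots \,,\, x_n\,\right>_0,
\]
where $G(x_2, \ldots, x_n)$ is the Gram determinant of the parameter vectors alone. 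The weight depends only on $x_2, \ldots, x_n$ and is symmetric in them, so this modified product satisfies all five listed axioms; yet for $n = 2$, $x = (1,0)$, $z = (0,2)$ in $\mathbb{R}^2$ one computes $\|x, z\|_{\ast}^2 = 5 \cdot 4 = 20$ while $\|z, x\|_{\ast}^2 = 2 \cdot 4 = 8$. So no argument can reduce ``both sides to a common symmetric expression'' from the listed axioms alone: the induced function simply need not be permutation invariant.

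The resolution is that Misiak's actual definition (and Gunawan's) contains a stronger version of axiom (II), asserting that the diagonal values $\left<\,x \,,\, x \,|\, x_2 \,,\, \ldots \,,\, x_n\,\right>$ are invariant under permutations of all $n$ vectors $x, x_2, \ldots, x_n$; the paper's transcription of the definition has dropped this clause. With the full axiom, $n$-norm axiom (II) is immediate, and the rest of your outline --- positivity and degeneracy from (I), homogeneity from (III) and (IV), the triangle inequality via Cauchy--Schwarz, and the polarization and parallelogram identities by expanding $\|x \pm y, x_2, \ldots, x_n\|^2$ (with the caveat, which you correctly note, that the stated polarization formula is the real-scalar one) --- is correct and routine. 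So the fix is not to derive the exchange symmetry but to invoke it as part of the definition in the cited source. Note also that the paper offers no proof to compare against: this theorem is quoted from Misiak, and the burden of the exchange symmetry is carried by the definition there, not by any argument in the paper.
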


\begin{definition}\cite{Mashadi}
Let \,$\left(\,X \,,\, \left\|\,\cdot \,,\, \cdots \,,\, \cdot \,\right\|\,\right)$\; be a linear n-normed space.\;A sequence \,$\{\,x_{\,k}\,\}$\; in \,$X$\, is said to converge to some \,$x \,\in\, X$\; if 
\[\lim\limits_{k \to \infty}\,\left\|\,x_{\,k} \,-\, x \,,\, x_{\,2} \,,\, \cdots \,,\, x_{\,n} \,\right\| \,=\, 0\]
for every \,$ x_{\,2},\, \cdots,\, x_{\,n} \,\in\, X$\, and it is called a Cauchy sequence if 
\[\lim\limits_{l \,,\, k \,\to\, \infty}\,\left \|\,x_{l} \,-\, x_{\,k} \,,\, x_{\,2} \,,\, \cdots \,,\, x_{\,n}\,\right\| \,=\, 0\]
for every \,$ x_{\,2},\, \cdots,\, x_{\,n} \,\in\, X$.\;The space \,$X$\, is said to be complete if every Cauchy sequence in this space is convergent in \,$X$.\;A n-inner product space is called n-Hilbert space if it is complete with respect to its induce norm.\;2-Hilbert space \cite{Kim} is a particular case of n-Hilbert space for \,$n \,=\, 2$.
\end{definition}

\begin{definition}\cite{Sadeghi}
Let \,$\left(\,X \,,\, \left<\,\cdot \,,\, \cdot \,|\, \cdot\, \right > \,\right)$\; be a 2-Hilbert space and \,$\xi \,\in\, X $.\,A sequence \,$\left\{\,f_{\,i}\,\right\}^{\infty}_{i \,=\, 1} \,\subseteq\, X$\, is said to be a 2-frame associated to \,$\xi$\, if there exist positive constants \,$A,\, B$\, such that
\[A\; \left\|\,f \,,\, \xi\,\right\|^{\,2} \,\leq\, \sum\limits_{i \,=\, 1}^{\infty}\, \left|\,\left <\,f \,,\,  f_{\,i} \,|\, \xi\, \right>\,\right|^{\,2} \,\leq\, B\, \left\|\,f \,,\, \xi\,\right\|^{\,2}\;\; \;\forall\; f \,\in\, X.\] 
\end{definition}

\begin{theorem}\cite{Sadeghi}
Let \,$L_{\,\xi}$\; denote the \,$1$-dimensional linear subspace of \,$X$\, generated by a fixed \;$\xi \,\in\, X$.\;Let \;$M_{\,\xi}$\, be the algebraic complement of \,$L_{\,\xi}$.\;Define \,$\left <\,x \,,\, y\, \right >_{\,\xi} \,=\, \left <\,x \,,\, y \,|\, \xi\, \right >$\, on \,$X$.\;This semi-inner product induces an inner product on the quotient space \,$X \,/\, L_{\,\xi}$\, which is given by
\[ \left <\,x \,+\, L_{\,\xi} \,,\, y \,+\, L_{\,\xi}\, \right >_{\,\xi} \,=\, \left <\,x \,,\, y\,\right >_{\,\xi} \,=\, \left <\,x \,,\, y \,|\, \xi\, \right >\; \;\forall \;\; x,\, y \,\in\, X.\]
By identifying \,$ X \,/\, L_{\,\xi}$\, with \,$M_{\,\xi}$\, in an obvious way, we obtain an inner product on \,$M_{\,\xi}$.\;Define \,$\left \|\,x\, \right \|_{\,\xi} \,=\, \sqrt{\, \left <\,x \,,\, x\, \right >_{\,\xi}} \;\;\; \left(\,x \,\in\, M_{\,\xi}\,\right)$.\;Then \,$\left(\,M_{\,\xi} \,,\, \|\,\cdot\,\|_{\,\xi}\,\right)$\, is a norm space.\;Let \,$X_{\,\xi}$\, be the completion of the inner product space \,$M_{\,\xi}$.
\end{theorem}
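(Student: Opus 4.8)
The plan is to treat \,$\left<\,\cdot \,,\, \cdot \,|\, \xi\, \right>$\,, with the slot \,$\xi \;(\,\neq\, 0\,)$\, held fixed, as a semi-inner product in its first two arguments and to show that quotienting out its degeneracy yields a genuine inner product space whose completion is \,$X_{\,\xi}$.

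First I would verify that \,$\left<\,x \,,\, y\, \right>_{\,\xi} \,:=\, \left<\,x \,,\, y \,|\, \xi\, \right>$\, is a semi-inner product on \,$X$.\;Linearity in the first argument and conjugate symmetry are read off directly from axioms (IV), (V) and (III) of the \,$2$-inner product, while \,$\left<\,x \,,\, x\, \right>_{\,\xi} \,\geq\, 0$\, is the first half of axiom (I).\;The decisive point is the second half of axiom (I): \,$\left<\,x \,,\, x\, \right>_{\,\xi} \,=\, 0$\, if and only if \,$x$\, and \,$\xi$\, are linearly dependent, which for fixed \,$\xi \,\neq\, 0$\, means exactly \,$x \,\in\, L_{\,\xi}$.\;Hence the degenerate subspace of the form is precisely \,$L_{\,\xi}$.

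The main step — and the one that does the real work — is to pass to the quotient \,$X \,/\, L_{\,\xi}$.\;For this I would first use the Cauchy-Schwarz inequality of the excerpt (with \,$n \,=\, 2$) to show that every \,$z \,\in\, L_{\,\xi}$\, is \,$\xi$-orthogonal to all of \,$X$: since \,$\left|\,\left<\,z \,,\, y\, \right>_{\,\xi}\,\right| \,\leq\, \left\|\,z \,,\, \xi\,\right\|\, \left\|\,y \,,\, \xi\,\right\|$\, and \,$\left\|\,z \,,\, \xi\,\right\| \,=\, 0$, we get \,$\left<\,z \,,\, y\, \right>_{\,\xi} \,=\, 0$\, for all \,$y$.\;Combined with sesquilinearity, this shows that if \,$x \,-\, x^{\prime}$\, and \,$y \,-\, y^{\prime}$\, lie in \,$L_{\,\xi}$\, then \,$\left<\,x \,,\, y\, \right>_{\,\xi} \,=\, \left<\,x^{\prime} \,,\, y^{\prime}\, \right>_{\,\xi}$, so \,$\left<\,x \,+\, L_{\,\xi} \,,\, y \,+\, L_{\,\xi}\, \right>_{\,\xi} \,:=\, \left<\,x \,,\, y\, \right>_{\,\xi}$\, is well-defined on \,$X \,/\, L_{\,\xi}$.\;Positive definiteness is then automatic: \,$\left<\,x \,+\, L_{\,\xi} \,,\, x \,+\, L_{\,\xi}\, \right>_{\,\xi} \,=\, 0$\, forces \,$x \,\in\, L_{\,\xi}$, i.e. the zero coset.

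It remains to transfer this to \,$M_{\,\xi}$\, and complete.\;Since \,$M_{\,\xi}$\, is an algebraic complement of \,$L_{\,\xi}$\, we have \,$X \,=\, L_{\,\xi} \,\oplus\, M_{\,\xi}$, so the restriction of the quotient map to \,$M_{\,\xi}$\, is a linear bijection onto \,$X \,/\, L_{\,\xi}$; transporting the inner product along it endows \,$M_{\,\xi}$\, with \,$\left<\,\cdot \,,\, \cdot\, \right>_{\,\xi}$.\;On \,$M_{\,\xi}$\, the only vector lying in \,$L_{\,\xi}$\, is \,$0$, so the form is positive definite and \,$\left\|\,x\,\right\|_{\,\xi} \,=\, \sqrt{\,\left<\,x \,,\, x\, \right>_{\,\xi}}$\, satisfies the norm axioms — homogeneity from sesquilinearity, the triangle inequality from Cauchy-Schwarz, definiteness as just noted — making \,$\left(\,M_{\,\xi} \,,\, \|\,\cdot\,\|_{\,\xi}\,\right)$\, a normed (in fact inner product) space.\;Finally \,$X_{\,\xi}$\, is the standard metric completion, which carries the inner product to a Hilbert space.\;I expect the well-definedness on the quotient to be the only genuinely non-routine step; the rest is verification of standard semi-inner-product bookkeeping.
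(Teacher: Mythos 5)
Your proposal is correct and takes the same route the paper relies on: the paper states this result as a cited preliminary (from \cite{Sadeghi}) without proof, and its own parallel construction of \,$X_{F}$\, in Section 3 follows exactly the quotient-by-degeneracy argument you give, down to identifying \,$X \,/\, L_{\,\xi}$\, with the algebraic complement and completing. The one step you verify in detail --- well-definedness of the induced inner product on the quotient, via Cauchy-Schwarz showing every element of \,$L_{\,\xi}$\, is \,$\xi$-orthogonal to all of \,$X$\, --- is precisely what the paper leaves as ``easily verified,'' and your handling of it (including positive definiteness on cosets) is sound.
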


\begin{theorem}\cite{Sadeghi}\label{th1.2}
Let \,$\left(\,X \,,\, \left<\,\cdot \,,\, \cdot \,|\, \cdot\, \right > \,\right)$\; be a 2-Hilbert space and \,$\xi \,\in\, X$.\;Then a sequence \,$\left\{\,f_{\,i}\,\right\}^{\,\infty}_{\,i \,=\, 1}$\; in \,$X$\; is a 2-frame associated to \,$\xi$\, with bounds \,$A \;\;\&\;\; B$\; if and only if it is a frame for the Hilbert space \,$X_{\,\xi}$\; with bounds \,$A \;\;\&\;\; B$.
\end{theorem}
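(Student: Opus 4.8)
The plan is to translate the \,$2$-frame inequality, term by term, into the ordinary frame inequality on the Hilbert space \,$X_{\,\xi}$, using the dictionary supplied by the construction preceding the statement. Writing \,$\left<\,x \,,\, y\,\right>_{\,\xi} \,=\, \left<\,x \,,\, y \,|\, \xi\,\right>$\, and recalling that \,$\left\|\,f \,,\, \xi\,\right\| \,=\, \sqrt{\left<\,f \,,\, f \,|\, \xi\,\right>} \,=\, \left\|\,f \,+\, L_{\,\xi}\,\right\|_{\,\xi}$\, together with \,$\left<\,f \,,\, f_{\,i} \,|\, \xi\,\right> \,=\, \left<\,f \,+\, L_{\,\xi} \,,\, f_{\,i} \,+\, L_{\,\xi}\,\right>_{\,\xi}$, the middle sum in the \,$2$-frame condition is exactly \,$\sum_{\,i} \left|\,\left<\,f \,+\, L_{\,\xi} \,,\, f_{\,i} \,+\, L_{\,\xi}\,\right>_{\,\xi}\,\right|^{\,2}$. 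Thus the \,$2$-frame condition for \,$\{\,f_{\,i}\,\}$\, is precisely the frame condition for the images \,$\{\,f_{\,i} \,+\, L_{\,\xi}\,\}$\, in \,$M_{\,\xi} \,\cong\, X \,/\, L_{\,\xi}$, but tested only against vectors of the dense subspace \,$M_{\,\xi} \,\subseteq\, X_{\,\xi}$\, rather than against all of \,$X_{\,\xi}$.

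With this dictionary in hand the reverse implication is immediate. If \,$\{\,f_{\,i} \,+\, L_{\,\xi}\,\}$\, is a frame for \,$X_{\,\xi}$\, with bounds \,$A,\, B$, the frame inequality holds for every \,$g \,\in\, X_{\,\xi}$, in particular for \,$g \,=\, f \,+\, L_{\,\xi}$\, with \,$f \,\in\, X$\, arbitrary; reading the inequality back through the dictionary recovers the \,$2$-frame bounds \,$A,\, B$\, for \,$\{\,f_{\,i}\,\}$.

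The substantive direction is the forward one, where the inequality is known only on the dense subspace \,$M_{\,\xi}$\, and must be propagated to the whole completion \,$X_{\,\xi}$. First I would dispose of the upper (Bessel) bound: given \,$g \,\in\, X_{\,\xi}$, choose \,$g_{\,k} \,\in\, M_{\,\xi}$\, with \,$g_{\,k} \,\to\, g$, and for each finite \,$N$\, use continuity of the inner product in \,$\sum_{\,i \,=\, 1}^{\,N} \left|\,\left<\,g_{\,k} \,,\, f_{\,i} \,+\, L_{\,\xi}\,\right>_{\,\xi}\,\right|^{\,2} \,\leq\, B\,\|\,g_{\,k}\,\|_{\,\xi}^{\,2}$\, to pass to the limit first in \,$k$\, and then in \,$N$, arriving at \,$\sum_{\,i} \left|\,\left<\,g \,,\, f_{\,i} \,+\, L_{\,\xi}\,\right>_{\,\xi}\,\right|^{\,2} \,\leq\, B\,\|\,g\,\|_{\,\xi}^{\,2}$. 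This in particular shows that the analysis map \,$g \,\longmapsto\, \left\{\,\left<\,g \,,\, f_{\,i} \,+\, L_{\,\xi}\,\right>_{\,\xi}\,\right\}$\, is a bounded operator from \,$X_{\,\xi}$\, into \,$l^{\,2}(\,\mathbb{N}\,)$.

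For the lower bound I would exploit that boundedness. With \,$g_{\,k} \,\to\, g$\, in \,$X_{\,\xi}$, continuity of the analysis map forces \,$\sum_{\,i} \left|\,\left<\,g_{\,k} \,,\, f_{\,i} \,+\, L_{\,\xi}\,\right>_{\,\xi}\,\right|^{\,2} \,\to\, \sum_{\,i} \left|\,\left<\,g \,,\, f_{\,i} \,+\, L_{\,\xi}\,\right>_{\,\xi}\,\right|^{\,2}$, while \,$\|\,g_{\,k}\,\|_{\,\xi}^{\,2} \,\to\, \|\,g\,\|_{\,\xi}^{\,2}$; passing to the limit in \,$A\,\|\,g_{\,k}\,\|_{\,\xi}^{\,2} \,\leq\, \sum_{\,i} \left|\,\left<\,g_{\,k} \,,\, f_{\,i} \,+\, L_{\,\xi}\,\right>_{\,\xi}\,\right|^{\,2}$\, then yields the lower frame bound on all of \,$X_{\,\xi}$. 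The one delicate point, which I would write out with care, is exactly this passage from the dense subspace to its completion: the upper bound extends by a routine truncation argument, but the lower bound genuinely requires the \,$l^{\,2}$-continuity of the analysis operator (itself a consequence of the Bessel bound just established), so that the infinite series on the right converges to the correct limit rather than being merely bounded below.
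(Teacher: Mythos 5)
There is nothing in the paper to compare your argument against: Theorem~\ref{th1.2} is quoted from \cite{Sadeghi} without proof, and the paper's only use of it is Theorem~\ref{th2}, whose ``proof'' simply defers to it. Judged on its own, your proposal is correct and complete, and it is the argument one would expect the cited reference to contain. The dictionary \,$\left\|\,f \,,\, \xi\,\right\| \,=\, \left\|\,f \,+\, L_{\,\xi}\,\right\|_{\,\xi}$\, and \,$\left<\,f \,,\, f_{\,i} \,|\, \xi\,\right> \,=\, \left<\,f \,+\, L_{\,\xi} \,,\, f_{\,i} \,+\, L_{\,\xi}\,\right>_{\,\xi}$\, is exactly the construction preceding the statement, and since \,$f \,\longmapsto\, f \,+\, L_{\,\xi}$\, maps \,$X$\, onto \,$X \,/\, L_{\,\xi} \,\cong\, M_{\,\xi}$, the \,$2$-frame condition is verbatim the frame inequality for the image sequence tested on the dense subspace \,$M_{\,\xi}$\, of \,$X_{\,\xi}$. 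This makes the reverse implication a pure restriction, and reduces the forward implication to the standard lemma that frame bounds valid on a dense subspace extend, with the same constants, to the completion. You also get the one genuinely delicate point of that lemma in the correct logical order: the Bessel bound must be extended first (finite truncation plus continuity of the inner product in each of finitely many terms), because extending the lower bound requires the \,$\sqrt{B}$-Lipschitz continuity of the analysis map \,$g \,\longmapsto\, \left\{\,\left<\,g \,,\, f_{\,i} \,+\, L_{\,\xi}\,\right>_{\,\xi}\,\right\}_{i\,=\,1}^{\infty}$\, into \,$l^{\,2}\,(\,\mathbb{N}\,)$, which is exactly what the Bessel bound supplies. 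Without it the argument would fail: Fatou-type reasoning only gives \,$\sum_{i}\left|\,\left<\,g \,,\, f_{\,i} \,+\, L_{\,\xi}\,\right>_{\,\xi}\,\right|^{\,2} \,\leq\, \liminf_{k}\sum_{i}\left|\,\left<\,g_{\,k} \,,\, f_{\,i} \,+\, L_{\,\xi}\,\right>_{\,\xi}\,\right|^{\,2}$, which is the wrong direction for a lower frame bound, so the norm convergence of the analysis images is truly needed, as you observed.
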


\section{Frame and it's properties in $n$-Hilbert space}

\smallskip\hspace{.6 cm} In this section, we introduce the notion of frame relative to \,$n$-Hilbert space and discussed several properties of them in \,$n$-Hilbert space.

\begin{theorem}
Let \,$\left(\,X \,,\, \left<\,\cdot \,,\, \cdot \,|\, \cdot \,,\, \cdots \,,\, \cdot\,\right> \,\right)$\, be a n-inner product space and \,$x_{\,1},\, \cdots,\, x_{\,n}$\, are elements in \,$X$.\;Then  
\[\left\|\,x_{\,1} \,,\, x_{\,2} \,,\, \cdots \,,\, x_{\,n}\,\right\|\,=\, \sup\,\left\{\,\left|\,\left<\,x_{\,1} \,,\, y \,|\, x_{\,2} \,,\, \cdots \,,\, x_{\,n}\,\right>\,\right| \,:\, y \,\in\, X,\, \left\|\,y \,,\, x_{\,2} \,,\, \cdots \,,\, x_{\,n}\,\right\| \,=\, 1 \,\right\}.\]
\end{theorem}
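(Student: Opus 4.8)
The plan is to prove the stated identity by squeezing the quantity
\[S \,=\, \sup\,\left\{\,\left|\,\left<\,x_{\,1} \,,\, y \,|\, x_{\,2} \,,\, \cdots \,,\, x_{\,n}\,\right>\,\right| \,:\, y \,\in\, X,\; \left\|\,y \,,\, x_{\,2} \,,\, \cdots \,,\, x_{\,n}\,\right\| \,=\, 1 \,\right\}\]
between two opposite bounds. First I would establish \,$S \,\leq\, \left\|\,x_{\,1} \,,\, x_{\,2} \,,\, \cdots \,,\, x_{\,n}\,\right\|$, and then I would exhibit a single admissible vector \,$y$\, at which the expression inside the supremum actually equals \,$\left\|\,x_{\,1} \,,\, x_{\,2} \,,\, \cdots \,,\, x_{\,n}\,\right\|$, which forces the reverse inequality \,$S \,\geq\, \left\|\,x_{\,1} \,,\, x_{\,2} \,,\, \cdots \,,\, x_{\,n}\,\right\|$.

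For the upper bound, the key tool is the Cauchy--Schwarz inequality recorded earlier in the excerpt. For every \,$y \,\in\, X$\, with \,$\left\|\,y \,,\, x_{\,2} \,,\, \cdots \,,\, x_{\,n}\,\right\| \,=\, 1$, that inequality gives
\[\left|\,\left<\,x_{\,1} \,,\, y \,|\, x_{\,2} \,,\, \cdots \,,\, x_{\,n}\,\right>\,\right| \,\leq\, \left\|\,x_{\,1} \,,\, x_{\,2} \,,\, \cdots \,,\, x_{\,n}\,\right\|\,\left\|\,y \,,\, x_{\,2} \,,\, \cdots \,,\, x_{\,n}\,\right\| \,=\, \left\|\,x_{\,1} \,,\, x_{\,2} \,,\, \cdots \,,\, x_{\,n}\,\right\|,\]
and taking the supremum over all admissible \,$y$\, yields \,$S \,\leq\, \left\|\,x_{\,1} \,,\, x_{\,2} \,,\, \cdots \,,\, x_{\,n}\,\right\|$.

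For the reverse inequality I would split into cases according to whether \,$\left\|\,x_{\,1} \,,\, x_{\,2} \,,\, \cdots \,,\, x_{\,n}\,\right\|$\, is zero. If it is zero, then since the supremand is nonnegative we already have \,$S \,\geq\, 0 \,=\, \left\|\,x_{\,1} \,,\, x_{\,2} \,,\, \cdots \,,\, x_{\,n}\,\right\|$. If it is nonzero, I would choose the normalized candidate \,$y \,=\, x_{\,1} \,/\, \left\|\,x_{\,1} \,,\, x_{\,2} \,,\, \cdots \,,\, x_{\,n}\,\right\|$; by the homogeneity axiom of the \,$n$-norm this \,$y$\, satisfies \,$\left\|\,y \,,\, x_{\,2} \,,\, \cdots \,,\, x_{\,n}\,\right\| \,=\, 1$\, and is therefore admissible. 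Invoking conjugate symmetry together with linearity of the \,$n$-inner product, and using that \,$\left\|\,x_{\,1} \,,\, x_{\,2} \,,\, \cdots \,,\, x_{\,n}\,\right\|$\, is a positive real scalar so conjugation is harmless, I compute
\[\left<\,x_{\,1} \,,\, y \,|\, x_{\,2} \,,\, \cdots \,,\, x_{\,n}\,\right> \,=\, \frac{\left<\,x_{\,1} \,,\, x_{\,1} \,|\, x_{\,2} \,,\, \cdots \,,\, x_{\,n}\,\right>}{\left\|\,x_{\,1} \,,\, x_{\,2} \,,\, \cdots \,,\, x_{\,n}\,\right\|} \,=\, \left\|\,x_{\,1} \,,\, x_{\,2} \,,\, \cdots \,,\, x_{\,n}\,\right\|,\]
the last equality being the definition of the induced norm. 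This shows \,$S \,\geq\, \left\|\,x_{\,1} \,,\, x_{\,2} \,,\, \cdots \,,\, x_{\,n}\,\right\|$, and combining with the upper bound completes the proof. The routine parts here are the two axiom invocations; the point needing genuine care is the degenerate behaviour, namely that the admissible set is nonempty precisely when \,$x_{\,2},\, \cdots,\, x_{\,n}$\, are linearly independent (so that some \,$y$\, with unit \,$n$-norm exists), and that the vanishing case \,$\left\|\,x_{\,1} \,,\, x_{\,2} \,,\, \cdots \,,\, x_{\,n}\,\right\| \,=\, 0$\, is consistent because both sides then collapse to zero.
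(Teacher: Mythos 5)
Your proof is correct and takes essentially the same route as the paper's: the Cauchy--Schwarz inequality bounds the supremum above by $\left\|\,x_{\,1} \,,\, x_{\,2} \,,\, \cdots \,,\, x_{\,n}\,\right\|$, and the normalized vector $y \,=\, x_{\,1} \,/\, \left\|\,x_{\,1} \,,\, x_{\,2} \,,\, \cdots \,,\, x_{\,n}\,\right\|$ serves as the admissible witness attaining that value. You are in fact slightly more careful than the paper, whose proof divides by $\left\|\,x_{\,1} \,,\, x_{\,2} \,,\, \cdots \,,\, x_{\,n}\,\right\|$ without comment, whereas you separate out the degenerate case where this norm vanishes and note the nonemptiness issue for the admissible set.
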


\begin{proof}
Now,
\[\left\|\,x_{\,1} \,,\, x_{\,2} \,,\, \cdots \,,\, x_{\,n}\,\right\| \,=\, \left<\,x_{\,1} \,,\, \dfrac{x_{\,1}}{\left\|\,x_{\,1} \,,\, x_{\,2} \,,\, \cdots \,,\, x_{\,n}\,\right\|} \;|\; x_{\,2} \,,\, \cdots \,,\, x_{\,n}\,\right>\]
\[\hspace{2cm}\leq\, \sup\,\left\{\,\left|\,\left<\,x_{\,1} \,,\, y \;|\; x_{\,2} \,,\, \cdots \,,\, x_{\,n}\,\right>\,\right| \;:\; y \,\in\, X\; \;\text{and}\;\; \left\|\,y \,,\, x_{\,2} \,,\, \cdots \,,\, x_{\,n}\,\right\| \,=\, 1 \,\right\}\]
\[\hspace{3cm}\left[\;\text{where}\;\; y \,=\, \dfrac{x_{\,1}}{\left\|\,x_{\,1} \,,\, x_{\,2} \,,\, \cdots \,,\, x_{\,n}\,\right\|}\,\right]\]
\[\hspace{2cm}\leq\; \sup\,\left\{\,\left \|\,x_{\,1} \,,\, x_{\,2} \,,\, \cdots \,,\, x_{\,n}\,\right\|\,\left\|\,y \,,\, x_{\,2} \,,\, \cdots \,,\, x_{\,n}\,\right\| \;:\;  \left\|\,y \,,\, x_{\,2} \,,\, \cdots \,,\, x_{\,n}\,\right\| \,=\, 1 \,\right\}\]
\[\hspace{3cm}[\;\text{by Cauchy-Schwarz inequality}\;]\]
\[ =\; \left\|\,x_{\,1} \,,\, x_{\,2} \,,\, \cdots \,,\, x_{\,n}\,\right\|.\hspace{5.8cm} \]
\end{proof}

Let \,$L_{F}$\, denote the linear subspace of \,$X$\, spanned by the non-empty finite set \,$F \,=\, \left\{\,\,a_{\,2} \,,\, a_{\,3} \,,\, \cdots \,,\, a_{\,n}\,\right\}$, where \,$a_{\,2},\, a_{\,3},\, \cdots,\, a_{\,n}$\, are fixed elements in \,$X$.\;Then the quotient space \,$ X \,/\, L_{F}$\, is a normed linear space with respect to the norm, 
\[\left\|\,x \,+\, L_{F}\,\right\|_{F} \,=\, \left\|\,x \,,\, a_{\,2} \,,\,  \cdots \,,\, a_{\,n}\,\right\|\; \;\text{for every}\; x \,\in\, X.\]
Let \,$M_{F}$\, be the algebraic complement of \,$L_{F}$, then \,$X \,=\, L_{F} \,\oplus\, M_{F}$.\;Define   
\[\left<\,x \,,\, y\,\right>_{F} \,=\, \left<\,x \,,\, y \;|\; a_{\,2} \,,\,  \cdots \,,\, a_{\,n}\,\right>\; \;\text{on}\; \;X.\]
Then \,$\left<\,\cdot \,,\, \cdot\,\right>_{F}$\, is a semi-inner product on \,$X$\, and this semi-inner product induces an inner product on the quotient space \,$X \,/\, L_{F}$\; which is given by
\[\left<\,x \,+\, L_{F} \,,\, y \,+\, L_{F}\,\right>_{F} \,=\, \left<\,x \,,\, y\,\right>_{F} \,=\, \left<\,x \,,\, y \,|\, a_{\,2} \,,\,  \cdots \,,\, a_{\,n} \,\right>\;\; \;\forall \;\; x,\, y \,\in\, X.\]
By identifying \,$ X \,/\, L_{F}$\; with \,$M_{F}$\; in an obvious way, we obtain an inner product on \,$M_{F}$.\;Now, for every \,$x \,\in\, M_{F}$, we define \,$\|\,x\,\|_{F} \;=\; \sqrt{\left<\,x \,,\, x \,\right>_{F}}$\, and it can be easily verify that \,$\left(\,M_{F} \,,\, \|\,\cdot\,\|_{F}\,\right)$\; is a norm space.\;Let \,$X_{F}$\; be the completion of the inner product space \,$M_{F}$.\\

For the remaining part of this paper, \,$\left(\,X \,,\, \left<\,\cdot \,,\, \cdot \,|\, \cdot \,,\, \cdots \,,\, \cdot\,\right> \,\right)$\; is consider to be a \,$n$-Hilbert space.\;$I_{F}$\, will denote the identity operator on \,$X_{F}$\, and \,$\mathcal{B}\,(\,X_{F}\,)$\, denote the space of all bounded linear operator on \,$X_{F}$. 

\begin{definition}\label{def0.1}
Let \,$X$\, be a n-Hilbert space.\;A sequence \,$\left\{\,f_{\,i}\,\right\}^{\,\infty}_{\,i \,=\, 1}$\, in \,$X$\, is said to be a frame associated to \,$\left(\,a_{\,2},\, \cdots,\, a_{\,n}\,\right)$\, for \,$X$\, if there exist constant \,$0 \,<\, A \,\leq\, B \,<\, \infty$\, such that 
\[ A \, \left\|\,f \,,\, a_{\,2} \,,\, \cdots \,,\, a_{\,n} \,\right\|^{\,2} \,\leq\, \sum\limits^{\infty}_{i \,=\, 1}\,\left|\,\left<\,f \,,\, f_{\,i} \,|\, a_{\,2} \,,\, \cdots \,,\, a_{\,n}\,\right>\,\right|^{\,2} \,\leq\, B\, \left\|\,f \,,\, a_{\,2} \,,\, \cdots \,,\, a_{\,n}\,\right\|^{\,2}\]
for all \,$f \,\in\, X$.\;The infimum of all such \,$B$\, is called the optimal upper frame bound and supremum of all such \,$A$\, is called the optimal lower frame bound.\;A sequence \,$\left\{\,f_{\,i}\,\right\}^{\,\infty}_{\,i \,=\, 1}$\, satisfies the inequality 
\[\sum\limits^{\infty}_{i \,=\, 1}\,\left|\,\left<\,f \,,\, f_{\,i} \,|\, a_{\,2} \,,\, \cdots \,,\, a_{\,n}\,\right>\,\right|^{\,2} \,\leq\, B\; \left\|\,f \,,\, a_{\,2} \,,\, \cdots \,,\, a_{\,n}\,\right\|^{\,2}\; \;\forall\; f \,\in\, X\] is called a Bessel sequence associated to \,$\left(\,a_{\,2},\, \cdots,\, a_{\,n}\,\right)$\, in \,$X$\, with bound \,$B$.
\end{definition}

\begin{theorem}\label{th2}
Let \,$\left(\,X \,,\, \left<\,\cdot \,,\, \cdot \,|\, \cdot \,,\, \cdots \,,\, \cdot\,\right> \,\right)$\; be a n-Hilbert space.\,Then \,$\left\{\,f_{\,i}\,\right\}^{\,\infty}_{\,i \,=\, 1} \,\subseteq\, X$\; is a frame associated to \,$\left(\,a_{\,2},\, \cdots,\, a_{\,n}\,\right)$\; with bounds \,$A \;\;\&\;\; B$\; if and only if it is a frame for the Hilbert space \,$X_{F}$\; with bounds \,$A \;\;\&\;\; B$.
\end{theorem}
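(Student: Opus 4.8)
The plan is to reduce the theorem to the defining identities that link the $n$-inner product in the fixed directions $a_{\,2},\, \cdots,\, a_{\,n}$ to the genuine inner product of $X_F$, and then to transport the two frame inequalities across the canonical map of $X$ into $X_F$. Write $\pi \,:\, X \,\to\, X \,/\, L_F \,\cong\, M_F \,\subseteq\, X_F$ for the natural map sending $x$ to $x \,+\, L_F$, so that the sequence called ``a frame for $X_F$'' in the statement is really $\left\{\,\pi(f_{\,i})\,\right\}_{i \,=\, 1}^{\infty}$. From the construction of $X_F$ recalled before Definition \ref{def0.1} one has, for all $x,\, y \,\in\, X$, the identities $\left\|\,\pi(x)\,\right\|_{F} \,=\, \left\|\,x \,,\, a_{\,2} \,,\, \cdots \,,\, a_{\,n}\,\right\|$ and $\left<\,\pi(x) \,,\, \pi(y)\,\right>_{F} \,=\, \left<\,x \,,\, y \,|\, a_{\,2} \,,\, \cdots \,,\, a_{\,n}\,\right>$, both immediate from the definitions of $\left<\,\cdot \,,\, \cdot\,\right>_{F}$ and $\left\|\,\cdot\,\right\|_{F}$. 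First I would record these two identities explicitly, since everything afterwards is bookkeeping built on top of them.

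Next I would observe that, under this dictionary, the $n$-frame inequality of Definition \ref{def0.1} is, term for term, the ordinary frame inequality for $\left\{\,\pi(f_{\,i})\,\right\}_{i \,=\, 1}^{\infty}$ in $X_F$, but tested only on vectors of the form $g \,=\, \pi(f)$ with $f \,\in\, X$: for such $g$ the middle sum becomes $\sum_{i}\left|\,\left<\,\pi(f) \,,\, \pi(f_{\,i})\,\right>_{F}\,\right|^{\,2}$ while the flanking terms become $A\,\left\|\,\pi(f)\,\right\|_{F}^{\,2}$ and $B\,\left\|\,\pi(f)\,\right\|_{F}^{\,2}$. Since $\pi(X) \,=\, M_F$ is dense in $X_F$ by construction, the two assertions of the theorem differ only in whether the inequalities are imposed on the dense subspace $M_F$ or on all of $X_F$. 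The converse direction is then immediate: a frame inequality valid on all of $X_F$ restricts to the vectors $\pi(f)$, $f \,\in\, X$, and the identities above read off the $n$-frame inequality with the same bounds.

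The forward direction is where the real content lies, namely extending both inequalities from $M_F$ to its completion $X_F$. For the upper (Bessel) bound I would use a truncation argument: given $g \,\in\, X_F$, choose $g_{\,k} \,\in\, M_F$ with $g_{\,k} \,\to\, g$; for each fixed $N$ one has $\sum_{i \,=\, 1}^{N}\left|\,\left<\,g_{\,k} \,,\, \pi(f_{\,i})\,\right>_{F}\,\right|^{\,2} \,\leq\, B\,\left\|\,g_{\,k}\,\right\|_{F}^{\,2}$, and letting $k \,\to\, \infty$ (continuity of the inner product and of the norm) and then $N \,\to\, \infty$ yields $\sum_{i}\left|\,\left<\,g \,,\, \pi(f_{\,i})\,\right>_{F}\,\right|^{\,2} \,\leq\, B\,\left\|\,g\,\right\|_{F}^{\,2}$; alternatively this is precisely Theorem \ref{th1.1}. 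Once the Bessel bound holds on all of $X_F$, the analysis map $g \,\mapsto\, \left(\,\left<\,g \,,\, \pi(f_{\,i})\,\right>_{F}\,\right)_{i}$ is a bounded operator into $l^{\,2}(\,\mathbb{N}\,)$, so $g \,\mapsto\, \sum_{i}\left|\,\left<\,g \,,\, \pi(f_{\,i})\,\right>_{F}\,\right|^{\,2}$ is continuous on $X_F$; the lower estimate $A\,\left\|\,g\,\right\|_{F}^{\,2} \,\leq\, \sum_{i}\left|\,\left<\,g \,,\, \pi(f_{\,i})\,\right>_{F}\,\right|^{\,2}$, being an inequality between continuous functions that holds on the dense set $M_F$, then holds throughout $X_F$.

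I expect the passage to the completion to be the only genuine obstacle, and within it the lower bound is the delicate point: density by itself does not preserve a lower estimate, and this is exactly why I first secure the Bessel bound on all of $X_F$ and only afterwards promote the lower bound by continuity of the middle quantity. The remaining verifications -- that $\pi$ is well defined, that the two identities hold, and that $M_F$ is dense in $X_F$ -- are routine consequences of the construction of $X_F$ given before Definition \ref{def0.1}, so no further difficulty arises there.
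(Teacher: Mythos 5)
Your proof is correct, but it is genuinely more than what the paper does: the paper's entire ``proof'' of Theorem \ref{th2} is a one-line deferral, asserting that the statement is an extension of Theorem \ref{th1.2} (the $2$-Hilbert space case, quoted from the reference [Sadeghi]) and that the argument ``directly follows'' from that case, with no details supplied. You instead give a self-contained argument in the $n$-Hilbert setting: you make the canonical map $\pi \,:\, X \,\to\, M_{F} \,\subseteq\, X_{F}$ and the two transport identities explicit, observe that the $n$-frame inequality is exactly the $X_{F}$-frame inequality tested on the dense subspace $M_{F} \,=\, \pi(X)$, and then handle the only nontrivial point, namely passage to the completion. Your treatment of that point is the right one and is exactly what the paper's deferral sweeps under the rug: the upper bound extends by a truncation (or by Theorem \ref{th1.1}), and only after the Bessel bound is secured on all of $X_{F}$ does the quantity $g \,\mapsto\, \sum_{i}\left|\,\left<\,g \,,\, \pi(f_{\,i})\,\right>_{F}\,\right|^{\,2}$ become continuous, which is what lets the lower bound --- which density alone would not preserve --- propagate from $M_{F}$ to $X_{F}$. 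The converse direction by restriction is immediate in both treatments. In short, the paper buys brevity by citing the $n \,=\, 2$ prototype, while your write-up supplies the actual verification that the prototype's argument survives the generalization; if anything, your proof is what a referee would ask to see, and I find no gap in it.
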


\begin{proof}
This theorem is an extension  of the Theorem (\ref{th1.2}) and proof of this theorem directly follows from that of the Theorem (\ref{th1.2}). 
\end{proof}

\begin{theorem}\label{th2.1}
Let \,$\left\{\,f_{\,i}\,\right\}^{\,\infty}_{\,i \,=\, 1}$\; be a Bessel sequence associated to \,$\left(\,a_{\,2},\, \cdots,\, a_{\,n}\,\right)$\; in \,$X$\, with bound \,$B$.\;Then the operator given by
\[T_{F} \,:\, l^{\,2}\,(\,\mathbb{N}\,) \,\to\, X_{F},\; T_{F}\,\left(\,\left\{\,c_{i}\,\right\}_{i \,=\, 1}^{\,\infty}\,\right) \,=\, \sum\limits^{\infty}_{i \,=\, 1}\;c_{\,i}\,f_{\,i}\] is well-defined and bounded.\;Furthermore, the adjoint operator of \,$T_{F}$\; is given by  
\[T_{F}^{\,\ast} \,:\, X_{F} \,\to\, l^{\,2}\,(\,\mathbb{N}\,),\; T_{F}^{\,\ast}\,(\,f\,) \;=\; \left\{\,\left<\,f \;,\; f_{\,i} \;|\; a_{\,2} \;,\; \cdots \;,\; a_{\,n} \,\right>\,\right\}_{i \,=\, 1}^{\,\infty}.\]
\end{theorem}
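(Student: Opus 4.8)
The plan is to reduce everything to the classical Hilbert-space situation inside \,$X_{F}$, where Theorem (\ref{th1.1}) applies verbatim. The crucial observation is that the induced norm on \,$X_{F}$\, is \,$\|\,f\,\|_{F} \,=\, \sqrt{\langle\,f \,,\, f\,\rangle_{F}} \,=\, \|\,f \,,\, a_{\,2} \,,\, \cdots \,,\, a_{\,n}\,\|$\, and the induced inner product is \,$\langle\,f \,,\, g\,\rangle_{F} \,=\, \langle\,f \,,\, g \,|\, a_{\,2} \,,\, \cdots \,,\, a_{\,n}\,\rangle$. Hence, after identifying each \,$f_{\,i}$\, with its image in \,$X_{F}$, the Bessel hypothesis reads
\[\sum\limits_{i \,=\, 1}^{\infty}\, \left|\,\langle\,f \,,\, f_{\,i}\,\rangle_{F}\,\right|^{\,2} \,\leq\, B\, \|\,f\,\|_{F}^{\,2}\; \;\text{for all}\; f \,\in\, X_{F},\]
which is precisely the statement that \,$\left\{\,f_{\,i}\,\right\}_{i \,=\, 1}^{\infty}$\, is a Bessel sequence with bound \,$B$\, for the Hilbert space \,$X_{F}$.

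First I would invoke Theorem (\ref{th1.1}): since \,$X_{F}$\, is a Hilbert space and \,$\left\{\,f_{\,i}\,\right\}_{i \,=\, 1}^{\infty}$\, is Bessel in \,$X_{F}$\, with bound \,$B$, the synthesis operator \,$T \,:\, l^{\,2}(\,\mathbb{N}\,) \,\to\, X_{F}$, \,$T(\,\{\,c_{\,i}\,\}\,) \,=\, \sum_{i} c_{\,i}\,f_{\,i}$, is well-defined (\,in particular the series converges in the norm of \,$X_{F}$\,) and bounded with \,$\|\,T\,\| \,\leq\, \sqrt{B}$. Because \,$T_{F}$\, is literally this operator, the well-definedness and boundedness of \,$T_{F}$\, follow at once.

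Next I would compute the adjoint directly from the defining relation \,$\langle\,T_{F}(\,\{\,c_{\,i}\,\}\,) \,,\, f\,\rangle_{F} \,=\, \langle\,\{\,c_{\,i}\,\} \,,\, T_{F}^{\,\ast}(\,f\,)\,\rangle$. For \,$\{\,c_{\,i}\,\} \,\in\, l^{\,2}(\,\mathbb{N}\,)$\, and \,$f \,\in\, X_{F}$, expanding the left-hand side and using the continuity of the inner product to pass it inside the convergent series gives
\[\langle\,T_{F}(\,\{\,c_{\,i}\,\}\,) \,,\, f\,\rangle_{F} \,=\, \sum\limits_{i \,=\, 1}^{\infty} c_{\,i}\, \langle\,f_{\,i} \,,\, f\,\rangle_{F} \,=\, \sum\limits_{i \,=\, 1}^{\infty} c_{\,i}\, \overline{\langle\,f \,,\, f_{\,i} \,|\, a_{\,2} \,,\, \cdots \,,\, a_{\,n}\,\rangle},\]
and matching this with \,$\sum_{i} c_{\,i}\, \overline{(\,T_{F}^{\,\ast} f\,)_{\,i}}$\, forces \,$(\,T_{F}^{\,\ast} f\,)_{\,i} \,=\, \langle\,f \,,\, f_{\,i} \,|\, a_{\,2} \,,\, \cdots \,,\, a_{\,n}\,\rangle$\, for every \,$i$. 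The Bessel inequality itself guarantees that this sequence lies in \,$l^{\,2}(\,\mathbb{N}\,)$, so \,$T_{F}^{\,\ast}$\, is the asserted map.

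The only point that requires care is the legitimacy of interchanging the infinite summation with the inner product in the adjoint computation; this is justified by the boundedness of \,$T_{F}$\, established above (\,equivalently, by continuity of the functional \,$\langle\,\cdot \,,\, f\,\rangle_{F}$\, applied to the convergent partial sums\,). Beyond this, I do not anticipate a genuine obstacle: once the Bessel condition has been transported into \,$X_{F}$\, via the quotient-space construction, the statement is the \,$n$-Hilbert-space transcription of the standard synthesis/analysis operator pair, and Theorems (\ref{th2}) and (\ref{th1.1}) do the real work.
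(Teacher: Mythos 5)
Your proof is correct, but it takes a genuinely different route from the paper's. The paper establishes well-definedness and boundedness of \,$T_{F}$\, by a direct estimate inside the \,$n$-Hilbert space: it bounds the tails \,$\bigl\|\sum_{i=k+1}^{l} c_{i}\,f_{i} \,,\, a_{2} \,,\, \cdots \,,\, a_{n}\bigr\|^{2}$\, using the supremum characterization of the \,$n$-norm proved at the start of Section 3, Cauchy--Schwarz, and the Bessel condition, obtaining the bound \,$B\sum_{i=k+1}^{l}|c_{i}|^{2}$, so the partial sums are Cauchy in \,$X_{F}$; you instead transport the Bessel condition into the Hilbert space \,$X_{F}$\, and invoke Theorem (\ref{th1.1}) wholesale. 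Your reduction is legitimate --- it is exactly the mechanism the paper itself uses later (e.g.\ in the surjectivity characterization of Section 4) --- but two points should be made explicit: (i) Theorem (\ref{th2}) is stated for frames (two-sided bounds), so you need its Bessel-only analogue, which holds by the same argument but is not literally on record; and (ii) passing from ``for all \,$f \in X$'' to ``for all \,$f \in X_{F}$'' needs the density of \,$M_{F}$\, in the completion \,$X_{F}$\, together with continuity (truncate the sum at \,$N$, extend the finite inequality by continuity, then let \,$N \to \infty$), while identifying each \,$f_{i}$\, with its image in \,$M_{F}$\, uses that the \,$L_{F}$-component of \,$f_{i}$\, is annihilated by \,$\left<\,\cdot \,,\, \cdot \,|\, a_{2} \,,\, \cdots \,,\, a_{n}\,\right>$. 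On the adjoint, the two computations essentially coincide; the one real difference is how membership of \,$\left\{\left<\,f \,,\, f_{i} \,|\, a_{2} \,,\, \cdots \,,\, a_{n}\,\right>\right\}_{i=1}^{\infty}$\, in \,$l^{2}(\mathbb{N})$\, is justified: the paper cites \cite{Heuser} (a sequence pairing convergently against every \,$l^{2}$\, sequence is itself in \,$l^{2}$), whereas you read it off directly from the Bessel inequality --- simpler, and it avoids the external citation. What the paper's direct estimate buys is self-containedness, exhibiting the bound \,$\sqrt{B}$\, without leaning on Theorem (\ref{th2}) (whose own proof is only sketched by reference to the 2-Hilbert space literature); what your route buys is brevity and the clean conceptual point that, once the quotient/completion identification is in place, the theorem is just the classical synthesis/analysis statement read inside \,$X_{F}$.
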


\begin{proof}
Let \,$\left\{\,c_{i}\,\right\}_{i \,=\, 1}^{\,\infty} \,\in\, l^{\,2}\,(\,\mathbb{N}\,)$.\;Then
\[\left\|\,\sum\limits^{l}_{i \,=\, 1}\,c_{\,i} \, f_{\,i} \,-\, \sum\limits^{k}_{i \,=\, 1}\,c_{i}\, f_{\,i}\,\right\|_{F}^{\,2} \,=\, \left\|\,\sum\limits^{l}_{i \,=\, k+1}\,c_{\,i}\,f_{\,i}\, \;,\; a_{\,2} \,,\, \cdots \,,\, a_{\,n}\,\right\|^{\,2}\hspace{3cm}\]
\[=\; \sup\,\left\{\,\left|\,\left<\,\sum\limits^{l}_{i \,=\, k+1}\,c_{\,i}\,f_{\,i}\, \;,\; y \,|\, a_{\,2} \,,\, \cdots \,,\, a_{\,n}\,\right>\,\right|^{\,2} \,:\, y \,\in\, X,\, \left\|\,y \,,\, a_{\,2} \,,\, \cdots \,,\, a_{\,n}\,\right\| \,=\, 1 \,\right\}\]
\[=\; \sup\,\left\{\,\left |\,\sum\limits^{l}_{i \,=\, k+1}\,c_{\,i}\,\left<\,f_{\,i}\, \;,\; y \;|\; a_{\,2} \;,\; \cdots \;,\; a_{\,n}\,\right>\,\right |^{\,2} \;:\; y \;\in\; X,\; \left \|\,y \;,\; a_{\,2} \;,\; \cdots \;,\; a_{\,n}\,\right\| \,=\, 1 \,\right\}\] 
\[\leq\, \sum\limits^{l}_{i \,=\, k+1}\,\left|\,c_{\,i}\,\right|^{\,2}\, \sup\,\left\{\,\sum\limits^{l}_{i \,=\, k+1}\,\left |\,\left<\,f_{\,i}\, \,,\, y \,|\, a_{\,2} \,,\, \cdots \,,\, a_{\,n}\,\right>\,\right|^{\,2} \,:\, y \,\in\, X,\, \left\|\,y \,,\, a_{\,2} \,,\, \cdots \,,\, a_{\,n}\,\right\| \,=\, 1 \,\right\}\]
\[\hspace{6cm}[\;\text{using Cauchy-Schwarz iequality}\;]\]
\[\leq\;  B\; \sum\limits^{l}_{i \,=\, k+1}\,|\,c_{\,i}\,|^{\,2}\;  [\;\; \text{since}\; \left\{\,f_{\,i}\,\right\}^{\,\infty}_{\,i \,=\, 1}\; \;\text{is a Bessel sequence associated to \,$\left(\,a_{\,2},\, \cdots,\, a_{\,n}\,\right)$}\;\;].\hspace{1.7cm}\]
This implies that \,$\sum\limits^{\infty}_{i \,=\, 1}\,c_{\,i}\,f_{\,i}$\; is convergent in \,$X_{F}$\; if \,$\{\,c_{\,i}\,\}^{\,\infty}_{\,i \,=\, 1} \;\in\; l^{\,2}\,(\,\mathbb{N}\,)$.\;Using the continuity of \,$n$-norm function, we get 
\[\left\|\,\sum\limits^{\infty}_{i \,=\, 1}\,c_{\,i}\,f_{\,i}\,\right\|_{F}^{\,2} \,\leq\, B\, \sum\limits^{\infty}_{i \,=\, 1}\,|\,c_{\,i}\,|^{\,2} \,\Rightarrow\, \left\|\,T_{F}\,\left\{\,c_{i}\,\right\}_{i \,=\, 1}^{\,\infty}\,\right\|_{F} \,\leq\, \sqrt{B}\, \left\|\,\left\{\,c_{i}\,\right\}_{i \,=\, 1}^{\,\infty}\,\right\|_{l^{\,2}}.\]The above calculation shows that \,$T_{F}$\; is well-defined and bounded.\;To find the expression for \,$T^{\,\ast}_{F}$, let \,$f \,\in\, X_{F}$\, and \,$\left\{\,c_{i}\,\right\}_{i \,=\, 1}^{\,\infty} \,\in\, l^{\,2}\,(\,\mathbb{N}\,)$.\;Then
\[\left<\,f \,,\, T_{F}\,\left\{\,c_{i}\,\right\}_{i \,=\, 1}^{\,\infty} \,|\, a_{\,2} \,,\, \cdots \,,\, a_{\,n} \,\right> \,=\, \left<\,f \,,\, \sum\limits^{\infty}_{i \,=\, 1}\,c_{\,i}\,f_{\,i} \,|\, a_{\,2} \,,\, \cdots \,,\, a_{\,n}\,\right>\hspace{1cm}\]
\[\hspace{4.7cm} \,=\, \sum\limits^{\infty}_{i \,=\, 1}\,\overline{\,c_{\,i}}\,\left<\,f \,,\, f_{\,i} \,|\, a_{\,2} \,,\, \cdots \,,\, a_{\,n} \,\right>.\]The convergence of the series \,$\sum\limits^{\infty}_{i \,=\, 1}\,\overline{\,c_{\,i}}\,\left<\,f \,,\, f_{\,i} \,|\, a_{\,2} \,,\, \cdots \,,\, a_{\,n} \,\right>$\, for all \,$\left\{\,c_{i}\,\right\}_{i \,=\, 1}^{\,\infty} \,\in\, l^{\,2}\,(\,\mathbb{N}\,)$\, implies that \,$\left\{\,\left<\,f \;,\; f_{\,i} \;|\; a_{\,2} \;,\; \cdots \;,\; a_{\,n} \,\right>\,\right\}_{i \,=\, 1}^{\,\infty} \,\in\, l^{\,2}\,(\,\mathbb{N}\,)$, (\,see  page 145 of \cite{Heuser}\,). 
\[\text{Therefore,}\hspace{.1cm}\left<\,f \,,\, T_{F}\,\left\{\,c_{i}\,\right\}_{i \,=\, 1}^{\,\infty}\,\right>_{F} \,=\, \left<\;\left\{\;\left<\,f \;,\; f_{\,i} \;|\; a_{\,2} \;,\; \cdots \;,\; a_{\,n} \,\right>\,\right\}_{i \,=\, 1}^{\,\infty} \;,\; \left\{\,c_{i}\,\right\}_{i \,=\, 1}^{\,\infty}\;\right>_{l^{\,2}\,(\,\mathbb{N}\,)}\]and hence \,$T_{F}^{\,\ast}\,(\,f\,) \;=\; \left\{\,\left<\,f \;,\; f_{\,i} \;|\; a_{\,2} \;,\; \cdots \;,\; a_{\,n} \,\right>\,\right\}_{i \,=\, 1}^{\,\infty}$.
\end{proof}

\begin{remark}
The operator \,$T_{F}$, defined in Theorem (\ref{th2.1}), is called pre-frame operator and the adjoint operator of \,$T_{F}$\; is called analysis operator for \,$\left\{\,f_{\,i}\,\right\}^{\,\infty}_{\,i \,=\, 1}$.
\end{remark}

\begin{definition}\label{def1}
Let \,$\left\{\,f_{\,i}\,\right\}^{\,\infty}_{\,i \,=\, 1}$\; be a frame associated to \,$\left(\,a_{\,2},\, \cdots,\, a_{\,n}\,\right)$\; for \,$X$. Then the operator \,$S_{F} \,:\, X_{F} \,\to\, X_{F}$\, defined by 
\[S_{F}\,(\,f\,) \,=\, \sum\limits^{\infty}_{i \,=\, 1}\, \left<\,f \,,\, f_{\,i} \,|\, a_{\,2} \,,\, \cdots \,,\, a_{\,n} \,\right>\,f_{\,i}\; \;\forall\; f \,\in\, X_{F}\]
is called the frame operator for \,$\left\{\,f_{\,i}\,\right\}^{\,\infty}_{\,i \,=\, 1}$.\;It can be easily verify that
\begin{equation}\label{eq1.1}
\left<\,S_{F}\,f \,,\, f \,|\, a_{\,2} \,,\, \cdots \,,\, a_{\,n}\,\right> \,=\, \sum\limits^{\infty}_{i \,=\, 1}\,\left|\,\left<\,f \,,\, f_{\,i} \,|\, a_{\,2} \,,\, \cdots \,,\, a_{\,n}\,\right>\,\right|^{\,2}\; \;\forall\; f \,\in\, X_{F}.
\end{equation}
\end{definition}

\begin{theorem}\label{th2.2}
Let \,$\left\{\,f_{\,i}\,\right\}^{\,\infty}_{\,i \,=\, 1}$\; be a frame associated to \,$\left(\,a_{\,2},\, \cdots,\, a_{\,n}\,\right)$\; for \,$X$\, with bounds \;$A \;\&\; B$.\;Then the corresponding frame operator \,$S_{F}$\; is bounded, invertible, self-adjoint and positive.
\end{theorem}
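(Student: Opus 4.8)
The plan is to realise the frame operator as \,$S_{F} \,=\, T_{F}\,T_{F}^{\,\ast}$, which hands us three of the four properties at once, and then to recast the frame inequalities as an operator inequality that forces invertibility.

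First I would verify the factorisation \,$S_{F} \,=\, T_{F}\,T_{F}^{\,\ast}$.\;Applying the formula for \,$T_{F}^{\,\ast}$\; from Theorem (\ref{th2.1}) to \,$f$\, and then feeding the resulting coefficient sequence \,$\left\{\,\left<\,f \,,\, f_{\,i} \,|\, a_{\,2} \,,\, \cdots \,,\, a_{\,n}\,\right>\,\right\}_{i \,=\, 1}^{\,\infty}$\; into \,$T_{F}$\; reproduces exactly the defining sum for \,$S_{F}\,(\,f\,)$.\;Since \,$T_{F}$\; is bounded by Theorem (\ref{th2.1}), so is its adjoint \,$T_{F}^{\,\ast}$, and hence \,$S_{F}$\; is bounded as a composition of bounded operators.\;Self-adjointness is then immediate from \,$\left(\,T_{F}\,T_{F}^{\,\ast}\,\right)^{\,\ast} \,=\, T_{F}\,T_{F}^{\,\ast}$.\;For positivity I would read off equation (\ref{eq1.1}): recalling that the inner product on \,$X_{F}$\; is \,$\left<\,\cdot \,,\, \cdot\,\right>_{F} \,=\, \left<\,\cdot \,,\, \cdot \,|\, a_{\,2} \,,\, \cdots \,,\, a_{\,n}\,\right>$, the right-hand side \,$\sum_{i}\,\left|\,\left<\,f \,,\, f_{\,i} \,|\, a_{\,2} \,,\, \cdots \,,\, a_{\,n}\,\right>\,\right|^{\,2}$\; is a sum of squares, so \,$\left<\,S_{F}\,f \,,\, f\,\right>_{F} \,\geq\, 0$\, and \,$S_{F} \,\geq\, 0$.

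For invertibility I would invoke Theorem (\ref{th2}) to regard \,$\left\{\,f_{\,i}\,\right\}_{i \,=\, 1}^{\,\infty}$\; as a genuine frame for the Hilbert space \,$X_{F}$\; with the same bounds, so that combining the frame inequalities with (\ref{eq1.1}) yields
\[A\, \left<\,f \,,\, f\,\right>_{F} \,\leq\, \left<\,S_{F}\,f \,,\, f\,\right>_{F} \,\leq\, B\, \left<\,f \,,\, f\,\right>_{F}\; \;\forall\; f \,\in\, X_{F}.\]
By the partial order of Theorem (\ref{th1.051}) this reads \,$A\,I_{F} \,\leq\, S_{F} \,\leq\, B\,I_{F}$.\;From the lower bound together with the Cauchy-Schwarz inequality one obtains \,$A\,\|\,f\,\|_{F} \,\leq\, \|\,S_{F}\,f\,\|_{F}$, so \,$S_{F}$\; is bounded below; this makes \,$S_{F}$\; injective with closed range, while self-adjointness forces the range to be dense, and a closed dense subspace is all of \,$X_{F}$.\;Hence \,$S_{F}$\; is bijective and its inverse is bounded.

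The only step requiring genuine care is invertibility.\;The main obstacle is not any single estimate but the need to translate the analytic frame bounds into the operator inequality \,$A\,I_{F} \,\leq\, S_{F} \,\leq\, B\,I_{F}$\; and then to deduce invertibility from it; here one may either run the bounded-below-plus-self-adjoint argument above or, equivalently, note that \,$\left\|\,I_{F} \,-\, B^{\,-\,1}\,S_{F}\,\right\| \,\leq\, 1 \,-\, A\,/\,B \,<\, 1$\; and invert \,$B^{\,-\,1}\,S_{F}$\; by a Neumann series.\;Everything else is a direct reading of the factorisation \,$S_{F} \,=\, T_{F}\,T_{F}^{\,\ast}$\; and of (\ref{eq1.1}).
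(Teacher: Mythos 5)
Your proposal is correct and follows essentially the same route as the paper: the factorisation \,$S_{F} \,=\, T_{F}\,T_{F}^{\,\ast}$\, for self-adjointness, equation (\ref{eq1.1}) together with the partial order of Theorem (\ref{th1.051}) to get \,$A\,I_{F} \,\leq\, S_{F} \,\leq\, B\,I_{F}$, and positivity read off from that inequality. The only cosmetic difference is boundedness, which the paper proves by a direct estimate of \,$\left\|\,S_{F}\,f\,\right\|_{F}$\, via the sup-characterisation of the \,$n$-norm and Cauchy--Schwarz, whereas you get it for free from the factorisation; both are fine. Where you genuinely improve on the paper is invertibility: the paper simply asserts that \,$S_{F}$\, ``is positive and consequently it is invertible,'' which as stated is a non sequitur (positivity alone never forces invertibility), while you correctly extract it from the lower bound \,$A\,I_{F} \,\leq\, S_{F}$\, with \,$A \,>\, 0$, either by the bounded-below-plus-self-adjoint argument (injective, closed range, dense range, hence bijective with bounded inverse) or by the Neumann-series estimate \,$\left\|\,I_{F} \,-\, B^{\,-\,1}\,S_{F}\,\right\| \,\leq\, 1 \,-\, A\,/\,B \,<\, 1$; either one supplies the justification the paper omits.
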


\begin{proof}
For each \,$f \,\in\, X_{F}$, we have 
\[\left\|\,S_{F}\,f \,\right\|_{F}^{\,2} \;=\; \left\|\,S_{F}\,f \;,\; a_{\,2} \;,\; \cdots \;,\; a_{\,n}\,\,\right\|^{\,2}\]
\[=\, \sup\,\left\{\,\left|\,\left<\,S_{F}\,f \,,\, g \;|\; a_{\,2} \,,\, \cdots \,,\, a_{\,n}\,\right>\,\right|^{\,2} \,:\, \left\|\,g \,,\, a_{\,2} \,,\, \cdots \,,\, a_{\,n}\,\right\| \,=\, 1 \,\right\}\hspace{3.5cm}\]
\[=\, \sup\,\left\{\,\left|\,\left<\,\sum\limits^{\infty}_{i \,=\, 1}\,\left<\,f \,,\, f_{\,i} \,|\, a_{\,2} \,,\, \cdots \,,\, a_{\,n} \,\right>\,f_{\,i} \;,\; g \;|\; a_{\,2} \,,\, \cdots \,,\, a_{\,n}\,\right>\,\right|^{\,2} \,:\,  \left\|\,g \,,\, a_{\,2} \,,\, \cdots \,,\, a_{\,n}\,\right\| \,=\, 1\,\right\}\]
\[\leq\, \sup\,\left\{\,\sum\limits^{\infty}_{i \,=\, 1}\, \left|\, \left<\,f \,,\, f_{\,i} \,|\, a_{\,2} \,,\, \cdots \,,\, a_{\,n} \,\right>\,\right|^{\,2}\,\sum\limits^{\infty}_{i \,=\, 1}\; \left|\,\left<\,g \,,\, f_{\,i} \,|\, a_{\,2} \,,\, \cdots \,,\, a_{\,n}\,\right>\,\right|^{\,2} \,:\, \left \|\,g \,,\, a_{\,2} \,,\, \cdots \,,\, a_{\,n}\,\right\| \,=\, 1 \,\right\}\]
\[\hspace{6cm}[\;\text{using Cauchy-Schwarz iequality}\;]\]
\[\leq\; B\; \left \|\,f \,,\, a_{\,2} \,,\, \cdots \,,\, a_{\,n}\,\right\|^{\,2}\;  B\; \;[\;\text{since}\; \left\{\,f_{\,i}\,\right\}^{\,\infty}_{\,i \,=\, 1}\; \;\text{is a frame associated to \,$\left(\,a_{\,2},\, \cdots,\, a_{\,n}\,\right)$}\;\;]\hspace{4cm}\]
\[ =\, B^{\,2}\; \left \|\,f \,,\, a_{\,2} \,,\, \cdots \,,\, a_{\,n}\,\right\|^{\,2}\;=\; B^{\,2}\; \|\,f\,\|^{\,2}_{F}.\hspace{6.9cm}\]
This shows that \,$S_{F}$\; is bounded.\;Since \,$S_{F} \,=\, T_{F}\,T^{\,\ast}_{F}$, it is easy to verify that \,$S_{F}$\, is self-adjoint.\;By (\ref{eq1.1}), frame inequality of definition (\ref{def0.1}), can be written as 
\[A\,\left<\,f \,,\, f \,|\, a_{\,2} \,,\, \cdots \,,\, a_{\,n}\,\right> \,\leq\, \left<\,S_{F}\,f \,,\, f \,|\, a_{\,2} \,,\, \cdots \,,\, a_{\,n}\,\right> \,\leq\, B\,\left<\,f \,,\, f \,|\, a_{\,2} \,,\, \cdots \,,\, a_{\,n}\,\right>\]
and therefore according to Theorem (\ref{th1.051}), we can write \,$A\,I_{F} \,\leq\, S_{F} \,\leq\, B\,I_{F}$.\;Thus, \,$S_{F}$\; is positive and consequently it is invertible.    
\end{proof}

\begin{remark}\label{rmr1}
In Theorem (\ref{th2.2}), it is proved that \,$A\,I_{F} \,\leq\, S_{F} \,\leq\, B\,I_{F}$.\;Since \,$S^{\,-1}_{F}$\; commutes with both \,$S_{F}$\; and \,$I_{F}$, multiplying in the inequality, \,$ A\,I_{F} \,\leq\, S_{F} \,\leq\, B\,I_{F}$\, by \,$S^{\,-1}_{F}$, we get \,$B^{\,-1}\,I_{F} \,\leq\, S^{\,-1}_{F} \,\leq\, A^{\,-1}\,I_{F}$.
\end{remark}

\begin{theorem}\label{th2.3}
Let \,$\{\,f_{\,i}\,\}^{\infty}_{i \,=1\, }$\; be a frame associated to \,$\left(\,a_{\,2},\, \cdots,\, a_{\,n}\,\right)$\; for \,$X$\, with frame bounds \,$A,\, B$\, and \,$S_{F}$\; be the corresponding frame operator.\;Then \,$\left \{\,S^{\,-1}_{F}\,f_{\,i}\,\right \}^{\infty}_{i \,=\, 1}$\; is also a frame associated to \,$\left(\,a_{\,2},\, \cdots,\, a_{\,n}\,\right)$\, with bounds \,$B^{\,-1},\, A^{\,-1}$. 
\end{theorem}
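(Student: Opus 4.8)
The plan is to verify the two frame inequalities for \,$\left\{\,S^{\,-1}_{F}\,f_{\,i}\,\right\}^{\infty}_{i \,=\, 1}$\, by computing the associated frame sum \,$\sum_{i}\,\left|\,\left<\,f \,,\, S^{\,-1}_{F}\,f_{\,i} \,|\, a_{\,2} \,,\, \cdots \,,\, a_{\,n}\,\right>\,\right|^{\,2}$\, in closed form and then invoking the operator inequality already recorded in Remark (\ref{rmr1}). All of the heavy lifting (boundedness, self-adjointness, positivity, invertibility of \,$S_{F}$, and the sandwich \,$B^{\,-1}\,I_{F} \,\leq\, S^{\,-1}_{F} \,\leq\, A^{\,-1}\,I_{F}$) is done, so the task is really just an algebraic identity followed by an appeal to Theorem (\ref{th1.051}).

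The key steps, in order, are as follows. First I would note that \,$S_{F}$\, is self-adjoint and invertible by Theorem (\ref{th2.2}), hence \,$S^{\,-1}_{F}$\, is also self-adjoint on \,$X_{F}$; this lets me transfer \,$S^{\,-1}_{F}$\, from the second slot to the first, writing \,$\left<\,f \,,\, S^{\,-1}_{F}\,f_{\,i} \,|\, a_{\,2} \,,\, \cdots \,,\, a_{\,n}\,\right> \,=\, \left<\,S^{\,-1}_{F}\,f \,,\, f_{\,i} \,|\, a_{\,2} \,,\, \cdots \,,\, a_{\,n}\,\right>$\, for every \,$f \,\in\, X_{F}$. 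Summing the squared moduli and applying the identity (\ref{eq1.1}) with \,$g \,=\, S^{\,-1}_{F}\,f$\, in place of \,$f$\, then gives
\[\sum\limits^{\infty}_{i \,=\, 1}\,\left|\,\left<\,f \,,\, S^{\,-1}_{F}\,f_{\,i} \,|\, a_{\,2} \,,\, \cdots \,,\, a_{\,n}\,\right>\,\right|^{\,2} \,=\, \left<\,S_{F}\,S^{\,-1}_{F}\,f \,,\, S^{\,-1}_{F}\,f \,|\, a_{\,2} \,,\, \cdots \,,\, a_{\,n}\,\right> \,=\, \left<\,S^{\,-1}_{F}\,f \,,\, f \,|\, a_{\,2} \,,\, \cdots \,,\, a_{\,n}\,\right>,\]
using \,$S_{F}\,S^{\,-1}_{F} \,=\, I_{F}$.

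To finish, I would feed the operator inequality \,$B^{\,-1}\,I_{F} \,\leq\, S^{\,-1}_{F} \,\leq\, A^{\,-1}\,I_{F}$\, from Remark (\ref{rmr1}) into the defining order relation of Theorem (\ref{th1.051}), evaluated at \,$f$, to sandwich \,$\left<\,S^{\,-1}_{F}\,f \,,\, f \,|\, a_{\,2} \,,\, \cdots \,,\, a_{\,n}\,\right>$\, between \,$B^{\,-1}\,\left<\,f \,,\, f \,|\, a_{\,2} \,,\, \cdots \,,\, a_{\,n}\,\right>$\, and \,$A^{\,-1}\,\left<\,f \,,\, f \,|\, a_{\,2} \,,\, \cdots \,,\, a_{\,n}\,\right>$. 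Since \,$\left<\,f \,,\, f \,|\, a_{\,2} \,,\, \cdots \,,\, a_{\,n}\,\right> \,=\, \left\|\,f \,,\, a_{\,2} \,,\, \cdots \,,\, a_{\,n}\,\right\|^{\,2}$, the chain of inequalities is exactly the frame condition of Definition (\ref{def0.1}) with bounds \,$B^{\,-1}$\, and \,$A^{\,-1}$.

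The only genuinely delicate point I anticipate is the self-adjointness swap in the first step: it must be justified that \,$S^{\,-1}_{F}$\, is self-adjoint with respect to the induced inner product \,$\left<\,\cdot \,,\, \cdot \,|\, a_{\,2} \,,\, \cdots \,,\, a_{\,n}\,\right>$\, on \,$X_{F}$, and that the series manipulations are legitimate because \,$\left\{\,S^{\,-1}_{F}\,f_{\,i}\,\right\}$\, indeed lies in \,$X_{F}$. Both follow cleanly from Theorem (\ref{th2.2}) together with the fact that the inverse of an invertible self-adjoint operator is self-adjoint. As an alternative, one could bypass the direct computation entirely by invoking Theorem (\ref{th2}) to pass to the Hilbert space \,$X_{F}$, apply the classical canonical dual frame result there, and transfer back; but the direct route above keeps the argument self-contained within the \,$n$-Hilbert space formalism.
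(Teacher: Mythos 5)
Your proposal is correct and takes essentially the same approach as the paper: both reduce the two frame inequalities to the operator sandwich \,$B^{\,-1}\,I_{F} \,\leq\, S^{\,-1}_{F} \,\leq\, A^{\,-1}\,I_{F}$\, of Remark (\ref{rmr1}) through the closed-form identity \,$\sum_{i}\,\left|\,\left<\,f \,,\, S^{\,-1}_{F}\,f_{\,i} \,|\, a_{\,2} \,,\, \cdots \,,\, a_{\,n}\,\right>\,\right|^{\,2} \,=\, \left<\,S^{\,-1}_{F}\,f \,,\, f \,|\, a_{\,2} \,,\, \cdots \,,\, a_{\,n}\,\right>$, using self-adjointness of \,$S^{\,-1}_{F}$\, for the adjoint swap. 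The only difference is cosmetic: you obtain this identity by substituting \,$S^{\,-1}_{F}\,f$\, into (\ref{eq1.1}), whereas the paper derives it via equation (\ref{eq2}), i.e.\ by first showing that \,$S^{\,-1}_{F}$\, is the frame operator of \,$\left\{\,S^{\,-1}_{F}\,f_{\,i}\,\right\}^{\infty}_{i \,=\, 1}$, a fact it wants anyway for the canonical-dual-frame remark that follows.
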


\begin{proof}
By Theorem (\ref{th2.2}), \,$S^{\,-1}_{F} \,:\, X_{F} \,\to\, X_{F}$\, is self-adjoint.\;Now, for each \,$f \,\in\, X_{F}$,
\[ \sum\limits^{\infty}_{i \,=\,1}\,\left |\,\left <\,f \,,\, S^{\,-1}_{F}\,f_{\,i} \,|\, a_{\,2} \,,\, \cdots \,,\, a_{\,n}\,\right >\,\right |^{\,2} \,=\, \sum\limits^{\infty}_{i \,=\, 1}\,\left|\,\left <\,\left (\,S^{\,-1}_{F}\,\right )^{\,\ast}\,f \,,\, f_{\,i} \,|\, a_{\,2} \,,\, \cdots \,,\, a_{\,n}\,\right >\,\right |^{\,2} \]
\[\;=\; \sum\limits^{\infty}_{i \,=\, 1}\,\left |\,\left <\,S^{\,-1}_{F}\; f \,,\, f_{\,i} \,|\, a_{\,2} \,,\, \cdots \,,\, a_{\,n}\,\right>\,\right|^{\,2}\]
\[ \,\leq\, B\; \left \|\,S^{\,-1}_{F}\, f \,,\, a_{\,2} \,,\, \cdots \,,\, a_{\,n}\right\|^{\,2}\hspace{1.2cm}\]
\[\hspace{3.5cm} [\;\text{since}\; \,\{\,f_{\,i}\,\}^{\infty}_{i \,=1\, } \;\; \text{is a frame associated to}\;\left(\,a_{\,2} \,,\, \cdots \,,\, a_{\,n}\,\right)\;]\]
\[\leq\, B\, \left \|\,S^{\,-1}_{F}\,\right \|^{\,2}\, \left \|\,f \,,\, a_{\,2} \,,\, \cdots \,,\, a_{\,n}\right \|^{\,2}.\]
This shows that \,$\left\{\,S^{\,-1}_{F}\;f_{\,i}\,\right \}^{\infty}_{i \,=\, 1}$\; is a Bessel sequence associated to \,$\left(\,a_{\,2},\, \cdots,\, a_{\,n}\,\right)$. Also, for any \,$f \,\in\, X_{F}$, we have
\[ \sum\limits^{\infty}_{i \,=\, 1}\,\left <\,f \,,\, S^{\,-1}_{F}\,f_{\,i} \,|\, a_{\,2} \,,\, \cdots \,,\, a_{\,n}\,\,\right >\, S^{\,-1}_{F}\, f_{\,i} \,=\, S^{\,-1}_{F}\,\,\left(\,\sum\limits^{\infty}_{i \,=\, 1}\,\left <\,S^{\,-1}_{F}\, f \,,\, f_{\,i} \,|\, a_{\,2} \,,\, \cdots \,,\, a_{\,n}\,\right >\,f_{\,i}\,\right) \]
\begin{equation}\label{eq2}
\hspace{6.5cm} \;=\; S^{\,-1}_{F}\; \left(\,S_{F}\; \left(\,S^{\,-1}_{F}\;f\,\right) \,\right) \;=\; S^{\,-1}_{F}\;\,f.
\end{equation}
This shows that \,$S^{\,-1}_{F}$\; is the frame operator for \,$\left\{\,S^{\,-1}_{F}\;f_{\,i}\,\right\}_{i \,=\, 1}^{\,\infty}$.\;Now, for each \,$f \,\in\, X_{F}$, using the inequality \,$B^{\,-1}\,I_{F} \,\leq\, S^{\,-1}_{F} \,\leq\, A^{\,-1}\,I_{F}$, we get
\[B^{\,-1}\,\|\,f \,,\, a_{\,2} \,,\, \cdots \,,\, a_{\,n}\,\,\|^{\,2} \,\leq\, \left <\,S^{\,-1}_{F}\,f \,,\, f \;|\;  a_{\,2} \,,\, \cdots \,,\, a_{\,n}\,\right > \,\leq\, A^{\,-1}\,\|\,f \,,\, a_{\,2} \,,\, \cdots \,,\, a_{\,n}\,\,\|^{\,2}.\]Now, using (\ref{eq2}),  
\,$\left <\,S^{\,-1}_{F}\;f \,,\, f \,|\,  a_{\,2} \,,\, \cdots \,,\, a_{\,n}\,\right >$ 
\[\,=\, \left <\,\sum\limits^{\infty}_{i \,=\, 1}\,\left <\,f \,,\, S^{\,-1}_{F}\;f_{\,i} \,|\,  a_{\,2} \,,\, \cdots \,,\, a_{\,n}\;\right >\,S^{\,-1}_{F}\;f_{\,i} \;,\; f \,|\, a_{\,2} \,,\, \cdots \,,\, a_{\,n}\, \right >\]
\[\;=\;\sum\limits^{\infty}_{i \,=\, 1}\,\left <\,f \,,\, S^{\,-1}_{F}\;f_{\,i} \,|\,  a_{\,2} \,,\, \cdots \,,\, a_{\,n}\;\right >\; \left <\,S^{\,-1}_{F}\;f_{\,i} \,,\, f \,|\,  a_{\,2} \,,\, \cdots \,,\, a_{\,n}\,\right >\hspace{.2cm}\]
\[\;=\; \sum\limits^{\infty}_{i \,=\, 1}\; \left|\,\left <\,f \,,\, S^{\,-1}_{F}\;f_{\,i} \,|\,  a_{\,2} \,,\, \cdots \,,\, a_{\,n}\,\right >\,\right|^{\,2}.\hspace{3.8cm}\]
Therefore for each \,$f \,\in\, X_{F}$,
\[ B^{\,-1}\,\|\,f \,,\, a_{\,2} \,,\, \cdots \,,\, a_{\,n}\,\,\|^{\,2} \,\leq\, \sum\limits^{\infty}_{i \,=\, 1}\, \left|\,\left <\,f \,,\, S^{\,-1}_{F}\, f_{\,i} \,|\,  a_{\,2} \,,\, \cdots \,,\, a_{\,n}\,\right >\,\right|^{\,2} \,\leq\, A^{\,-1}\,\|\,f \,,\, a_{\,2} \,,\, \cdots \,,\, a_{\,n}\,\,\|^{\,2}.\]
Hence, by Theorem (\ref{th2}), \,$\left\{\,S^{\,-1}_{F}\;f_{\,i}\,\right\}_{i \,=\, 1}^{\,\infty}$\; is a frame associated to \,$\left(\,a_{\,2},\, \cdots,\, a_{\,n}\,\right)$\; for \,$X$\, with bounds \,$B^{\,-1},\, A^{\,-1}$.                    
\end{proof}

\begin{remark}
From the Theorem (\ref{th2.3}), we can conclude that if \,$\left\{\,f_{\,i}\,\right\}^{\infty}_{i \,=\, 1}$\; is a frame associated to \,$\left(\,a_{\,2},\, \cdots,\, a_{\,n}\,\right)$\, with optimal frame bounds \,$A,\, B$, then \,$B^{\,-1},\, A^{\,-1}$\; are also optimal frame bounds for \,$\left\{\,S^{\,-1}_{F}\;f_{\,i}\,\right\}_{i \,=\, 1}^{\,\infty}$.\;The frame \,$\left\{\,S^{\,-1}_{F}\;f_{\,i}\,\right\}_{i \,=\, 1}^{\,\infty}$\; is called the canonical dual frame associated to \,$\left(\,a_{\,2},\, \cdots,\, a_{\,n}\,\right)$\, of \,$\left\{\,f_{\,i}\,\right\}^{\infty}_{i \,=\, 1}$.
\end{remark}

\begin{theorem}\label{eq2.4}
Let \,$\left\{\,f_{\,i}\,\right\}^{\infty}_{i \,=1\, }$\; be a frame associated to \,$\left(\,a_{\,2},\, \cdots,\, a_{\,n}\,\right)$\, for \,$X$\, and \,$S_{F}$\, be the corresponding frame operator.\;Then for every \,$f \,\in\, X_{F}$,
\[ f \;=\; \sum\limits^{\infty}_{i \,=\, 1}\,\left <\,f \;,\; S^{\,-1}_{F}\;f_{\,i} \;|\;  a_{\,2} \;,\; \cdots \;,\; a_{\,n}\;\right >\;f_{\,i}, \;\text{and}\] 
\[ f \;=\; \sum\limits^{\infty}_{i \,=\, 1}\,\left <\,f \;,\; f_{\,i} \;|\; a_{\,2} \;,\; \cdots \;,\; a_{\,n}\;\right >\;S^{\,-1}_{F}\;f_{\,i},\]
provided both the series converges unconditionally for all \,$f \,\in\, X_{F}$.
\end{theorem}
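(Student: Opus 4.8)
The plan is to exploit the invertibility of the frame operator \,$S_{F}$, established in Theorem (\ref{th2.2}), together with the self-adjointness of \,$S_{F}$\, and \,$S^{\,-1}_{F}$.\;Since \,$S_{F}$\, is invertible on \,$X_{F}$, every \,$f \,\in\, X_{F}$\, admits the two trivial factorizations \,$f \,=\, S_{F}\,\left(\,S^{\,-1}_{F}\,f\,\right)$\, and \,$f \,=\, S^{\,-1}_{F}\,\left(\,S_{F}\,f\,\right)$; I would produce each of the two reconstruction formulas by expanding the inner occurrence of \,$S_{F}$\, through its defining series from Definition (\ref{def1}).

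For the first identity I would apply the definition of \,$S_{F}$\, to the vector \,$S^{\,-1}_{F}\,f$, obtaining
\[f \,=\, S_{F}\left(\,S^{\,-1}_{F}\,f\,\right) \,=\, \sum\limits^{\infty}_{i \,=\, 1}\,\left<\,S^{\,-1}_{F}\,f \,,\, f_{\,i} \,|\, a_{\,2} \,,\, \cdots \,,\, a_{\,n}\,\right>\,f_{\,i}.\]
Because \,$S^{\,-1}_{F}$\, is self-adjoint (Theorem (\ref{th2.2}), as already used in Theorem (\ref{th2.3})), each coefficient rewrites as \,$\left<\,S^{\,-1}_{F}\,f \,,\, f_{\,i} \,|\, a_{\,2} \,,\, \cdots \,,\, a_{\,n}\,\right> \,=\, \left<\,f \,,\, S^{\,-1}_{F}\,f_{\,i} \,|\, a_{\,2} \,,\, \cdots \,,\, a_{\,n}\,\right>$, which delivers the first formula immediately.

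For the second identity I would instead expand \,$S_{F}\,f$\, by its definition and then apply the bounded operator \,$S^{\,-1}_{F}$:
\[f \,=\, S^{\,-1}_{F}\left(\,S_{F}\,f\,\right) \,=\, S^{\,-1}_{F}\left(\,\sum\limits^{\infty}_{i \,=\, 1}\,\left<\,f \,,\, f_{\,i} \,|\, a_{\,2} \,,\, \cdots \,,\, a_{\,n}\,\right>\,f_{\,i}\,\right).\]
Since \,$S^{\,-1}_{F} \,\in\, \mathcal{B}\,(\,X_{F}\,)$\, is continuous, it commutes with the (unconditionally convergent) infinite sum and may be carried term by term inside the series, yielding \,$f \,=\, \sum\limits^{\infty}_{i \,=\, 1}\,\left<\,f \,,\, f_{\,i} \,|\, a_{\,2} \,,\, \cdots \,,\, a_{\,n}\,\right>\,S^{\,-1}_{F}\,f_{\,i}$.

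The only delicate point is this interchange of the bounded operator with the infinite series, and that is exactly where the hypothesis of unconditional convergence enters.\;Since \,$\left\{\,f_{\,i}\,\right\}^{\,\infty}_{\,i \,=\, 1}$\, is a frame, hence a Bessel sequence, and the coefficient sequence \,$\left\{\,\left<\,f \,,\, f_{\,i} \,|\, a_{\,2} \,,\, \cdots \,,\, a_{\,n}\,\right>\,\right\}$\, lies in \,$l^{\,2}\,(\,\mathbb{N}\,)$, the partial sums converge in \,$X_{F}$; continuity of \,$S^{\,-1}_{F}$\, then transports this convergence through the operator and legitimizes the term-by-term passage.\;The hard part will be purely notational: keeping the \,$n$-inner product slots \,$a_{\,2},\, \cdots,\, a_{\,n}$\, intact through each manipulation, rather than any genuine analytic difficulty.
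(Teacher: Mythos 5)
Your proposal is correct, and its second half coincides exactly with the paper's argument: both factor \,$f \,=\, S^{\,-1}_{F}\,\left(\,S_{F}\,f\,\right)$, expand \,$S_{F}\,f$\, by its defining series, and pull the bounded operator \,$S^{\,-1}_{F}$\, through the convergent series by continuity. Where you genuinely differ is in the first reconstruction formula. The paper also starts from \,$f \,=\, S_{F}\,\left(\,S^{\,-1}_{F}\,f\,\right)$, but instead of expanding via the definition of \,$S_{F}$, it invokes equation (\ref{eq2}) from Theorem (\ref{th2.3}) — the fact that \,$S^{\,-1}_{F}$\, is the frame operator of the dual family \,$\left\{\,S^{\,-1}_{F}\,f_{\,i}\,\right\}$, so that \,$S^{\,-1}_{F}\,f \,=\, \sum_{i}\,\left<\,f \,,\, S^{\,-1}_{F}\,f_{\,i} \,|\, a_{\,2},\, \cdots,\, a_{\,n}\,\right>\,S^{\,-1}_{F}\,f_{\,i}$\, — and then applies \,$S_{F}$\, term by term, using \,$S_{F}\,\left(\,S^{\,-1}_{F}\,f_{\,i}\,\right) \,=\, f_{\,i}$. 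You instead apply Definition (\ref{def1}) directly at the vector \,$S^{\,-1}_{F}\,f$, obtaining coefficients \,$\left<\,S^{\,-1}_{F}\,f \,,\, f_{\,i} \,|\, \cdots\,\right>$, and convert them to \,$\left<\,f \,,\, S^{\,-1}_{F}\,f_{\,i} \,|\, \cdots\,\right>$\, by self-adjointness of \,$S^{\,-1}_{F}$. Your route is slightly more economical: for the first identity it needs no appeal to Theorem (\ref{th2.3}) and no operator-through-series interchange at all (rewriting the scalar coefficients termwise changes nothing about convergence), whereas the paper's route reuses the dual-frame identity it has already established at the cost of one extra interchange. Both interchanges that do occur are legitimate for the reason you state: the coefficient sequences lie in \,$l^{\,2}\,(\,\mathbb{N}\,)$, the Bessel property (Theorem (\ref{th2.1})) gives convergence of the series in \,$X_{F}$, and a bounded operator preserves norm convergence.
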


\begin{proof}
Let \,$f \,\in\, X_{F}$.\;Then 
\[f \,=\, S_{F}\, S^{\,-1}_{F}\,f \,=\, S_{F}\, \left(\,\sum\limits^{\infty}_{i \,=\, 1}\,\left <\,f \,,\, S^{\,-1}_{F}\,f_{\,i} \,|\, a_{\,2} \,,\, \cdots \,,\, a_{\,n}\,\,\right >\,S^{\,-1}_{F}\,f_{\,i}\,\right)\; [\;\text{using (\ref{eq2})}\;]\]
\[\hspace{.1cm} \;=\; \sum\limits^{\infty}_{i \,=\, 1}\,\left <\,f \,,\, S^{\,-1}_{F}\,f_{\,i} \,|\, a_{\,2} \,,\, \cdots \,,\, a_{\,n}\,\,\right >\,S_{F}\,\left(\,S^{\,-1}_{F}\,f_{\,i}\,\right)\]
\[\;=\; \sum\limits^{\infty}_{i \,=\, 1}\,\left <\,f \,,\, S^{\,-1}_{F}\,f_{\,i} \,|\, a_{\,2} \,,\, \cdots \,,\, a_{\,n}\,\,\right >\,f_{\,i}.\hspace{1.5cm}\]
Since \,$\left \{\,\left <\,f \,,\, S^{\,-1}_{F}\,f_{\,i} \,|\, a_{\,2} \,,\, \cdots \,,\, a_{\,n}\,\,\right >\,\right \}^{\infty}_{i \,=\, 1} \,\in\, l^{\,2} (\,\mathbb{N}\,)$\; and \,$\left\{\,f_{\,i}\,\right\}^{\infty}_{i \,=\, 1}$\; is a Bessel sequence associated to \,$\left(\,a_{\,2},\, \cdots,\, a_{\,n}\,\right)$, the above series converges unconditionally.\;On the other hand, 
\[f \,=\, S^{\,-1}_{F}\, S_{F}\, f \,=\, S^{\,-1}_{F}\, \left(\,\sum\limits^{\infty}_{i \,=\, 1}\,\left <\,f \,,\, f_{\,i} \,|\, a_{\,2} \,,\, \cdots \,,\, a_{\,n}\,\,\right >\,f_{\,i}\,\right)\hspace{1cm}\]
\[\hspace{3cm}\;=\; \sum\limits^{\infty}_{i \,=\, 1}\,\left <\,f \,,\, f_{\,i} \,|\,  a_{\,2} \,,\, \cdots \,,\, a_{\,n}\,\right >\;S^{\,-1}_{F} f_{\,i}\; \;\forall\; f \,\in\, X_{F}.\]
\end{proof}

\begin{definition}\label{def2}
A sequence \,$\left\{\,f_{\,i}\,\right\}^{\infty}_{i \,=1\, }$\, in \,$X$\, is said to be a tight frame associated to \,$\left(\,a_{\,2} ,\, \cdots,\, a_{\,n}\,\right)$\, for \,$X$\, with bound \,$A$\, if for all \,$f \,\in\, X$,
\begin{equation}\label{eq2.5}
\sum\limits^{\infty}_{i \,=\, 1}\,|\,\left<\,f \,,\, f_{\,i} \,|\, a_{\,2} \,,\, \cdots \,,\, a_{\,n}\,\right>\,|^{\,2} \,=\, A\,\left\|\,f \,,\, a_{\,2} \,,\, \cdots \,,\, a_{\,n}\,\right\|^{\,2}.
\end{equation}
If \,$A \,=\, 1$\, then it is called normalized tight frame associated to \,$\left(\,a_{\,2},\, \cdots,\, a_{\,n}\,\right)$. From (\ref{eq2.5}), we have
\[ \sum\limits^{\infty}_{i \,=\, 1}\,\left |\, \left <\, f \,,\,  \dfrac{1}{\,\sqrt{A}}\;f_{\,i} \,|\, a_{\,2} \,,\, \cdots \,,\, a_{\,n}\,\right>\,\right |^{\,2} \,=\, \left\|\,f \,,\, a_{\,2} \,,\, \cdots \,,\, a_{\,n}\,\right\|^{\,2}.\]
Therefore, if \,$\left\{\,f_{\,i}\,\right\}_{i \,=\,1}^{\infty}$\; is a tight frame associated to \,$\left(\,a_{\,2},\, \cdots,\, a_{\,n}\,\right)$\, with bound \,$A$\; then family \,$\left\{\,\dfrac{1}{\,\sqrt{A}}\;f_{\,i}\,\right\}^{\infty}_{i \,=\, 1}$\; is a normalized tight frame associated to \,$\left(\,a_{\,2},\, \cdots,\, a_{\,n}\,\right)$\,. According to Theorem (\ref{th2}), \,$\left\{\,f_{\,i}\,\right\}_{i \,=\,1}^{\infty}$\; is a tight frame associated to \,$\left(\,a_{\,2},\, \cdots,\, a_{\,n}\,\right)$\, for \,$X$\, with bound \,$A$\; if and only if it is a tight frame for \,$X_{F}$\, with bound \,$A$.                                                                                      
\end{definition}

\begin{theorem}
Let \,$\left\{\,f_{\,i}\,\right\}^{\infty}_{i \,=\, 1}$\, be a tight frame associated to \,$\left(\,a_{\,2},\, \cdots,\, a_{\,n}\,\right)$\, for \,$X$\, with frame bound \,$A$.\;Then for every \,$f \,\in\, X_{F}$,
\[ f \;=\; \dfrac{1}{A}\; \sum\limits^{\infty}_{i \,=\, 1}\, \left <\,f \,,\, f_{\,i} \,|\, a_{\,2} \,,\, \cdots \,,\, a_{\,n}\, \right >\,f_{\,i}.\]
\end{theorem}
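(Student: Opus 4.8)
The plan is to show that the tight-frame hypothesis forces the frame operator to be a scalar multiple of the identity, namely \,$S_F \,=\, A\,I_F$, after which the reconstruction formula drops out immediately from the definition of \,$S_F$.\;Theorem (\ref{th2.2}) already supplies that \,$S_F$\, is a bounded, self-adjoint, positive, invertible operator on the Hilbert space \,$X_F$, so only its exact value remains to be pinned down.

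First I would feed the tight-frame identity (\ref{eq2.5}) into the quadratic-form identity (\ref{eq1.1}).\;For every \,$f \,\in\, X_F$\, this gives
\[
\left<\,S_F\,f \,,\, f \,|\, a_{\,2} \,,\, \cdots \,,\, a_{\,n}\,\right> \,=\, \sum\limits^{\infty}_{i \,=\, 1}\,\left|\,\left<\,f \,,\, f_{\,i} \,|\, a_{\,2} \,,\, \cdots \,,\, a_{\,n}\,\right>\,\right|^{\,2} \,=\, A\,\left\|\,f \,,\, a_{\,2} \,,\, \cdots \,,\, a_{\,n}\,\right\|^{\,2},
\]
and the rightmost term equals \,$A\,\left<\,f \,,\, f \,|\, a_{\,2} \,,\, \cdots \,,\, a_{\,n}\,\right> \,=\, \left<\,A\,I_F\,f \,,\, f \,|\, a_{\,2} \,,\, \cdots \,,\, a_{\,n}\,\right>$.\;Consequently \,$\left<\,(\,S_F \,-\, A\,I_F\,)\,f \,,\, f\,\right>_F \,=\, 0$\, for all \,$f \,\in\, X_F$, where \,$\left<\,\cdot \,,\, \cdot\,\right>_F$\, is the inner product of \,$X_F$.\;Since \,$S_F \,-\, A\,I_F$\, is self-adjoint on \,$X_F$\, (both summands are), the vanishing of its quadratic form forces \,$S_F \,-\, A\,I_F \,=\, 0$, i.e. \,$S_F \,=\, A\,I_F$.

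With this in hand the conclusion is routine: for each \,$f \,\in\, X_F$\, the defining expression of \,$S_F$\, in Definition (\ref{def1}) reads
\[
\sum\limits^{\infty}_{i \,=\, 1}\, \left<\,f \,,\, f_{\,i} \,|\, a_{\,2} \,,\, \cdots \,,\, a_{\,n}\, \right>\,f_{\,i} \,=\, S_F\,f \,=\, A\,f,
\]
and dividing through by \,$A$\, produces the stated identity.\;I expect the only subtle point to be the passage from \,$\left<\,(\,S_F \,-\, A\,I_F\,)\,f \,,\, f\,\right>_F \,=\, 0$\, to \,$S_F \,-\, A\,I_F \,=\, 0$: over a real scalar field this genuinely requires the self-adjointness of \,$S_F \,-\, A\,I_F$\, (from Theorem (\ref{th2.2})) together with the polarization identity, whereas over a complex field the quadratic form alone already forces the operator to vanish.\;A second, essentially equivalent route would note that \,$S_F \,=\, A\,I_F$\, yields \,$S_F^{\,-1} \,=\, A^{\,-1}\,I_F$, so that \,$S_F^{\,-1}\,f_{\,i} \,=\, \frac{1}{A}\,f_{\,i}$, and then apply the first reconstruction formula of Theorem (\ref{eq2.4}) directly; both routes terminate in the same computation.
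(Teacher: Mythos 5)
Your proposal is correct and follows essentially the same route as the paper: both feed the tight-frame identity into the quadratic-form identity (\ref{eq1.1}) to conclude \,$\left<\,(\,S_F \,-\, A\,I_F\,)\,f \,,\, f \,|\, a_{\,2},\, \cdots,\, a_{\,n}\,\right> \,=\, 0$\, for all \,$f$, deduce \,$S_F \,=\, A\,I_F$, and then read off the reconstruction formula from the definition of \,$S_F$. If anything, you are slightly more careful than the paper, which asserts the implication from the vanishing quadratic form to \,$S_F \,=\, A\,I_F$\, without comment, whereas you correctly note that this step uses self-adjointness (or, over \,$\mathbb{C}$, polarization).
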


\begin{proof}
Since \,$\left\{\,f_{\,i}\,\right\}^{\infty}_{i \,=\, 1}$\, is a tight frame associated to \,$\left(\,a_{\,2},\, \cdots,\, a_{\,n}\,\right)$\, with bound \,$A$,
\[\sum\limits^{\infty}_{i \,=\, 1}\,|\,\left<\,f \,,\, f_{\,i} \,|\, a_{\,2} \,,\, \cdots \,,\, a_{\,n}\,\right>\,|^{\,2} \,=\, A\,\left\|\,f \,,\, a_{\,2} \,,\, \cdots \,,\, a_{\,n}\,\right\|^{\,2}\; \;\forall\; f \,\in\, X_{F}.\]   
Let \,$S_{F}$\, be the corresponding frame operator for \,$\left\{\,f_{\,i}\,\right\}^{\infty}_{i \,=\, 1}$, then by (\ref{eq1.1}),
\[\left <\,S_{F}\,f \,,\, f \,|\, a_{\,2} \,,\, \cdots \,,\, a_{\,n} \,\right > \,=\, \sum\limits^{\infty}_{k \,=\, 1}\,\left|\,\left<\,f \,,\, f_{\,i} \,|\, a_{\,2} \,,\, \cdots \,,\, a_{\,n} \,\right> \,\right|^{\,2}\hspace{3cm}\] 
\[ \hspace{4cm}\,=\, A\, \left\|\,f \,,\, a_{\,2} \,,\, \cdots \,,\, a_{\,n}\,\right\|^{\,2} \,=\, \left <\,A\,f \,,\, f \,|\, a_{\,2} \,,\, \cdots \,,\, a_{\,n}\,\right>\]
\[ \Rightarrow\, \left <\,\left(\,S_{F} \,-\, A\, I_{F}\,\right) f \,,\, f \,|\, a_{\,2} \,,\, \cdots \,,\, a_{\,n}\,\right > \,=\, 0\; \;\;\forall\; f \,\in\, X_{F}\; \Rightarrow\, S_{F} \,=\, A\, I_{F}.\hspace{1cm}\] Therefore, for \,$f \,\in\, X_{F}$, we get
\[ A\,f \,=\, S_{F}\,(\,f\,) \,=\, \sum\limits^{\infty}_{i \,=\, 1}\; \left<\,f \,,\, f_{\,i} \,|\, a_{\,2} \,,\, \cdots \,,\, a_{\,n} \,\right>\,f_{\,i}\]
\[\Rightarrow\, f \,=\, \dfrac{1}{A}\, \sum\limits^{\infty}_{i \,=\, 1}\, \left<\,f \,,\, f_{\,i} \,|\, a_{\,2} \,,\, \cdots \,,\, a_{\,n} \,\right>\,f_{\,i}.\] 
\end{proof}

\begin{theorem}
Let \,$\{\,f_{\,i}\,\}^{\infty}_{i \,=\, 1}$\; be a frame associated to \,$\left(\,a_{\,2},\, \cdots,\, a_{\,n}\,\right)$\, for \,$X$\, and \,$S_{F}$\, be the corresponding frame operator.\;Then \,$\left \{\,S_{F}^{\,-\,\dfrac{1}{2}}\,f_{\,i}\,\right \}^{\infty}_{i \,=\, 1}$\; is a normalized tight frame associated to \,$\left(\,a_{\,2},\, \cdots,\, a_{\,n}\,\right)$\; and furthermore, for each \,$f \,\in\, X_{F}$, 
\[ f \,=\, \sum\limits^{\infty}_{i \,=\, 1}\,\left <\,f \;,\; S_{F}^{\,-\,\dfrac{1}{2}}\;f_{\,i} \;|\;  a_{\,2} \;,\; \cdots \;,\; a_{\,n}\;\right >\;S_{F}^{\,-\,\dfrac{1}{2}}\;f_{\,i}.\]
\end{theorem}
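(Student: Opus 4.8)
The plan is to prove that the frame operator of the new sequence $\left\{\,S_{F}^{\,-\,1/2}\,f_{\,i}\,\right\}_{i \,=\, 1}^{\infty}$ equals the identity $I_{F}$, which by the identity (\ref{eq1.1}) forces the Parseval (normalized tight) bound $A \,=\, 1$, and then to read off the reconstruction formula from the tight-frame expansion theorem proved just above, specialized to $A \,=\, 1$.

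First I would collect the operator facts about $S_{F}^{\,-\,1/2}$. By Theorem (\ref{th2.2}), $S_{F}$ is a positive, self-adjoint, invertible operator on $X_{F}$, and hence so is $S_{F}^{\,-1}$ (its positivity is recorded in Remark (\ref{rmr1})). Theorem (\ref{th1.05}) then guarantees that the positive square root $S_{F}^{\,-\,1/2}$ of $S_{F}^{\,-1}$ exists, is self-adjoint, satisfies $\left(\,S_{F}^{\,-\,1/2}\,\right)^{2} \,=\, S_{F}^{\,-1}$, and commutes with every bounded operator that commutes with $S_{F}^{\,-1}$; in particular it commutes with $S_{F}$.

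The core computation fixes $f \,\in\, X_{F}$ and evaluates the Bessel-type sum for the new sequence. Using self-adjointness of $S_{F}^{\,-\,1/2}$ to transfer it to the first slot,
\[\left<\,f \,,\, S_{F}^{\,-\,1/2}\,f_{\,i} \,|\, a_{\,2} \,,\, \cdots \,,\, a_{\,n}\,\right> \,=\, \left<\,S_{F}^{\,-\,1/2}\,f \,,\, f_{\,i} \,|\, a_{\,2} \,,\, \cdots \,,\, a_{\,n}\,\right>,\]
and then applying (\ref{eq1.1}) with $S_{F}^{\,-\,1/2}\,f$ in place of $f$, I obtain
\[\sum\limits^{\infty}_{i \,=\, 1}\,\left|\,\left<\,f \,,\, S_{F}^{\,-\,1/2}\,f_{\,i} \,|\, a_{\,2} \,,\, \cdots \,,\, a_{\,n}\,\right>\,\right|^{\,2} \,=\, \left<\,S_{F}\,S_{F}^{\,-\,1/2}\,f \,,\, S_{F}^{\,-\,1/2}\,f \,|\, a_{\,2} \,,\, \cdots \,,\, a_{\,n}\,\right>.\]
Transferring one factor back by self-adjointness and collapsing $S_{F}^{\,-\,1/2}\,S_{F}\,S_{F}^{\,-\,1/2} \,=\, S_{F}\,\left(\,S_{F}^{\,-\,1/2}\,\right)^{2} \,=\, S_{F}\,S_{F}^{\,-1} \,=\, I_{F}$ via the commutativity noted above, the right-hand side reduces to $\left<\,f \,,\, f \,|\, a_{\,2} \,,\, \cdots \,,\, a_{\,n}\,\right> \,=\, \left\|\,f \,,\, a_{\,2} \,,\, \cdots \,,\, a_{\,n}\,\right\|^{\,2}$. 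This is precisely (\ref{eq2.5}) with $A \,=\, 1$, so $\left\{\,S_{F}^{\,-\,1/2}\,f_{\,i}\,\right\}_{i \,=\, 1}^{\infty}$ is a normalized tight frame associated to $\left(\,a_{\,2},\, \cdots,\, a_{\,n}\,\right)$.

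For the expansion I would simply apply the tight-frame reconstruction theorem established immediately above to the normalized tight frame $\left\{\,S_{F}^{\,-\,1/2}\,f_{\,i}\,\right\}_{i \,=\, 1}^{\infty}$ with bound $A \,=\, 1$, which yields the displayed identity $f \,=\, \sum_{i}\left<\,f \,,\, S_{F}^{\,-\,1/2}\,f_{\,i} \,|\, a_{\,2} \,,\, \cdots \,,\, a_{\,n}\,\right>\,S_{F}^{\,-\,1/2}\,f_{\,i}$. The only point demanding care is the operator bookkeeping in the core computation: each move of $S_{F}^{\,-\,1/2}$ across the $n$-inner product must be justified by its self-adjointness, and the collapse to $I_{F}$ genuinely relies on the commutativity of $S_{F}^{\,-\,1/2}$ with $S_{F}$ furnished by Theorem (\ref{th1.05}); once these are secured, everything else is direct substitution into results already in hand.
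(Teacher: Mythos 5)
Your proposal is correct, but it runs in the opposite direction from the paper's own proof, so the two are worth contrasting. The paper establishes the reconstruction formula \emph{first}, by a direct factorization: it writes $f = S_{F}^{-1/2}\,S_{F}\,\bigl(\,S_{F}^{-1/2}\,f\,\bigr)$, expands the middle $S_{F}$ as its defining series $\sum_{i}\bigl<\,S_{F}^{-1/2}f \,,\, f_{i} \,|\, a_{2},\cdots,a_{n}\,\bigr>\,f_{i}$, pushes the bounded operator $S_{F}^{-1/2}$ inside the sum, and moves it across the $n$-inner product by self-adjointness; it then derives the Parseval identity as a consequence, by substituting this expansion into $\left<\,f \,,\, f \,|\, a_{2},\cdots,a_{n}\,\right>$ and expanding term by term. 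You instead prove tightness first, working entirely at the level of the quadratic form: identity (\ref{eq1.1}) evaluated at $S_{F}^{-1/2}f$, followed by the collapse $S_{F}^{-1/2}\,S_{F}\,S_{F}^{-1/2} = I_{F}$, gives the Parseval identity without ever manipulating an infinite series; the expansion is then imported from the preceding tight-frame reconstruction theorem with $A = 1$. Your route buys a cleaner tightness argument (no interchange of an operator with an infinite sum needs justification), at the cost of self-containedness: the expansion is obtained by citation, so the series bookkeeping is delegated to the quoted theorem, whose proof shows the frame operator of a tight frame with bound $A$ equals $A\,I_{F}$. Both arguments ultimately rest on the same operator facts from Theorem (\ref{th1.05}) --- existence, self-adjointness, and commutativity with $S_{F}$ of the positive square root $S_{F}^{-1/2}$ --- and both inherit the paper's standing identification of sequences in $X$ with their images in $X_{F}$, so neither is more vulnerable on that point.
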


\begin{proof}
By Theorem (\ref{th1.05}), a unique positive square root \,$S^{\,-\, \dfrac{1}{2}}_{F}$\, of \,$S_{F}^{\,-1}$\; exists, which is self-adjoint and commutes with \,$S_{F}$.\;Therefore, each \,$f \,\in\, X_{F}$\, can be written as
\[ f \;=\; S_{F}^{\,-\,\dfrac{1}{2}}\; S_{F}^{\,-\,\dfrac{1}{2}}\; \left(\,S_{F}\;f\,\right) \;=\; S_{F}^{\,-\,\dfrac{1}{2}}\; S_{F}\;\left(\,S_{F}^{\,-\,\dfrac{1}{2}}\;f \,\right)\]
\[\hspace{1cm}=\; S_{F}^{\,-\,\dfrac{1}{2}}\; \left(\,\sum\limits^{\infty}_{i \,=\, 1}\,\left <\,S_{F}^{\,-\,\dfrac{1}{2}}\,f \;,\; f_{\,i} \,|\, a_{\,2} \,,\, \cdots \,,\, a_{\,n}\;\right >\,f_{\,i}\,\right)\]
\[=\; \sum\limits^{\infty}_{i \,=\, 1}\,\left <\,S_{F}^{\,-\,\dfrac{1}{2}}\,f \,,\, f_{\,i} \,|\, a_{\,2} \,,\, \cdots \,,\, a_{\,n}\,\right >\,S_{F}^{\,-\,\dfrac{1}{2}}\;f_{\,i}\] 
\[=\; \sum\limits^{\infty}_{i \,=\, 1}\,\left <\,f \,,\, S_{F}^{\,-\,\dfrac{1}{2}}\,f_{\,i} \,|\, a_{\,2} \,,\, \cdots \,,\, a_{\,n}\,\right >\,S_{F}^{\,-\,\dfrac{1}{2}}\,f_{\,i}.\]
Now, for each \,$f \,\in\, X_{F}$, we have
\[\|\,f \,,\, a_{\,2} \,,\, \cdots \,,\, a_{\,n} \,\|^{\,2} \,=\, \left<\,f \,,\, f \,|\, a_{\,2} \,,\, \cdots \,,\, a_{\,n}\,\right>\]
\[=\, \left<\,\sum\limits^{\infty}_{i \,=\, 1}\,\left <\,f \,,\, S_{F}^{\,-\,\dfrac{1}{2}}\,f_{\,i} \;|\; a_{\,2} \,,\, \cdots \,,\, a_{\,n}\,\right >\,S_{F}^{\,-\,\dfrac{1}{2}}\,f_{\,i} \,,\, f \,|\, a_{\,2} \,,\, \cdots \,,\, a_{\,n}\,\right>\]
\[=\,\sum\limits^{\infty}_{i \,=\, 1}\,\left <\,f \,,\, S_{F}^{\,-\,\dfrac{1}{2}}\,f_{\,i} \;|\; a_{\,2} \,,\, \cdots \,,\, a_{\,n}\,\right >\,\left<\,S_{F}^{\,-\,\dfrac{1}{2}}\,f_{\,i} \,,\, f \,|\, a_{\,2} \,,\, \cdots \,,\, a_{\,n}\,\right> \]
\[ \,=\, \sum\limits^{\infty}_{i \,=\, 1}\, \left|\,\left <\,f \,,\, S_{F}^{\,-\,\dfrac{1}{2}}\,f_{\,i} \,|\, a_{\,2} \,,\, \cdots \,,\, a_{\,n}\,\right >\,\right|^{\,2}.\hspace{4.1cm}\]
Hence, \,$\left \{\,S_{F}^{\,-\,\dfrac{1}{2}}\,f_{\,i}\,\right \}^{\infty}_{i \,=\, 1}$\, is a normalized tight frame associated to \,$\left(\,a_{\,2},\, \cdots,\, a_{\,n}\,\right)$.
\end{proof}

\section{Frame and operator relative to $n$-Hilbert space}

\smallskip\hspace{.6 cm}In this section, we establish an image of frame associated to \,$\left(\,a_{\,2},\, \cdots,\, a_{\,n}\,\right)$\, under a bounded linear operator becomes a frame associated to \,$\left(\,a_{\,2},\, \cdots,\, a_{\,n}\,\right)$\, if and only if the bounded linear operator have to be invertible.\;In general, a Bessel sequence does not a frame in \,$n$-Hilbert space.\;We give some sufficient condition for Bessel sequence associated to \,$\left(\,a_{\,2},\, \cdots,\, a_{\,n}\,\right)$\, becomes frame associated to \,$\left(\,a_{\,2},\, \cdots,\, a_{\,n}\,\right)$\, in \,$n$-Hilbert space.

\begin{theorem}\label{th3}
Let \,$\{\,f_{\,i}\,\}_{i \,=\,1}^{\infty}$\; be a frame associated to \,$\left(\,a_{\,2},\, \cdots,\, a_{\,n}\,\right)$\; for \,$X$\, with bounds \,$A,\, B$\, and \,$S_{F}$\; be the corresponding frame operator and \,$U \,:\, X_{F} \,\to\, X_{F}$\; be a bounded linear operator.\;Then \,$\left\{\, U\,f_{\,i}\,\right\}_{i \,=\,1}^{\infty}$\, is a frame associated to \,$\left(\,a_{\,2},\, \cdots,\, a_{\,n}\,\right)$\, for \,$X$\; if and only if \,$U$\; is invertible on \,$X_{F}$.
\end{theorem}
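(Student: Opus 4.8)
The plan is to transfer the entire question to the Hilbert space $X_F$ and then run the classical operator/frame argument there. By Theorem \ref{th2}, a sequence is a frame associated to $(a_2,\ldots,a_n)$ with bounds $A,B$ precisely when it is a frame for the Hilbert space $X_F$ with the same bounds, where the inner product on $X_F$ is $\langle x,y\rangle_F=\langle x,y\,|\,a_2,\ldots,a_n\rangle$ and $\|x\|_F^2=\|x,a_2,\ldots,a_n\|^2$. Since this equivalence applies equally to $\{Uf_i\}_{i=1}^\infty$, it suffices to prove: for a frame $\{f_i\}$ of $X_F$ and a bounded $U\in\mathcal B(X_F)$, the family $\{Uf_i\}$ is a frame for $X_F$ if and only if $U$ is invertible. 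Throughout I would use the identity $\langle f,Uf_i\rangle_F=\langle U^*f,f_i\rangle_F$ and the resulting formula $\tilde S=US_FU^*$ for the frame operator $\tilde S$ of $\{Uf_i\}$, where $S_F$ is the bounded, positive, invertible frame operator of $\{f_i\}$ supplied by Theorem \ref{th2.2}.

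For the ``if'' direction I would argue by direct estimates. The upper bound is automatic from boundedness: for every $f\in X_F$,
\[\sum_{i=1}^\infty|\langle f,Uf_i\rangle_F|^2=\sum_{i=1}^\infty|\langle U^*f,f_i\rangle_F|^2\le B\,\|U^*f\|_F^2\le B\,\|U\|^2\,\|f\|_F^2.\]
For the lower bound I would use that invertibility of $U$ makes $U^*$ invertible with $(U^*)^{-1}=(U^{-1})^*$, so that $\|f\|_F=\|(U^*)^{-1}U^*f\|_F\le\|U^{-1}\|\,\|U^*f\|_F$; feeding $\|U^*f\|_F\ge\|U^{-1}\|^{-1}\|f\|_F$ into the lower frame inequality for $\{f_i\}$ gives
\[\sum_{i=1}^\infty|\langle f,Uf_i\rangle_F|^2\ge A\,\|U^*f\|_F^2\ge A\,\|U^{-1}\|^{-2}\,\|f\|_F^2.\]
Hence $\{Uf_i\}$ is a frame for $X_F$ with bounds $A\|U^{-1}\|^{-2}$ and $B\|U\|^2$, and Theorem \ref{th2} returns the conclusion in $X$.

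For the ``only if'' direction I would start from the lower frame bound $A'$ of $\{Uf_i\}$. For every $f\in X_F$,
\[A'\,\|f\|_F^2\le\sum_{i=1}^\infty|\langle f,Uf_i\rangle_F|^2=\sum_{i=1}^\infty|\langle U^*f,f_i\rangle_F|^2\le B\,\|U^*f\|_F^2,\]
so $U^*$ is bounded below; equivalently, by the closed range theorem, $U^*$ is injective with closed range and $U$ is surjective with closed range. Alternatively, by Theorem \ref{th2.2} the frame operator $\tilde S=US_FU^*$ of $\{Uf_i\}$ is invertible, which at once exhibits $S_FU^*\tilde S^{-1}$ as a bounded right inverse of $U$.

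The hard part is precisely this last step: both natural computations deliver only that $U$ is surjective (a bounded right inverse, i.e. $U^*$ bounded below), and the genuinely delicate point is to upgrade surjectivity to full invertibility, that is, to rule out a nontrivial kernel of $U$. This is where I would concentrate the effort, examining whether $\ker U\neq\{0\}$ can coexist with $\tilde S=US_FU^*$ being invertible by tracing a putative $0\neq g\in\ker U$ through the reconstruction identity for $\{Uf_i\}$ against the positivity and invertibility of $S_F$. I expect this injectivity claim to be the crux of the argument, and the place where any further structural hypothesis would have to enter.
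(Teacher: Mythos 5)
Your ``if'' direction is exactly the paper's argument: the identity $\left\langle f ,\, Uf_{\,i} \,|\, a_{\,2},\ldots,a_{\,n}\right\rangle \,=\, \left\langle U^{\,\ast}f ,\, f_{\,i} \,|\, a_{\,2},\ldots,a_{\,n}\right\rangle$, the upper bound $B\,\|U\|^{\,2}$, and the lower bound $A\,\|U^{\,-1}\|^{\,-2}$ obtained from $\|f\|_{F} \,\leq\, \|U^{\,-1}\|\,\|U^{\,\ast}f\|_{F}$; there is nothing to criticize there.

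The gap you flag in the ``only if'' direction is real, and it sits exactly where the paper's own proof breaks down. The paper computes, as you do, that the frame operator of $\{\,U f_{\,i}\,\}$ is $U S_{F} U^{\,\ast}$, invokes Theorem \ref{th2.2} to get that $U S_{F} U^{\,\ast}$ is invertible, and then asserts ``hence $U$ is invertible.'' That inference is a non sequitur: invertibility of $U S_{F} U^{\,\ast}$ exhibits $S_{F} U^{\,\ast}\left(U S_{F} U^{\,\ast}\right)^{-1}$ as a bounded right inverse of $U$, i.e. surjectivity (equivalently, $U^{\,\ast}$ bounded below, which is what your frame-inequality computation gives), but it says nothing about $\ker U$. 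Moreover, the injectivity you were trying to establish is genuinely false, so no argument can complete your plan: by Theorem \ref{th2} it suffices to look in the Hilbert space $X_{F}$, which can be an infinite-dimensional separable Hilbert space (e.g. take $X \,=\, l^{\,2}(\mathbb{N})$ with the standard $2$-inner product $\langle x,y \,|\, z\rangle \,=\, \langle x,y\rangle\,\|z\|^{\,2} \,-\, \langle x,z\rangle\langle z,y\rangle$, so that $X_{F} \,\cong\, \{z\}^{\perp}$). There take $\{f_{\,i}\}$ to be an orthonormal basis $\{e_{\,i}\}$ (so $S_{F} \,=\, I_{F}$) and $U$ the left shift, $U e_{\,1} \,=\, 0$, $U e_{\,i+1} \,=\, e_{\,i}$. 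Then $U S_{F} U^{\,\ast} \,=\, U U^{\,\ast} \,=\, I_{F}$ is invertible and $\{U e_{\,i}\} \,=\, \{0, e_{\,1}, e_{\,2},\ldots\}$ is a Parseval frame, yet $\ker U \,=\, \mathrm{span}\{e_{\,1}\} \,\neq\, \{0\}$. (If you object to the zero vector, send $e_{\,1}$ and $e_{\,2}$ both to $e_{\,1}$ and $e_{\,i}$ to $e_{\,i-1}$ for $i \,\geq\, 3$: the image is a frame with bounds $1$ and $2$, and $U(e_{\,1} \,-\, e_{\,2}) \,=\, 0$.) So the theorem is only correct with ``invertible'' weakened to ``surjective''; that weaker equivalence is precisely what your computation (and, despite its claim, the paper's) establishes, and the ``further structural hypothesis'' you anticipated — injectivity of $U$, or for instance normality, under which surjectivity does imply invertibility — is genuinely needed.
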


\begin{proof}
Suppose \,$U \,:\, X_{F} \,\to\, X_{F}$\; is invertible.\;Then for each \,$f \,\in\, X_{F}$,
\[\left\|\,f \,,\, a_{\,2} \,,\, \cdots \,,\, a_{\,n}\,\right\|^{\,2} \,=\, \left\|\,(\,U^{\,-\, 1}\,)^{\,\ast}\,U^{\,\ast}\,f \,,\, a_{\,2} \,,\, \cdots \,,\, a_{\,n}\,\right\|^{\,2}\hspace{1cm}\]
\begin{equation}\label{eq3}
\hspace{2.8cm}\leq\, \left\|\,U^{\,-\, 1}\,\right \|^{\,2}\,\left\|\,U^{\,\ast}\,f \,,\, a_{\,2} \,,\, \cdots \,,\, a_{\,n}\,\right \|^{\,2}.
\end{equation} 
Since \,$\{\,f_{\,i}\,\}^{\infty}_{i \,=1\, }\; \text{is a frame associated to}\;\left(\,a_{\,2},\, \cdots,\, a_{\,n}\,\right)$, for each \,$f \,\in\, X_{F}$, 
\[\sum\limits^{\infty}_{i \,=\, 1}\, \left |\,\left <\,f \,,\, U\,f_{\,i} \,|\, a_{\,2} \,,\, \cdots \,,\, a_{\,n}\,\right >\,\right |^{\,2} \,=\, \sum\limits^{\infty}_{i \,=\, 1}\, \left|\,\left <\,U^{\,\ast}\,f \,,\, f_{\,i} \,|\, a_{\,2} \,,\, \cdots \,,\, a_{\,n}\,\right >\,\right |^{\,2}\hspace{3cm}\]
\[\hspace{1.4cm}\geq\, A\, \left\|\,U^{\,\ast}\,f \,,\, a_{\,2} \,,\, \cdots \,,\, a_{\,n}\,\right \|^{\,2}\]
\[\hspace{4.5cm}\,\geq\, A\, \left\|\,U^{\,-\, 1}\,\right \|^{\,-\, 2}\, \left\|\,f \,,\, a_{\,2} \,,\, \cdots \,,\, a_{\,n}\,\right\|^{\,2}\; \;[\;\text{by (\ref{eq3})}\;].\]
On the other hand,
\[\sum\limits^{\infty}_{i \,=\, 1}\, \left |\,\left <\,f \,,\, U\,f_{\,i} \,|\, a_{\,2} \,,\, \cdots \,,\, a_{\,n}\,\right >\,\right |^{\,2} \,=\, \sum\limits^{\infty}_{i \,=\, 1}\, \left|\,\left <\,U^{\,\ast}\,f \,,\, f_{\,i} \,|\, a_{\,2} \,,\, \cdots \,,\, a_{\,n}\,\right >\,\right |^{\,2}\]
\[\hspace{5cm}\,\leq\, B\, \left\|\,U\,\right \|^{\,2}\, \left\|\,f \,,\, a_{\,2} \,,\, \cdots \,,\, a_{\,n}\,\right\|^{\,2}. \]
Hence, \,$\{\, U\,f_{\,i}\,\}_{i \,=\,1}^{\infty}$\; is a frame associated to \,$\left(\,a_{\,2},\, \cdots,\, a_{\,n}\,\right)$\, for \,$X$.\\

Conversely, suppose \,$\{\, U\,f_{\,i}\,\}_{i \,=\,1}^{\infty}$\; is a frame associated to \,$\left(\,a_{\,2},\, \cdots,\, a_{\,n}\,\right)$\, for \,$X$.\;Now, for each \,$f \,\in\, X_{F}$, we have
\[\sum\limits^{\infty}_{i \,=\, 1}\, \left <\,f,\, U\,f_{\,i} \,|\, a_{\,2},\, \cdots,\, a_{\,n}\,\right >\,U\,f_{\,i} \,=\, U\,\left(\,\sum\limits^{\infty}_{i \,=\, 1}\, \left <\,U^{\,\ast}\,f,\, f_{\,i} \,|\, a_{\,2},\, \cdots,\, a_{\,n}\,\right >\,\right) \,=\, U\,S_{F}\, U^{\,\ast}\,f.\]
This implies that \,$U\,S_{F}\, U^{\,\ast}$\; is the corresponding frame operator for \,$\{\, U\,f_{\,i}\,\}_{i \,=\,1}^{\infty}$.\;By Theorem (\ref{th2.2}), \,$U\,S_{F}\, U^{\,\ast}$\; is invertible and hence \,$U \,:\, X_{F} \,\to\, X_{F}$\; is invertible.
\end{proof}

\begin{theorem}
Let \,$\{\,f_{\,i}\,\}_{i \,=\,1}^{\infty}$\, be a frame associated to \,$\left(\,a_{\,2},\, \cdots,\, a_{\,n}\,\right)$\, and \,$U \,:\, X_{F} \,\to\, X_{F}$\, be a bounded linear operator.\;Then \,$\left\{\, f_{\,i} \,+\, U\,f_{\,i}\,\right\}_{i \,=\,1}^{\infty}$\, is a frame associated to \,$\left(\,a_{\,2},\, \cdots,\, a_{\,n}\,\right)$\, for \,$X$\, if and only if \,$I \,+\, U$\, is invertible on \,$X_{F}$.
\end{theorem}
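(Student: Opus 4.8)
The plan is to recognize that this statement is an immediate corollary of Theorem (\ref{th3}). The crucial observation is the algebraic identity
\[
f_{\,i} \,+\, U\,f_{\,i} \,=\, \left(\,I_{F} \,+\, U\,\right)\,f_{\,i}\; \;\forall\; i \,\in\, \mathbb{N},
\]
which follows from linearity and shows that the sequence \,$\left\{\,f_{\,i} \,+\, U\,f_{\,i}\,\right\}_{i \,=\, 1}^{\infty}$\, is nothing but the image \,$\left\{\,V\,f_{\,i}\,\right\}_{i \,=\, 1}^{\infty}$\, of the original frame under the single operator \,$V \,=\, I_{F} \,+\, U$.

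First I would check that \,$V \,=\, I_{F} \,+\, U$\, is a bounded linear operator on \,$X_{F}$.\;This is routine: \,$I_{F} \,\in\, \mathcal{B}\,(\,X_{F}\,)$\, and, by hypothesis, \,$U \,\in\, \mathcal{B}\,(\,X_{F}\,)$, and since \,$\mathcal{B}\,(\,X_{F}\,)$\, is a vector space, their sum lies in \,$\mathcal{B}\,(\,X_{F}\,)$\, as well.\;Thus \,$V$\, satisfies exactly the standing hypotheses imposed on the operator in Theorem (\ref{th3}).

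Next I would invoke Theorem (\ref{th3}) with \,$V \,=\, I_{F} \,+\, U$\, playing the role of the bounded linear operator there.\;Because \,$\left\{\,f_{\,i}\,\right\}_{i \,=\, 1}^{\infty}$\, is a frame associated to \,$\left(\,a_{\,2},\, \cdots,\, a_{\,n}\,\right)$\, for \,$X$\, and \,$V$\, is a bounded linear operator on \,$X_{F}$, that theorem gives that \,$\left\{\,V\,f_{\,i}\,\right\}_{i \,=\, 1}^{\infty}$\, is a frame associated to \,$\left(\,a_{\,2},\, \cdots,\, a_{\,n}\,\right)$\, for \,$X$\, if and only if \,$V$\, is invertible on \,$X_{F}$.\;Rewriting \,$V\,f_{\,i} \,=\, f_{\,i} \,+\, U\,f_{\,i}$\, and \,$V \,=\, I_{F} \,+\, U$, this is precisely the claimed equivalence, which finishes the argument.

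I do not expect any genuine obstacle here, since the whole content reduces to the identity \,$f_{\,i} \,+\, U\,f_{\,i} \,=\, \left(\,I_{F} \,+\, U\,\right)\,f_{\,i}$\, together with the membership \,$I_{F} \,+\, U \,\in\, \mathcal{B}\,(\,X_{F}\,)$, after which Theorem (\ref{th3}) applies verbatim.\;The only point requiring a little care is to identify the operator \,$I$\, written in the statement with the identity operator \,$I_{F}$\, on \,$X_{F}$, so that the phrase ``invertible on \,$X_{F}$'' is read consistently with the conclusion of Theorem (\ref{th3}); with this identification the proof is complete.
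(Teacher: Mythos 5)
Your proposal is correct and follows essentially the same route as the paper: the paper also rewrites $f_{\,i} \,+\, U\,f_{\,i}$ as $(\,I \,+\, U\,)\,f_{\,i}$ (via the identity $\left<\,f \,,\, f_{\,i} \,+\, U f_{\,i} \,|\, a_{\,2},\, \cdots,\, a_{\,n}\,\right> \,=\, \left<\,(\,I \,+\, U\,)^{\ast} f \,,\, f_{\,i} \,|\, a_{\,2},\, \cdots,\, a_{\,n}\,\right>$) and then invokes Theorem (\ref{th3}) with the bounded operator $I \,+\, U$. If anything, your version is stated more cleanly, since you apply Theorem (\ref{th3}) directly to $V \,=\, I_{F} \,+\, U$ rather than passing through the paper's intermediate (and somewhat loosely worded) equivalence with $\{\,f_{\,i}\,\}_{i\,=\,1}^{\infty}$ being a frame.
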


\begin{proof}
For each \,$f \,\in\, X_{F}$, we can write
\[ \sum\limits^{\infty}_{i \,=\, 1}\, \left|\,\left <\,f \,,\, f_{\,i} \,+\, U\, f_{\,i} \,|\, a_{\,2} \,,\, \cdots \,,\, a_{\,n}\,\right >\,\right|^{\,2} \,=\, \sum\limits^{\infty}_{i \,=\, 1}\, \left|\,\left <\,\left(\,I \,+\, U\,\right)^{\,\ast}\,f \,,\, f_{\,i} \,|\, a_{\,2} \,,\, \cdots \,,\, a_{\,n}\,\right >\,\right |^{\,2}.\]
Thus, \,$\left\{\,f_{\,i} \,+\, U\,f_{\,i}\,\right\}_{i \,=\,1}^{\infty}$\, is a frame associated to \,$\left(\,a_{\,2},\, \cdots,\, a_{\,n}\,\right)$\, if and only if \,$\{\,f_{\,i}\,\}_{i \,=\,1}^{\infty}$\; is a frame associated to \,$\left(\,a_{\,2},\, \cdots,\, a_{\,n}\,\right)$.\;By Theorem (\ref{th3}), \,$\left\{\,f_{\,i} \,+\, U\,f_{\,i}\,\right\}_{i \,=\,1}^{\infty}$\; is a frame associated to \,$\left(\,a_{\,2},\, \cdots,\, a_{\,n}\,\right)$\, if and only if \,$I \,+\, U$\; is invertible on \,$X_{F}$.
\end{proof}

\begin{remark}
Furthermore, for each \,$f \,\in\, X_{F}$, we have
\[\sum\limits^{\infty}_{i \,=\, 1}\, \left<\,f \,,\, f_{\,i} \,+\, U\,f_{\,i} \,|\, a_{\,2} \,,\, \cdots \,,\, a_{\,n}\,\right >\, \left(\,f_{\,i} \,+\, U\, f_{\,i}\,\right)\hspace{2.5cm}\]
\[\hspace{.6cm}\,=\, \sum\limits^{\infty}_{i \,=\, 1}\, \left <\,f \,,\, (\,I \,+\, U\,)\,f_{\,i} \,|\, a_{\,2} \,,\, \cdots \,,\, a_{\,n}\,\right >\, (\,I \,+\, U \,)\,f_{\,i}\] 
\[\hspace{.8cm}\,=\, (\,I \,+\, U\,)\, \sum\limits^{\infty}_{i \,=\, 1}\, \left<\,(\,I \,+\, U\,)^{\,\ast}\, f \,,\, f_{\,i} \,|\, a_{\,2} \,,\, \cdots \,,\, a_{\,n}\,\right >\,f_{\,i}\]
\[=\, (\,I \,+\, U\,)\,S_{F}\,(\,I \,+\, U \,)^{\,\ast}\,f. \hspace{3cm}\]
This implies that \,$(\,I \,+\, U\,)\,S_{F}\,(\,I \,+\, U \,)^{\,\ast}$\, is the corresponding frame operator for the frame \,$\left\{\, f_{\,i} \,+\, U\,f_{\,i}\,\right\}_{i \,=\,1}^{\infty}$. 
\end{remark}

\begin{theorem}
Let \,$\{\,f_{\,i}\,\}_{i \,=\,1}^{\infty}$\, and \,$\{\,g_{\,i}\,\}_{i \,=\,1}^{\infty}$\; be two Bessel sequences associated to \,$\left(\,a_{\,2},\, \cdots,\, a_{\,n}\,\right)$\, for \,$X$\, with pre frame operators \,$T_{F}$, \,$T_{F}^{\,\prime}$, respectively.\;Then for \,$L_{\,1},\, L_{\,2} \,\in\, \mathcal{B}\,\left(\,X_{F}\,\right)$, the sequence \,$\left\{\,L_{\,1}\,f_{\,i}\, \,+\, L_{\,2}\,g_{\,i}\,\right\}_{i \,=\,1}^{\infty}$\; is a frame associated to \,$\left(\,a_{\,2},\, \cdots,\, a_{\,n}\,\right)$\, for \,$X$\; if and only if \,$\left[\;T_{F}^{\,\ast}\, L_{\,1}^{\,\ast} \,+\, \left(\,T_{F}^{\,\prime}\,\right)^{\,\ast}\, L_{\,2}^{\,\ast}\, \right]$\; is an invertible on \,$X_{F}$.
\end{theorem}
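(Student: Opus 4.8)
The plan is to recognize the bracketed operator as the analysis operator of the combined sequence and then to read off the frame condition directly. First I would reduce everything to the genuine Hilbert space \,$X_{F}$\, via Theorem (\ref{th2}): a sequence is a frame (resp.\ Bessel sequence) associated to \,$\left(\,a_{\,2},\, \cdots,\, a_{\,n}\,\right)$\, in \,$X$\, exactly when it is a frame (resp.\ Bessel sequence) for \,$X_{F}$. Moreover, by Theorem (\ref{th2.1}) the given pre-frame operators \,$T_{F},\, T_{F}^{\,\prime}$\, are precisely the synthesis operators of \,$\left\{\,f_{\,i}\,\right\}$\, and \,$\left\{\,g_{\,i}\,\right\}$\, on \,$X_{F}$, with adjoints \,$T_{F}^{\,\ast}\,f \,=\, \left\{\,\left<\,f \,,\, f_{\,i} \,|\, a_{\,2} \,,\, \cdots \,,\, a_{\,n}\,\right>\,\right\}$\, and the analogous formula for \,$\left(\,T_{F}^{\,\prime}\,\right)^{\,\ast}$.

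Writing \,$h_{\,i} \,=\, L_{\,1}\,f_{\,i} \,+\, L_{\,2}\,g_{\,i}$, the first step is to note that \,$\left\{\,h_{\,i}\,\right\}$\, is automatically a Bessel sequence: since \,$L_{\,1},\, L_{\,2}$\, are bounded and \,$\left\{\,f_{\,i}\,\right\},\, \left\{\,g_{\,i}\,\right\}$\, are Bessel, for each \,$f \,\in\, X_{F}$\,
\[\sum\limits^{\infty}_{i \,=\, 1}\,\left|\,\left<\,f \,,\, h_{\,i} \,|\, a_{\,2} \,,\, \cdots \,,\, a_{\,n}\,\right>\,\right|^{\,2} \,\leq\, 2\sum\limits^{\infty}_{i \,=\, 1}\,\left|\,\left<\,L_{\,1}^{\,\ast}\,f \,,\, f_{\,i} \,|\, a_{\,2} \,,\, \cdots \,,\, a_{\,n}\,\right>\,\right|^{\,2} \,+\, 2\sum\limits^{\infty}_{i \,=\, 1}\,\left|\,\left<\,L_{\,2}^{\,\ast}\,f \,,\, g_{\,i} \,|\, a_{\,2} \,,\, \cdots \,,\, a_{\,n}\,\right>\,\right|^{\,2},\]
which is bounded by a constant times \,$\|\,f\,\|_{F}^{\,2}$. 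Hence the upper frame inequality can never fail, and the entire content of the statement lies in the lower bound.

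The key computation is the identification of the analysis operator of \,$\left\{\,h_{\,i}\,\right\}$. For \,$f \,\in\, X_{F}$, using that \,$L_{\,1},\, L_{\,2} \,\in\, \mathcal{B}\,(\,X_{F}\,)$\, admit adjoints with respect to \,$\left<\,\cdot \,,\, \cdot \,|\, a_{\,2} \,,\, \cdots \,,\, a_{\,n}\,\right>$,
\[\left\{\,\left<\,f \,,\, h_{\,i} \,|\, a_{\,2} \,,\, \cdots \,,\, a_{\,n}\,\right>\,\right\}_{i \,=\, 1}^{\,\infty} \,=\, \left\{\,\left<\,L_{\,1}^{\,\ast}\,f \,,\, f_{\,i} \,|\, \cdots\,\right> \,+\, \left<\,L_{\,2}^{\,\ast}\,f \,,\, g_{\,i} \,|\, \cdots\,\right>\,\right\}_{i \,=\, 1}^{\,\infty} \,=\, \left[\,T_{F}^{\,\ast}\,L_{\,1}^{\,\ast} \,+\, \left(\,T_{F}^{\,\prime}\,\right)^{\,\ast}\,L_{\,2}^{\,\ast}\,\right]\,f.\]
Thus the bracketed operator \,$U^{\,\ast} \,:=\, T_{F}^{\,\ast}\,L_{\,1}^{\,\ast} \,+\, \left(\,T_{F}^{\,\prime}\,\right)^{\,\ast}\,L_{\,2}^{\,\ast}$\, is exactly the analysis operator of \,$\left\{\,h_{\,i}\,\right\}$, and its adjoint \,$U \,=\, L_{\,1}\,T_{F} \,+\, L_{\,2}\,T_{F}^{\,\prime}$\, is the corresponding synthesis operator.

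With this identification the equivalence falls out. The lower frame inequality
\[A\,\|\,f \,,\, a_{\,2} \,,\, \cdots \,,\, a_{\,n}\,\|^{\,2} \,\leq\, \sum\limits^{\infty}_{i \,=\, 1}\,\left|\,\left<\,f \,,\, h_{\,i} \,|\, a_{\,2} \,,\, \cdots \,,\, a_{\,n}\,\right>\,\right|^{\,2} \,=\, \left\|\,U^{\,\ast}\,f\,\right\|_{l^{\,2}}^{\,2}\]
holding for every \,$f \,\in\, X_{F}$\, is precisely the assertion that \,$U^{\,\ast}$\, is bounded below, i.e.\ injective with closed range, which together with boundedness is the sense in which \,$U^{\,\ast}$\, is invertible on \,$X_{F}$; equivalently, by Theorem (\ref{th2.2}) the frame operator \,$S_{h} \,=\, U\,U^{\,\ast} \,=\, \left(\,L_{\,1}\,T_{F} \,+\, L_{\,2}\,T_{F}^{\,\prime}\,\right)\,U^{\,\ast}$\, is invertible. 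The converse direction merely rewrites the frame lower bound as this bounded-below estimate, and passing both directions through Theorem (\ref{th2}) transports the conclusion back from \,$X_{F}$\, to \,$X$. The step I expect to require the most care is pinning down the correct meaning of ``invertible'': since \,$U^{\,\ast}$\, maps into \,$l^{\,2}\,(\,\mathbb{N}\,)$\, it cannot be surjective for a genuinely redundant system, so the honest reading is bounded-below (left-invertible), equivalently invertibility of \,$U\,U^{\,\ast}$\, on \,$X_{F}$, and I would state this explicitly; the remaining work is the routine Bessel estimate above and the adjoint bookkeeping for \,$T_{F}^{\,\ast}$\, and \,$\left(\,T_{F}^{\,\prime}\,\right)^{\,\ast}$.
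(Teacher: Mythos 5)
Your proposal is correct, and its core computation is exactly the paper's: both identify the bracketed operator \,$T_{F}^{\,\ast}\,L_{\,1}^{\,\ast} \,+\, \left(\,T_{F}^{\,\prime}\,\right)^{\,\ast}\,L_{\,2}^{\,\ast}$\, as the analysis operator of \,$\left\{\,L_{\,1}\,f_{\,i} \,+\, L_{\,2}\,g_{\,i}\,\right\}_{i\,=\,1}^{\infty}$\, by pushing \,$L_{\,1},\,L_{\,2}$\, onto the other side of the \,$n$-inner product. Where you genuinely diverge is in how the equivalence ``frame \,$\Leftrightarrow$\, analysis operator invertible'' is justified. The paper simply cites Theorem (\ref{th3}) for this, which is a loose appeal: that theorem concerns operators \,$U \,:\, X_{F} \,\to\, X_{F}$\, acting on a given frame, not analysis operators \,$X_{F} \,\to\, l^{\,2}\,(\,\mathbb{N}\,)$, so it does not literally deliver the claimed characterization. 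You instead supply the missing argument directly: the routine estimate showing \,$\left\{\,L_{\,1}\,f_{\,i} \,+\, L_{\,2}\,g_{\,i}\,\right\}$\, is automatically Bessel, and the observation that the lower frame inequality is verbatim the statement that the analysis operator is bounded below, equivalently that the frame operator \,$U\,U^{\,\ast}$\, is invertible on \,$X_{F}$. Your insistence on reading ``invertible'' as bounded below (left-invertible) rather than bijective is also the right call, and is in fact necessary for the statement to be true: for a redundant system the analysis operator cannot map onto \,$l^{\,2}\,(\,\mathbb{N}\,)$, so under the literal bijective reading the theorem would fail. In short, your route is self-contained and repairs the paper's imprecision at the one point where its proof leans on an inapplicable citation, at the cost of a slightly longer argument.
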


\begin{proof}
Since \,$T_{F},\, T_{F}^{\,\prime}$\, are pre frame operators for \,$\{\,f_{\,i}\,\}_{i \,=\,1}^{\infty} \;\text{and}\; \{\,g_{\,i}\,\}_{i \,=\,1}^{\infty}$,  
\[T_{F}^{\,\ast}\,(\,f\,) \,=\, \left\{\,\left <\,f \,,\, f_{\,i}\, \,|\, a_{\,2} \,,\, \cdots \,,\, a_{\,n}\,\right>\,\right\}_{i \,=\,1}^{\infty}, \;\text{and}\] 
\[\hspace{1cm}\left(\,T_{F}^{\,\prime}\,\right)^{\,\ast}\,(\,f\,) \,=\, \left\{\,\left <\,f \,,\, g_{\,i}\, \,|\, a_{\,2} \,,\, \cdots \,,\, a_{\,n}\,\right>\,\right\}_{i \,=\,1}^{\infty}\; \;\forall\; f \,\in\, X_{F}.\]
By Theorem (\ref{th3}), \,$\left\{\,L_{\,1}\,f_{\,i}\, \,+\, L_{\,2}\,g_{\,i}\,\right\}_{i \,=\,1}^{\infty}$\, is a frame associated to \,$\left(\,a_{\,2},\, \cdots,\, a_{\,n}\,\right)$\, for \,$X$\; if and only if its analysis operator \,$T \,:\, X_{F} \,\to\, l^{\,2}\,(\,\mathbb{N}\,)$\; defined by
\[T\,(\,f\,) \,=\, \left \{\,\left <\,f \,,\, L_{\,1}\,f_{\,i}\, \,+\, L_{\,2}\,g_{\,i} \,|\, a_{\,2} \,,\, \cdots \,,\, a_{\,n}\,\right>\,\right \}_{i \,=\,1}^{\infty}\]\; is invertible on \,$X_{F}$.\;Now, for each \,$f \,\in\, X_{F}$,
\[T\,(\,f\,) \,=\, \left \{\,\left <\,f \,,\, L_{\,1}\,f_{\,i}\, \,+\, L_{\,2}\,g_{\,i} \,|\, a_{\,2} \,,\, \cdots \,,\, a_{\,n}\,\right>\,\right \}_{i \,=\,1}^{\infty}\hspace{3cm}\] 
\[\hspace{1.3cm}\,=\, \left\{\,\left <\,f \,,\, L_{\,1}\,f_{\,i}\, \,|\, a_{\,2} \,,\, \cdots \,,\, a_{\,n}\,\right> \,+\, \left <\,f \,,\, L_{\,2}\,g_{\,i}\, \,|\, a_{\,2} \,,\, \cdots \,,\, a_{\,n}\,\right>\right \}_{i \,=\,1}^{\infty}\]
\[\hspace{2.2cm} \,=\, \left\{\,\left <\,L_{\,1}^{\,\ast}\,f \,,\, f_{\,i} \,|\, a_{\,2} \,,\, \cdots \,,\, a_{\,n}\,\right>\,\right\}_{i \,=\,1}^{\infty} \,+\,  \left\{\,\left <\,L_{\,2}^{\,\ast}\,f \,,\, g_{\,i} \,|\, a_{\,2} \,,\, \cdots \,,\, a_{\,n}\,\right>\,\right\}_{i \,=\,1}^{\infty} \]
\[\hspace{1cm}\,=\, \left[\,T_{F}^{\,\ast}\,L_{\,1}^{\,\ast} \,+\, \left(\,T_{F}^{\,\prime}\,\right)^{\,\ast}\,L_{\,2}^{\,\ast}\,\;\right]\,f.\hspace{5.2cm}\]
Therefore, \,$\left\{\,L_{\,1}\,f_{\,i}\, \,+\, L_{\,2}\,g_{\,i}\,\right\}_{i \,=\,1}^{\infty}$\; is a frame associated to \,$\left(\,a_{\,2},\, \cdots,\, a_{\,n}\,\right)$\, for \,$X$\; if and only if \,$\left[\,T_{F}^{\,\ast}\,L_{\,1}^{\,\ast} \,+\, \left(\,T_{F}^{\,\prime}\,\right)^{\,\ast}\,L_{\,2}^{\,\ast}\,\;\right]$\; is an invertible on \,$X_{F}$.
\end{proof}

\begin{theorem}
A sequence \,$\left\{\,f_{\,i}\,\right\}^{\infty}_{i \,=\, 1}$\; in \,$X$\; is a frame associated to \,$\left(\,a_{\,2},\, \cdots,\, a_{\,n}\,\right)$\; if and only if \,$T_{F} \,:\, \left\{\,c_{i}\,\right\}_{i \,=\, 1}^{\,\infty} \,\to\, \sum\limits^{\infty}_{i \,=\, 1}\,c_{\,i}\,f_{\,i}$\; is well-defined mapping of \,$l^{\,2}\,(\,\mathbb{N}\,)$\, onto \,$X_{F}$. 
\end{theorem}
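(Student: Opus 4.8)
The plan is to reduce everything to the companion Hilbert space \,$X_{F}$\, and then invoke the standard characterisation of a frame through surjectivity of its synthesis operator.\;By Theorem (\ref{th2}), the sequence \,$\left\{\,f_{\,i}\,\right\}_{i \,=\, 1}^{\infty}$\, is a frame associated to \,$\left(\,a_{\,2},\, \cdots,\, a_{\,n}\,\right)$\, precisely when it is a frame for \,$X_{F}$, and the map \,$T_{F}$\, in the statement is exactly the pre-frame operator of Theorem (\ref{th2.1}).\;So it suffices to prove that \,$\left\{\,f_{\,i}\,\right\}_{i \,=\, 1}^{\infty}$\, is a frame for \,$X_{F}$\, if and only if \,$T_{F}$\, is a well-defined map of \,$l^{\,2}\,(\,\mathbb{N}\,)$\, onto \,$X_{F}$.

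For the forward implication, assume the sequence is a frame.\;It is in particular a Bessel sequence, so by Theorem (\ref{th2.1}) the operator \,$T_{F}$\, is well-defined and bounded, with adjoint \,$T_{F}^{\,\ast}\,f \,=\, \left\{\,\left<\,f \,,\, f_{\,i} \,|\, a_{\,2} \,,\, \cdots \,,\, a_{\,n}\,\right>\,\right\}_{i \,=\, 1}^{\infty}$.\;By Theorem (\ref{th2.2}) the frame operator \,$S_{F} \,=\, T_{F}\,T_{F}^{\,\ast}$\, is invertible on \,$X_{F}$.\;Hence for every \,$f \,\in\, X_{F}$\, we may write \,$f \,=\, S_{F}\,S_{F}^{\,-1}\,f \,=\, T_{F}\,\left(\,T_{F}^{\,\ast}\,S_{F}^{\,-1}\,f\,\right)$, and since \,$T_{F}^{\,\ast}\,S_{F}^{\,-1}\,f \,\in\, l^{\,2}\,(\,\mathbb{N}\,)$, this exhibits \,$f$\, as a value of \,$T_{F}$.\;Thus \,$T_{F}$\, maps onto \,$X_{F}$.

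For the converse, suppose \,$T_{F}$\, is well-defined on all of \,$l^{\,2}\,(\,\mathbb{N}\,)$\, and surjective.\;The first task --- and the step I expect to be the main obstacle --- is to upgrade mere well-definedness to boundedness.\;I would apply the uniform boundedness principle to the partial-sum operators \,$T_{N}\,\left\{\,c_{\,i}\,\right\} \,=\, \sum_{i \,=\, 1}^{N}\,c_{\,i}\,f_{\,i}$: each \,$T_{N}$\, is bounded, and for every fixed \,$\left\{\,c_{\,i}\,\right\} \,\in\, l^{\,2}\,(\,\mathbb{N}\,)$\, the sequence \,$\left\{\,T_{N}\,\left\{\,c_{\,i}\,\right\}\,\right\}_{N}$\, converges (because \,$T_{F}$\, is well-defined) and hence is norm-bounded in \,$X_{F}$.\;The principle then yields \,$\sup_{N}\,\|\,T_{N}\,\| \,<\, \infty$, whence \,$T_{F}$\, is bounded, and with it \,$T_{F}^{\,\ast}$.\;This already gives the upper estimate \,$\sum_{i}\,\left|\,\left<\,f \,,\, f_{\,i} \,|\, a_{\,2} \,,\, \cdots \,,\, a_{\,n}\,\right>\,\right|^{\,2} \,=\, \|\,T_{F}^{\,\ast}\,f\,\|_{l^{\,2}}^{\,2} \,\leq\, \|\,T_{F}\,\|^{\,2}\,\|\,f\,\|_{F}^{\,2}$, i.e.\ the Bessel bound \,$B \,=\, \|\,T_{F}\,\|^{\,2}$.

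For the lower bound, observe that since \,$T_{F}$\, is bounded and onto, its range \,$X_{F}$\, is closed, so Theorem (\ref{th1}) supplies a bounded pseudo-inverse \,$T_{F}^{\,\dagger}$\, with \,$T_{F}\,T_{F}^{\,\dagger}\,f \,=\, f$\, for all \,$f \,\in\, X_{F}$.\;Then for each \,$f \,\in\, X_{F}$, Cauchy--Schwarz in \,$l^{\,2}\,(\,\mathbb{N}\,)$\, gives
\[\|\,f\,\|_{F}^{\,2} \,=\, \left<\,T_{F}\,T_{F}^{\,\dagger}\,f \,,\, f\,\right>_{F} \,=\, \left<\,T_{F}^{\,\dagger}\,f \,,\, T_{F}^{\,\ast}\,f\,\right>_{l^{\,2}} \,\leq\, \|\,T_{F}^{\,\dagger}\,\|\,\|\,f\,\|_{F}\,\|\,T_{F}^{\,\ast}\,f\,\|_{l^{\,2}},\]
and rearranging yields \,$\|\,T_{F}^{\,\dagger}\,\|^{\,-2}\,\|\,f\,\|_{F}^{\,2} \,\leq\, \sum_{i}\,\left|\,\left<\,f \,,\, f_{\,i} \,|\, a_{\,2} \,,\, \cdots \,,\, a_{\,n}\,\right>\,\right|^{\,2}$, the lower frame bound \,$A \,=\, \|\,T_{F}^{\,\dagger}\,\|^{\,-2}$.\;Together the two bounds show that \,$\left\{\,f_{\,i}\,\right\}_{i \,=\, 1}^{\infty}$\, is a frame for \,$X_{F}$, and a final appeal to Theorem (\ref{th2}) returns the frame property associated to \,$\left(\,a_{\,2},\, \cdots,\, a_{\,n}\,\right)$\, in \,$X$.
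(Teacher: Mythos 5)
Your proposal is correct and takes essentially the same route as the paper: in the forward direction both arguments obtain surjectivity of \,$T_{F}$\, from the invertibility of the frame operator \,$S_{F} \,=\, T_{F}\,T_{F}^{\,\ast}$\, (Theorem (\ref{th2.2})), and in the converse both use the pseudo-inverse \,$T_{F}^{\,\dagger}$\, of Theorem (\ref{th1}) to extract the lower frame bound \,$\left\|\,T_{F}^{\,\dagger}\,\right\|^{\,-2}$. The only cosmetic differences are that you prove boundedness from well-definedness via the uniform boundedness principle, where the paper simply cites Theorem (\ref{th1.1}), and that you get the lower bound by Cauchy--Schwarz on \,$\left<\,T_{F}^{\,\dagger}\,f \,,\, T_{F}^{\,\ast}\,f\,\right>_{l^{\,2}}$\, rather than through the identity \,$\left(\,T_{F}^{\,\dagger}\,\right)^{\,\ast}\,T_{F}^{\,\ast} \,=\, I_{F}$.
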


\begin{proof}
First we suppose that \,$\left\{\,f_{\,i}\,\right\}^{\infty}_{i \,=\, 1}$\; is a frame associated to \,$\left(\,a_{\,2},\, \cdots,\, a_{\,n}\,\right)$. Then by Theorem (\ref{th2.1}), \,$T_{F}$\; is well-defined bounded linear operator from \,$l^{\,2}\,(\,\mathbb{N}\,)$\; onto \,$X_{F}$.\;Also by Theorem (\ref{th2.2}), the frame operator \,$S_{F} \,=\, T_{F}\, T^{\,\ast}_{F}$\; is surjective and hence \,$T_{F}$\; is surjective.\\

\text{Conversely}, suppose that \,$T_{F}$\; is well-defined mapping of \,$l^{\,2}\,(\,\mathbb{N}\,)$\; onto \,$X_{F}$.\;By Theorem (\ref{th1.1}), \,$T_{F}$\; is bounded and that \,$\{\,f_{\,i}\,\}^{\infty}_{i \,=\, 1}$\; is a Bessel sequence associated to \,$\left(\,a_{\,2},\, \cdots,\, a_{\,n}\,\right)$.\;So, \,$T_{F}^{\,\ast}\,f \,=\, \left\{\,\left<\,f \,,\, f_{\,i} \,|\, a_{\,2} \,,\, \cdots \,,\, a_{\,n} \,\right>\,\right\}_{i \,=\, 1}^{\,\infty}$.\;Since \,$T_{F}$\, is surjective, by Theorem (\ref{th1}), there exists an operator \,$T^{\,\dagger}_{F} \,:\, X_{F} \,\to\, l^{\,2}\,(\,\mathbb{N}\,)$\; such that \,$T_{F}\; T^{\,\dagger}_{F} \,=\, I_{F}$.\;This implies that \,$\left(\,T^{\,\dagger}_{F}\,\right)^{\,\ast}\,T^{\,\ast}_{F} \,=\, I_{F}$.\;Then for each \,$f \,\in\, X_{F}$, we get
\[\left\|\,f \,,\, a_{\,2} \,,\, \cdots \,,\, a_{\,n}\,\right\|^{\,2} \,\leq\, \left\|\,\left(\,T^{\,\dagger}_{F}\,\right)^{\,\ast}\,\right\|^{\,2}\; \left\|\,T^{\,\ast}_{F}\,f \,,\, a_{\,2} \,,\, \cdots \,,\, a_{\,n}\,\right\|^{\,2}\]
\[\hspace{3.8cm} \leq\, \left\|\, T^{\,\dagger}_{F}\,\right\|^{\,2}\, \sum\limits^{\infty}_{i \,=\, 1}\,|\,\left<\,f \,,\, f_{\,i} \,|\, a_{\,2} \,,\, \cdots \,,\, a_{\,n}\,\right>\,|^{\,2}\]
\[\Rightarrow\, \dfrac{1}{\left\|\, T^{\,\dagger}_{F}\,\right\|^{\,2}}\, \left\|\,f \,,\, a_{\,2} \,,\, \cdots \,,\, a_{\,n}\,\right\|^{\,2} \,\leq\, \sum\limits^{\infty}_{i \,=\, 1}\,|\,\left<\,f \,,\, f_{\,i} \,|\, a_{\,2} \,,\, \cdots \,,\, a_{\,n}\,\right>\,|^{\,2}.\]
Therefore \,$\{\,f_{\,i}\,\}^{\infty}_{i \,=\, 1}$\; is a frame associated to \,$\left(\,a_{\,2},\; \cdots,\; a_{\,n}\,\right)$.        
\end{proof}

\begin{theorem}
Let \,$\left\{\,f_{\,i}\,\right\}^{\infty}_{i \,=\, 1}$\; and \,$\left\{\,g_{\,i}\,\right\}^{\infty}_{i \,=\, 1}$\; be two Bessel sequences associated to \,$\left(\,a_{\,2},\, \cdots,\, a_{\,n}\,\right)$\, for \,$X$\, with bounds \,$C\; \;\text{and}\; \;D$, respectively.\;Suppose that \,$T_{F}$\; and \,$T_{F}^{\,\prime}$\; be their pre frame operators such that \,$T_{F}\,\left(\,T_{F}^{\,\prime}\,\right)^{\,\ast} \,=\, I_{F}$.\;Then \,$\left\{\,f_{\,i}\,\right\}^{\infty}_{i \,=\, 1}$\; and \,$\left\{\,g_{\,i}\,\right\}^{\infty}_{i \,=\, 1}$\; are frames associated to \,$\left(\,a_{\,2},\, \cdots,\, a_{\,n}\,\right)$\; for \,$X$. 
\end{theorem}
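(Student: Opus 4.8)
The plan is to carry out all of the work inside the Hilbert space $X_F$, where by Theorem (\ref{th2}) it suffices to exhibit a pair of frame bounds for each of the two sequences. The upper bounds are immediate: the hypotheses already assert that $\{f_i\}_{i=1}^\infty$ and $\{g_i\}_{i=1}^\infty$ are Bessel with bounds $C$ and $D$, so for every $f \in X_F$ one has $\sum_{i=1}^\infty |\langle f, f_i | a_2, \cdots, a_n\rangle|^2 \leq C\,\|f, a_2, \cdots, a_n\|^2$ and the analogous estimate for $g_i$ with $D$. The entire content therefore lies in producing \emph{lower} bounds, and the only new tool is the identity $T_F\,(T_F')^\ast = I_F$.

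First I would record the two analysis operators from Theorem (\ref{th2.1}), namely $T_F^\ast f = \{\langle f, f_i | a_2, \cdots, a_n\rangle\}_{i=1}^\infty$ and $(T_F')^\ast f = \{\langle f, g_i | a_2, \cdots, a_n\rangle\}_{i=1}^\infty$, and note that $\|T_F^\ast f\|_{l^2}^2$ and $\|(T_F')^\ast f\|_{l^2}^2$ are exactly the two frame sums in question. For any $f \in X_F$, rewriting the squared norm through the hypothesis gives
\[
\|f, a_2, \cdots, a_n\|^2 = \langle f, f | a_2, \cdots, a_n\rangle = \langle T_F\,(T_F')^\ast f, f | a_2, \cdots, a_n\rangle = \langle (T_F')^\ast f, T_F^\ast f\rangle_{l^2(\mathbb{N})},
\]
where the last equality moves $T_F$ across to its adjoint. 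Applying the Cauchy-Schwarz inequality in $l^2(\mathbb{N})$ and then the Bessel bound $D$ for $\{g_i\}$ to the factor $\|(T_F')^\ast f\|_{l^2}$ yields
\[
\|f, a_2, \cdots, a_n\|^2 \leq \|(T_F')^\ast f\|_{l^2}\,\|T_F^\ast f\|_{l^2} \leq \sqrt{D}\,\|f, a_2, \cdots, a_n\|\,\|T_F^\ast f\|_{l^2}.
\]
Cancelling one factor of $\|f, a_2, \cdots, a_n\|$ (the case $f = 0$ being trivial) and squaring produces $\frac{1}{D}\,\|f, a_2, \cdots, a_n\|^2 \leq \sum_{i=1}^\infty |\langle f, f_i | a_2, \cdots, a_n\rangle|^2$, so $\{f_i\}_{i=1}^\infty$ is a frame associated to $(a_2, \cdots, a_n)$ with bounds $D^{-1}$ and $C$.

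For $\{g_i\}_{i=1}^\infty$ I would run the mirror-image argument: taking adjoints in $T_F\,(T_F')^\ast = I_F$ gives $T_F'\,T_F^\ast = I_F$, and repeating the computation with the roles of the two sequences interchanged (now using the Bessel bound $C$ for $\{f_i\}$ to absorb $\|T_F^\ast f\|_{l^2}$) yields $\frac{1}{C}\,\|f, a_2, \cdots, a_n\|^2 \leq \sum_{i=1}^\infty |\langle f, g_i | a_2, \cdots, a_n\rangle|^2$, so $\{g_i\}_{i=1}^\infty$ is a frame with bounds $C^{-1}$ and $D$. The step I expect to require the most care is the bookkeeping in the displayed chain of inequalities: one must split the Cauchy-Schwarz product so that the \emph{opposite} sequence's Bessel bound is the factor used to cancel a copy of $\|f, a_2, \cdots, a_n\|$, since absorbing the wrong factor leaves an estimate that does not close. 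Everything else is a transcription of the adjoint relation $\langle T x, y\rangle = \langle x, T^\ast y\rangle$ into the $n$-inner-product notation, combined with Theorem (\ref{th2}) to pass back from $X_F$ to the $n$-Hilbert space $X$.
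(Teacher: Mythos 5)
Your proposal is correct and takes essentially the same route as the paper: write $\left\|\,f \,,\, a_{\,2} \,,\, \cdots \,,\, a_{\,n}\,\right\|^{\,2}$ using $T_{F}\,\left(\,T_{F}^{\,\prime}\,\right)^{\,\ast} \,=\, I_{F}$, pass the operator to its adjoint so the two analysis sequences appear, apply Cauchy--Schwarz, and absorb the factor coming from the \emph{opposite} sequence's Bessel bound to get the lower bounds $D^{\,-1}$ and $C^{\,-1}$. The only differences are cosmetic: the paper squares to work with $\left\|\,f \,,\, a_{\,2} \,,\, \cdots \,,\, a_{\,n}\,\right\|^{\,4}$ instead of cancelling a factor, and your version is actually cleaner in placing the Cauchy--Schwarz step explicitly in $l^{\,2}\,(\,\mathbb{N}\,)$, where those adjoint images genuinely live.
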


\begin{proof}
Since \,$T_{F}$\, and \,$T_{F}^{\,\prime}$\, are pre frame operators for \,$\left\{\,f_{\,i}\,\right\}^{\infty}_{i \,=\, 1}$\, and \,$\left\{\,g_{\,i}\,\right\}^{\infty}_{i \,=\, 1}$, respectively, for each \,$f \,\in\, X_{F}$, we have 
\[\left\|\,T_{F}^{\,\ast}\,f\,\right\|_{F}^{\,2} \,=\, \sum\limits^{\infty}_{i \,=\, 1}\,\left |\, \left <\,f \,,\, f_{\,i} \,|\, a_{\,2} \,,\, \cdots \,,\, a_{\,n}\,\right >\,\right |^{\,2}, \;\text{and}\]
\[\left\|\,\left(\,T_{F}^{\,\prime}\,\right)^{\,\ast}\,f\,\right\|_{F}^{\,2} \,=\, \sum\limits^{\infty}_{i \,=\, 1}\,\left |\, \left <\,f \,,\, g_{\,i} \,|\, a_{\,2} \,,\, \cdots \,,\, a_{\,n}\,\right >\,\right |^{\,2}.\] Now, for each \,$f \,\in\, X_{F}$, we have 
\[\|\,f \,,\, a_{\,2} \,,\, \cdots \,,\, a_{\,n}\,\|^{\,4} \,=\, \left[\,\left<\,f \,,\, f \,|\, a_{\,2} \,,\, \cdots \,,\, a_{\,n}\,\right>\,\right]^{\,2}\hspace{3cm}\]
\[\hspace{2.2cm}\,=\, \left[\,\left<\,T_{F}\,\left(\,T_{F}^{\,\prime}\,\right)^{\,\ast}\,f \,,\, f \,|\, a_{\,2} \,,\, \cdots \,,\, a_{\,n}\,\right>\,\right]^{\,2}\; \;[\;\because\; T_{F}\,\left(\,T_{F}^{\,\prime}\,\right)^{\,\ast} \,=\, I_{F}\;]\]
\[\,=\, \left[\,\left<\,\left(\,T_{F}^{\,\prime}\,\right)^{\,\ast}\,f \,,\, T_{F}^{\,\ast}\,f \,|\, a_{\,2} \,,\, \cdots \,,\, a_{\,n}\,\right>\,\right]^{\,2}\hspace{2cm}\]
\[\hspace{1.8cm} \,\leq\, \left\|\,T_{F}^{\,\ast}\,f\,\right\|_{F}^{\,2}\; \left\|\,\left(\,T_{F}^{\,\prime}\,\right)^{\,\ast}\,f\,\right\|_{F}^{\,2}\; \;[\;\text{by Cauchy-Schwarz inequality}\;]\]
\[\hspace{2cm}=\, \sum\limits^{\infty}_{i \,=\, 1}\,\left |\, \left <\,f \,,\, f_{\,i} \,|\, a_{\,2} \,,\, \cdots \,,\, a_{\,n}\,\right >\,\right |^{\,2}\; \sum\limits^{\infty}_{i \,=\, 1}\,\left |\, \left <\,f \,,\, g_{\,i} \,|\, a_{\,2} \,,\, \cdots \,,\, a_{\,n}\,\right >\,\right |^{\,2}\] 
\[\leq\, \sum\limits^{\infty}_{i \,=\, 1}\,\left |\, \left <\,f \,,\, f_{\,i} \,|\, a_{\,2} \,,\, \cdots \,,\, a_{\,n}\,\right >\,\right |^{\,2}\; D\; \|\,f \,,\,  a_{\,2} \,,\, \cdots \,,\, a_{\,n}\,\|^{\,2}\]
\[\Rightarrow\; \dfrac{1}{D}\; \|\,f \,,\,  a_{\,2} \,,\, \cdots \,,\, a_{\,n}\,\|^{\,2} \;\leq\;  \sum\limits^{\infty}_{i \,=\, 1}\,\left |\, \left <\,f \,,\, f_{\,i} \,|\, a_{\,2} \,,\, \cdots \,,\, a_{\,n}\,\right >\,\right |^{\,2}.\]Hence, \,$\left\{\,f_{\,i}\,\right\}^{\infty}_{i \,=\, 1}$\; is a frame associated to \,$\left(\,a_{\,2},\, \cdots,\, a_{\,n}\,\right)$\, in \,$X$.\;Similarly, it can be shown that \,$\left\{\,g_{\,i}\,\right\}^{\infty}_{i \,=\, 1}$\; is a frames associated to \,$\left(\,a_{\,2},\, \cdots,\, a_{\,n}\,\right)$\; with the lower bound \,$\dfrac{1}{C}$.     
\end{proof}

\end{document}